\documentclass[a4paper,12pt,twoside]{amsart}
\usepackage[english]{babel}
\usepackage{amssymb, amsmath, eucal}
\usepackage[all]{xy}
\usepackage{mathrsfs}
\usepackage[dvips]{graphics}

\textwidth15.5cm
\textheight22.5cm
\voffset=-13mm
\newtheorem{The}{Theorem}[section]
\newtheorem{Lem}[The]{Lemma}
\newtheorem{Prop}[The]{Proposition}
\newtheorem{Cor}[The]{Corollary}
\newtheorem{Rem}[The]{Remark}

\newtheorem{Conj}[The]{Conjecture}

\newtheorem{Def}[The]{Definition}

\newtheorem*{ackn}{Acknowledgements}

\newcommand{\C}{\mathbb{C}}
\newcommand{\R}{\mathbb{R}}

\newcommand{\U}{\underline{u}}

%....................................................................................
\begin{document}
%tieu de va ten tg%
 \title[Parabolic complex Monge-Amp\`ere equations]{Degenerate complex Monge-Amp\`ere flows on  strictly pseudoconvex domains}

\setcounter{tocdepth}{1}

  \author{Do Hoang Son } 

\email{hoangson.do.vn@gmail.com }
 \date{\today}
%................................................................................%

%Abstract%

\begin{abstract}
  We study the equation $\dot{u}=\log\det (u_{\alpha\bar{\beta}})+f(t,z,u)$ in  domains
  of $\C^n$. This equation has a close connection with the K\"ahler-Ricci flow. In this paper, we consider the case of the boundary conditions are smooth and the initial conditions are bounded.
\end{abstract}

\maketitle

\tableofcontents
\newpage

%introduction%
\section*{Introduction}
On K\"ahler manifolds, a K\"ahler-Ricci flow is an equation
\begin{equation}
\frac{\partial}{\partial t}\omega=-Ric(\omega),
\end{equation}
which starts from a K\"ahler metric. 
Here, $Ric(\omega)$ is the form associated  to the Ricci curvature of $\omega$, i.e., if 
$$
\omega=\frac{\sqrt{-1}}{2\pi}g_{i\bar{j}}dz^i\wedge d\overline{z^j},
$$ 
then
$$
Ric(\omega)=-\frac{\sqrt{-1}}{2\pi}(\partial_i\partial_{\overline{j}}\log\det g)
dz^i\wedge d\bar{z^j}.
$$
This flow was become a poweful tool of geometry. The theory of K\"ahler-Ricci flow
is well developed in the case of compact K\"ahler manifolds, see e.g. \cite{Cao85},
\cite{PS05}, \cite{ST07}, \cite{Zha09}, \cite{Tos10},
\cite{GZ13}, \cite{BG13}. It can be seen as the parabolic problem associated to an ``elliptic" problem
which would be the complex Monge-Amp\`ere equation. 

Monge-Amp\`ere equations and their generalizations have long been studied
in strictly pseudoconvex domains of $\mathbb C^n$, see for instance \cite{CKNS85}. 
This raises a natural
question: what is the behavior of the corresponding parabolic equation in the case of $\C^n$?

Let $\Omega$ be a bounded smooth strictly pseudoconvex domain of $\C^n$, i.e., there exists a smooth
strictly plurisubharmonic function $\rho$ defined on a bounded neighbourhood of $\bar{\Omega}$ such that 
$\Omega=\{\rho<0\}$ and $d\rho|_{\partial\Omega}\neq 0$.

 Let $T\in (0,\infty]$. We consider the equation
\begin{equation}\label{KRF}
\begin{cases}
\begin{array}{ll}
\dot{u}=\log\det (u_{\alpha\bar{\beta}})+f(t,z,u)\;\;\;&\mbox{on}\;\Omega\times (0,T),\\
u=\varphi&\mbox{on}\;\partial\Omega\times [0,T),\\
u=u_0&\mbox{on}\;\bar{\Omega}\times\{ 0\},\\
\end{array}
\end{cases}
\end{equation}
where $\dot{u}=\frac{\partial u}{\partial t}$,
 $u_{\alpha\bar{\beta}}=\frac{\partial^2 u}{\partial z_{\alpha}\partial\bar{z}_{\beta}}$,
 $u_0$ is a plurisubharmonic function in a neighbourhood of $\Omega$
 and $f$ is smooth in $[0,T)\times\bar{\Omega}\times\R$ and non increasing in the last variable.
 
 This equation has a close connection with the K\"ahler-Ricci flow. 
 There are some previous results. 
 If $u_0$ is continuous and $\varphi$ does not depend on the last variable,
  then \eqref{KRF} 
 admits a unique viscosity solution  \cite{EGZ14}. If $u_0$ is a smooth strictly 
 plurisubharmonic function in $\bar{\Omega}$,  $\varphi$ is smooth in $\bar{\Omega}\times 
 [0,T)$ and the compatibility conditions are satisfied, then 
  \eqref{KRF} admits a unique solution 
 $u\in C^{\infty}(\Omega\times (0,T))\cap C^{2;1}(\bar{\Omega}\times [0,T))$  
 \cite{HL10}; we state their result in detail as Theorem \ref{houli} in Section \ref{prelim}. 
 
  In this paper, we study the case where $\varphi$ is smooth and $u_0$ is merely bounded. 
The main result is the following:
\begin{The}
\label{app2}
 Let $\Omega$ be a bounded smooth strictly pseudoconvex 
 domain of $\C^n$ and $T\in (0,\infty ]$.
 Let $u_0$ be a bounded plurisubharmonic function defined on a neighbourhood 
 $\tilde{\Omega}$ of
 $\overline \Omega$. 
 Assume that $\varphi\in C^{\infty}(\bar{\Omega}\times [0,T))$ and $f
 \in C^{\infty}([0,T)\times \bar{\Omega}\times \R )$  satisfying
 \begin{itemize}
 \item[(i)] $f_u\leq 0.$
 \item[(ii)] $\varphi(z,0)=u_0(z)$ for $z\in\partial\Omega$.
 \end{itemize}
 Then there exists a unique function $u\in C^{\infty}(\bar{\Omega}\times (0,T))$ such that
 \begin{equation}
 \label{psh.dlchinh}
 u(.,t) \mbox{ is a strictly plurisubharmonic function on } \Omega \;\;
 \mbox{ for all } t\in (0,T),
 \end{equation}
 \begin{equation}\label{KRF.dlchinh}
\dot{u}= \log\det (u_{\alpha\bar{\beta}})+f(t,z,u) \mbox{ on } \Omega\times (0,T),
\end{equation}
\begin{equation}\label{bien.dlchinh}
u=\varphi \mbox{ on } \partial\Omega\times (0,T),
\end{equation}
\begin{equation}\label{initial.dlchinh}
\lim\limits_{t\rightarrow 0} u(z,t)= u_0(z)\;\;\forall z\in\bar{\Omega}.
 \end{equation}
 Moreover, $u\in L^{\infty}(\bar{\Omega}\times [0,T'))$ for any $0<T'<T$, and
 $u(.,t)$  also converges to $u_0$ in capacity when $t\rightarrow 0$.\\
  If $u_0\in C(\tilde{\Omega})$ then $u\in C(\bar{\Omega}\times [0,T))$.
 \end{The}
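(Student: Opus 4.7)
The plan is to construct $u$ as the monotone limit of Hou-Li smooth solutions corresponding to a decreasing regularization of $u_0$, and to exploit the smoothing effect of the parabolic flow to obtain regularity up to the boundary for $t > 0$.

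\textbf{Step 1: Approximation.} I would take a decreasing sequence $(u_{0,j})$ of smooth strictly plurisubharmonic functions on a neighborhood of $\bar\Omega$ with $u_{0,j}\searrow u_0$ (standard convolution together with an $\epsilon_j\rho$-term to force strict plurisubharmonicity). These generically fail the boundary compatibility, so I replace $\varphi$ by
\[
\varphi_j(z,t) := \varphi(z,t) + \chi(t)\bigl(u_{0,j}(z) - \varphi(z,0)\bigr),
\]
with $\chi \in C^\infty([0,T))$, $\chi(0)=1$ and $\chi\equiv 0$ for $t\geq\delta$. Then $\varphi_j(\cdot,0)=u_{0,j}$ and Theorem \ref{houli} applies, producing smooth solutions $u_j\in C^\infty(\Omega\times(0,T))\cap C^{2;1}(\bar\Omega\times[0,T))$. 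Monotonicity $u_{j+1}\leq u_j$ follows from the parabolic comparison principle ($u_{0,j+1}\leq u_{0,j}$, $\varphi_{j+1}\leq\varphi_j$, $f_u\leq 0$), and explicit barriers $A\rho(z)+\beta(t)-M \leq u_j \leq \sup u_{0,1}+\alpha(t)$, with $A$ large and $\alpha,\beta$ solving ODEs driven by $f$, yield an $L^\infty$ bound uniform in $j$ on each slab $\bar\Omega\times[0,T']$.

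\textbf{Step 2: Uniform regularity for $t > 0$.} The main obstacle of the proof is to establish $C^k$ estimates for $u_j$ on $\bar\Omega\times[\tau,T']$ uniform in $j$, depending on $u_{0,j}$ only through its $L^\infty$ norm. The classical Hou-Li scheme uses $\|u_{0,j}\|_{C^2}$, which is not controlled here. The fix is to weight the auxiliary quantities by factors vanishing at $t=0$: applying the maximum principle to $t\dot u_j - A(u_j - \U)$ with $\U$ a fixed smooth subsolution produces an upper bound on $\dot u_j$ for $t\geq\tau$ independent of $\|u_{0,j}\|_{C^2}$; a dual argument supplies a lower bound, hence uniform control of $\log\det(u_{j,\alpha\bar\beta})$. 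A parabolic Yau-type Laplacian estimate applied to $t(\Delta u_j)e^{-Ku_j}$, with boundary barriers built from $\rho$ and exploiting the strict pseudoconvexity of $\Omega$ and the smoothness of $\varphi_j$, then yields a uniform $C^2$ bound on $\bar\Omega\times[\tau,T']$. Evans-Krylov in its interior and boundary versions promotes this to $C^{2,\alpha}$, and parabolic Schauder bootstraps to $C^k$ for every $k$.

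\textbf{Step 3: Limit and initial condition.} Arzel\`a-Ascoli gives $u_j\to u:=\lim_j u_j$ in $C^\infty_{\mathrm{loc}}(\bar\Omega\times(0,T))$, so $u\in C^\infty(\bar\Omega\times(0,T))$, is strictly plurisubharmonic in $z$ (by the Laplacian lower bound), and solves (\ref{KRF.dlchinh})-(\ref{bien.dlchinh}); uniqueness follows from parabolic comparison on $\bar\Omega\times[\tau,T']$ for every $\tau>0$. For the initial condition, monotonicity $u\leq u_j$ together with $u_j(\cdot,t)\to u_{0,j}$ as $t\to 0$ yields $\limsup_{t\to 0} u(z,t)\leq u_0(z)$, while the matching lower bound comes from a subsolution argument using smooth plurisubharmonic minorants of $u_0$ on relatively compact subdomains and parabolic comparison. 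Capacity convergence then follows from the Bedford-Taylor principle applied to $u_{0,j}\searrow u_0$, combined with the uniform convergence $u_j(\cdot,t)\to u_{0,j}$ as $t\to 0$. If $u_0\in C(\tilde\Omega)$, Richberg's theorem supplies a uniformly increasing smooth strictly plurisubharmonic approximation $v_k\nearrow u_0$; the parallel construction (with boundary data correspondingly modified) yields smooth solutions $v_k\leq u$ that also converge to $u$ uniformly at $t=0$, giving $u\in C(\bar\Omega\times[0,T))$.
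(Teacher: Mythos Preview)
Your overall strategy matches the paper's: approximate $u_0$ by smooth strictly plurisubharmonic $u_{0,j}$, invoke Hou--Li, derive uniform $C^k$ estimates on $\bar\Omega\times[\tau,T']$ via time-weighted quantities, and pass to the limit. Two points need repair.

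\textbf{Compatibility in Step~1.} Your modified boundary data $\varphi_j(z,t)=\varphi(z,t)+\chi(t)\bigl(u_{0,j}(z)-\varphi(z,0)\bigr)$ does achieve $\varphi_j(\cdot,0)=u_{0,j}$, but Theorem~\ref{houli} also requires the corner compatibility
\[
\dot\varphi_j(z,0)=\log\det(u_{0,j})_{\alpha\bar\beta}(z)+f(0,z,u_{0,j}(z)),\qquad z\in\partial\Omega.
\]
With your $\varphi_j$ one has $\dot\varphi_j(z,0)=\dot\varphi(z,0)+\chi'(0)\bigl(u_{0,j}(z)-\varphi(z,0)\bigr)$, and no choice of the scalar $\chi'(0)$ can match the right-hand side, which depends on $z$ through $\log\det(u_{0,j})_{\alpha\bar\beta}$. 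The paper handles this by building $\varphi_m$ with an additional term $t\,g_m(z)\,\zeta(t/\epsilon_m)$ where $g_m$ is exactly the needed initial time-derivative; without such a correction Hou--Li cannot be invoked.

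\textbf{Uniqueness in Step~3.} You assert that uniqueness ``follows from parabolic comparison on $\bar\Omega\times[\tau,T']$ for every $\tau>0$.'' But comparison on that slab requires an inequality between the two candidate solutions at $t=\tau$, and you have none: both solutions merely converge \emph{pointwise} to $u_0$ as $t\to 0$, and neither is continuous up to $t=0$ in general. The paper's uniqueness argument is genuinely more delicate: given two solutions $u,v$, it first shows (via a subsolution) that $v(z,t)\geq u_0(z)-\epsilon-At$; then, since the $u(\cdot,t)$ are plurisubharmonic, it invokes Hartogs' lemma to find, for each small $\tfrac1m$, some $k_m>m$ with $v(\cdot,\tfrac1m)\geq u(\cdot,\tfrac1{k_m})-3\epsilon$ on all of $\Omega$ (treating a boundary collar and the interior separately); only then does comparison on the shifted slabs close the argument. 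The Hartogs step is essential and cannot be replaced by a naive comparison.

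A smaller omission: your Step~2 boundary $C^2$ estimate presupposes a uniform gradient bound on $\bar\Omega\times[\tau,T']$, which the paper obtains separately (a barrier for $|\nabla u|$ on $\partial\Omega$, then a B\l ocki-type interior estimate with a $\log(t-\tfrac2m)$ weight). The second-order boundary barriers in the paper use $|\nabla u_j|$ explicitly, so this intermediate step should be acknowledged.
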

 Here, we say that $u(.,t)$  converges to $u_0$ in capacity if the convergence is uniform outside sets of arbitrarily small capacity.
 
 This improves the main result of \cite{HL10} in two directions: we do not need 
 smoothness of the initial data, and still have continuity when $t\to 0$;
  and we obtain the maximal possible regularity
 when $z$ tends to $\partial \Omega$, for fixed $t>0$. 
 
 Some techniques used in this paper are from the corresponding result 
 in the case of compact K\"ahler manifolds. 
 On a compact K\"ahler manifold,  results have been obtained in the 
 more general case where 
 $u_0$ has zero or even positive Lelong numbers. 
 We refer the reader to \cite{GZ13} and \cite{DL14} for the details.
 \begin{ackn}
 I am deeply grateful to Pascal Thomas and Vincent Guedj for many
inspiring discussions on the subject and encouragement me to write
down this paper. It is improved significantly thanks to their thorough reading and
 editing.  I also would like to thank Lu Hoang Chinh for very useful discussions about
 Proposition \ref{gra 2}. 
 \end{ackn}
\section{Strategy of the proof}

We fix some notation. We say that $u\in C^{2;1}(\bar{\Omega}\times [0,T))$ if 
 $u(.,t)\in C^2(\bar{\Omega})$ for any $t\in [0,T)$, $u(z,.)\in C^1([0,T))$ for any
 $z\in\bar{\Omega}$ and $\dot{u},u_{s_js_k}\in C(\bar{\Omega}\times [0,T))$ for
 $s_j,s_k\in\{x_1,y_1,...x_n,y_n\}$.\\

In order to prove Theorem \ref{app2}, we use an approximation process and
we first will need to prove  the following a priori estimates theorem:
\begin{The}
\label{Main} 
Let $\Omega$ be a bounded smooth strictly pseudoconvex domain of $\C^n$ and $T>0$. Let 
$\varphi\in C^{\infty}(\bar{\Omega}\times [0,T))$ and $f\in C^{\infty}([0,T)\times 
\bar{\Omega}\times \R )$ and let $u\in C^{\infty}(\Omega\times (0,T))\cap
C^{2;1}(\bar{\Omega}\times[0,T))$, strictly plurisubharmonic with respect to
$z$, be a solution of the equation
\begin{equation}\label{eqmain}
\dot{u}=\log\det (u_{\alpha\bar{\beta}})+f(t,z,u)\;\;\;\mbox{on}\;\; \Omega\times (0,T) .
\end{equation}
   Assume that 
\begin{equation}\label{uphi}
u|_{\partial\Omega\times [0,T)}=\varphi|_{\partial\Omega\times [0,T)},
\end{equation}
\begin{equation}\label{cu}
\sup |u(z,0)|\leq C_u,
\end{equation}
\begin{equation}\label{fu}
f_u(t,z,u)\leq 0\;\;\;\forall (t,z,u)\in (0,T)\times \Omega\times \R,
\end{equation}
\begin{equation}\label{cf}
\|f\|_{C^2((0,T)\times\Omega \times \R)}\leq C_f,
\end{equation}
\begin{equation}\label{cphi}
\|\varphi\|_{C^4(\Omega\times (0,T))}\leq C_{\varphi}.
\end{equation}
Then   there exists $M_0=M_0(\Omega, T, C_u, C_{\varphi},C_f)$ and for any $0<\epsilon<T$ there exists $C=C(\Omega, \epsilon, T, C_u, C_{\varphi},C_f)$ such that
\begin{center}
$|u|\leq M_0\;\;$ on $\;\; \Omega\times (0,T),$\\
$|\nabla u|+|\dot{u}|+\Delta u\leq C\;\;$ on $\;\;\Omega\times (\epsilon,T).$
\end{center}
\end{The}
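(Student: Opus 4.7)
The plan is to derive the a priori estimates in the classical order $C^0 \to \dot u \to \nabla u \to \Delta u$, at each step applying the parabolic maximum principle to a carefully chosen test quantity. The dependence on $\epsilon$ in the last three bounds reflects the fact that the initial data is only bounded, so all derivatives of $u$ at $t=0$ are a priori uncontrolled; the auxiliary functions I use will therefore carry a factor of $t$ or $t-\epsilon/2$ whose sole role is to kill the contribution of the initial time.

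For the $C^0$ bound, the upper estimate is immediate: since $u(\cdot,t)$ is strictly plurisubharmonic for each fixed $t$, the maximum principle for subharmonic functions gives $u(z,t)\leq \max_{\partial\Omega}\varphi(\cdot,t)\leq C_\varphi$. For the lower estimate I would compare $u$ with the subsolution
\[
\underline u(z,t) = \varphi(z,t) + A\rho(z) - Bt - D,
\]
where $A$ is taken large enough that $\underline u$ is strictly plurisubharmonic, $D\geq C_\varphi+C_u$ so that $\underline u\leq u$ on the parabolic boundary, and $B$ large enough that
\[
\dot{\underline u}\leq \log\det(\underline u_{\alpha\bar\beta}) + f(t,z,\underline u)
\]
on $\Omega\times(0,T)$ (using $\log\det(\varphi_{\alpha\bar\beta}+A\rho_{\alpha\bar\beta})\geq n\log A - O(1)$ together with the $C^2$ bound on $f$). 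Parabolic comparison then yields $u\geq \underline u\geq -M_0$.

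For the bound on $\dot u$ I would differentiate the equation in $t$ and obtain
\[
(\partial_t - u^{\alpha\bar\beta}\partial_{\alpha\bar\beta})\dot u = f_t + f_u\dot u,
\]
a linear parabolic equation with sign-favourable zero-order coefficient ($f_u\leq 0$). Because $\dot u(\cdot,0)$ is not controlled, a direct maximum principle on $\dot u$ itself must fail; instead, on $\Omega\times[\epsilon/2,T')$ I would apply the maximum principle to the auxiliary quantity
\[
H_\pm = (t-\epsilon/2)(\pm\dot u) - K u,
\]
using the identity $(\partial_t - u^{\alpha\bar\beta}\partial_{\alpha\bar\beta})u = \dot u - n$. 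The factor $(t-\epsilon/2)$ suppresses the contribution from the initial time, while the $-Ku$ term (for $K$ large) provides the coercivity needed to reduce the interior estimate on $H_\pm$ to its boundary values, where $\dot u = \dot\varphi$ is smooth, and to the already known $C^0$ bound. This yields $|\dot u|\leq C(\epsilon)$ on $\Omega\times[\epsilon,T)$; equivalently, $\det u_{\alpha\bar\beta} = e^{\dot u - f}$ is now two-sided bounded.

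The boundary gradient estimate then follows by pinching $u$ between $\varphi+K\rho$ and $\varphi-K\rho$ for $K$ large, which controls the normal derivative on $\partial\Omega$ while tangential derivatives are read off from $\varphi$; together with the bound on $\dot u$, an interior Blocki-type maximum principle applied to $e^{-Au}|\nabla u|^2$ (with $A$ large) closes the $C^1$ estimate on $\Omega\times[\epsilon,T)$. The $C^2$ estimate splits into a boundary piece, handled by the standard pseudoconvex barrier argument combined with the two-sided bound on $\det u_{\alpha\bar\beta}$, and an interior piece provided by a parabolic Yau computation applied to $\log(n+\Delta u) - Au$ for $A$ large. The main obstacle is Step~2: the combination $(t-\epsilon/2)\dot u - Ku$ is precisely engineered so that its parabolic equation carries no residual contribution from $t=0$, and once $\dot u$ is tamed the remaining problem reduces at each frozen time to a classical elliptic Monge-Amp\`ere equation with two-sided bounded right-hand side, to which the standard barrier and Yau techniques apply.
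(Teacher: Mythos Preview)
Your overall architecture matches the paper's: $C^0$ by subsolution comparison, then $\dot u$, boundary and interior gradient (Blocki), boundary $C^2$ barriers, and an interior Laplacian bound (Yau). Two specific steps need repair.

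First, in the $\dot u$ estimate ``$K$ large'' is wrong. With the linearized operator $L=\partial_t-u^{\alpha\bar\beta}\partial_{\alpha\bar\beta}-f_u$ one computes
\[
L\bigl((t-\tfrac{\epsilon}{2})\dot u-Ku\bigr)=(1-K)\,\dot u+(t-\tfrac{\epsilon}{2})f_t+Kn+Kf_u\,u,
\]
so only $K=1$ removes the uncontrolled $\dot u$ on the right-hand side; the paper uses exactly $t\dot u-u$ on the full strip $\Omega\times(0,T)$, where at $t=0$ the quantity is just $-u_0$ and hence already bounded by $C_u$. If instead you take $K>1$ with the bare operator $\partial_t-u^{\alpha\bar\beta}\partial_{\alpha\bar\beta}$, the lower bound fails: the coefficient of $\dot u$ in the equation for $H_-$ is $-(1+K)-(t-\tfrac{\epsilon}{2})f_u<0$, so an interior maximum of $H_-$ with $\dot u\to-\infty$ is not excluded.

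Second, the time cutoff you promise at the outset must actually be written into the interior gradient and Laplacian test functions. The paper uses $\log|\nabla u|^2+\gamma(u)+n\log(t-\tfrac{2}{m})+\eta|z|^2$ (with $\gamma(u)=Au-Bu^2$ quadratic, not the linear $-Au$ implicit in your $e^{-Au}|\nabla u|^2$) for the gradient and $(t-\epsilon)\log\Delta u+A_1|z|^2-A_2t$ for the Laplacian. Without the factors $\log(t-\tfrac{2}{m})$ and $(t-\epsilon)$, the interior maxima of your quantities could occur on the initial slice, where neither $|\nabla u|$ nor $\Delta u$ is controlled.
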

\begin{Rem}
In the theorem above, we denote
$$
\|\varphi\|_{C^k(\Omega\times (0,T))}
=\sum\limits_{|j|+2l\leq k}\sup\limits_{\Omega\times (0,T)}
|D^j_s D^l_t\varphi|,
$$
$$
\|f\|_{C^k((0,T)\times\Omega\times\R))}
=\sum\limits_{j_1+|j_2|+j_3\leq k}\sup |D_t^{j_1}D_s^{j_2}D_u^{j_3}f|,
$$
where $s=(s_1,...,s_{2n})=(x_1,y_1,...,x_n,y_n)$.
\end{Rem}
For the proof of Theorem \ref{app2}, the strategy is as follows.

\begin{itemize}
\item[+] Construct the solutions 
$u_m\in C^{\infty}(\Omega\times (0,T))\cap C^{2;1}(\bar{\Omega}\times [0,T))$ of 
\eqref{KRF.dlchinh} such that $u_m|_{\bar{\Omega}\times\{0\}}$ and 
$u_m|_{\partial\Omega\times (0,T)}$ converge pointwise, respectively, to $u_0$ 
and $\varphi|_{\partial\Omega\times (0,T)}$. 
We also ask that the $u_m$ be uniformly bounded and
$u_m|_{\partial\Omega\times (\epsilon_m,T)}=\varphi|_{\partial\Omega\times (\epsilon_m,T)}$ 
for some
$\epsilon_m\searrow 0$.
\item[+] Use the a priori estimates to prove
$$
\|u_m\|_{C^2(\bar{\Omega}\times (\epsilon,T'))}\leq C_{\epsilon,T'}
$$
for any $0<\epsilon<T'<T$, where $C_{\epsilon,T'}>0$ is independent of $m$.
\item[+] Use $C^{2,\alpha}$ estimates and to prove
$$
\|u_m\|_{C^k(\bar{\Omega}\times (\epsilon,T'))}\leq C_{k,\epsilon,T'}
$$
for any $0<\epsilon<T'<T$ and $k>0$,
where $C_{k,\epsilon,T'}>0$ is independent on $m$. The 
$C^{2,\alpha}$ estimates  and the $C^{k,\alpha}$ regularity  will be mentioned in  section 5.
\item[+] By Ascoli's theorem, there exists a subsequence of $\{u_m\}$, denoted also by  $\{u_m\}$, 
and $u\in C^{\infty}(\bar{\Omega}\times (0,T)$ such that
$$
u_m\stackrel{C^k(\bar{\Omega}\times (\epsilon,T'))}{\longrightarrow}u.
$$
Then, $u$ satisfies \eqref{psh.dlchinh}, \eqref{KRF.dlchinh} 
and \eqref{bien.dlchinh}.
\item[+] Use Comparison principle to prove \eqref{initial.dlchinh}.
\item[+]Finally, we prove the uniqueness of $u$. 
\end{itemize}
We will study some important tools  before we prove Theorem \ref{app2}. In Section \ref{prelim}, we 
introduce some basic results about parabolic complex Monge-Amp\`ere equations. 
In Sections \ref{order1} and \ref{order2}, we prove
the a priori estimates theorem (Theorem \ref{Main}). In Section \ref{C2a}
 we establish the $C^{2,\alpha}$ estimate
needed to solve our problem.  Finally in Section \ref{pfmain} we prove Theorem \ref{app2}. 
%...................................................................................

%................................................................................%
%...............................................................................
\section{Preliminaries}
\label{prelim}
\subsection{Hou-Li theorem}
\begin{flushleft}
The Hou-Li theorem states
that equation \eqref{KRF} has a unique solution when the conditions are good 
enough. We will use it in Section \ref{pfmain} to obtain smooth solutions
to an approximating problem, to which we then will apply the a priori estimates
from Theorem \ref{Main}.  
\end{flushleft}
We first need the notion of subsolution.
\begin{Def}
\label{HLsubsol}
A function $\underline{u}\in C^{\infty}(\bar{\Omega}\times [0,T))$
is called a \emph{subsolution} of the equation \eqref{HLKRF} if and only if
\begin{equation}
\label{subsolu.houli}
\begin{cases}
\U(.,t) \mbox{is a strictly plurisubharmonic function,}\\
\dot{\U}\leq \log\det (\U)_{\alpha\bar{\beta}}+f(t,z,\U),\\
\U|_{\partial\Omega\times (0,T)}=\varphi|_{\partial\Omega\times (0,T)},\\
\U(.,0)\leq u_0 .
\end{cases} 
\end{equation}

\end{Def}
\begin{The}\label{houli}
Let $\Omega\subset\C^n$ be a bounded domain with smooth boundary. Let $T\in (0,\infty]$. Assume
that
\begin{itemize}
\item $\varphi$ is a smooth function in $\bar{\Omega}\times [0,T)$.
\item $f$ is a smooth function in $[0,T)\times\bar{\Omega}\times\R$ non increasing in the
lastest variable.
\item $u_0$ is a smooth strictly plurisubharmonic funtion in a neighborhood of $\Omega$.
\item $u_0(z)=\varphi (z,0),\;\forall z\in\partial\Omega$.
\item The compatibility condition is satisfied, i.e.
$$
\dot{\varphi}=\log\det (u_0)_{\alpha\bar{\beta}}+f(t,z,u_0),
\;\;\forall (z,t)\in \partial\Omega\times \{ 0 \}.
$$
\item There exists a subsolution to the equation \eqref{HLKRF}.
\end{itemize} 
Then there exists a unique solution 
$u\in C^{\infty}(\Omega\times (0,T))\cap C^{2;1}(\bar{\Omega}\times [0,T))$ of the equation
\begin{equation}
\label{HLKRF}
\begin{cases}
\begin{array}{ll}
\dot{u}=\log\det (u_{\alpha\bar{\beta}})+f(t,z,u)\;\;\;&\mbox{on}\;\Omega\times (0,T),\\
u=\varphi&\mbox{on}\;\partial\Omega\times [0,T),\\
u=u_0&\mbox{on}\;\bar{\Omega}\times\{ 0\}.\\
\end{array}
\end{cases} 
\end{equation}
\end{The}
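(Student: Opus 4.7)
The plan is to prove existence on $[0,T)$ by combining a short-time existence argument with uniform a priori estimates on every compact subinterval, and then to close the proof by a standard continuation/bootstrap scheme.

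First, I would establish short-time existence. Since $u_0$ is smooth and strictly plurisubharmonic and the compatibility condition holds, I would linearize the operator
$$F(u):=-\dot{u}+\log\det(u_{\alpha\bar{\beta}})+f(t,z,u)$$
at $u_0$. The linearization is
$$L\eta=-\dot{\eta}+u^{\bar{\beta}\alpha}\eta_{\alpha\bar{\beta}}+f_u(t,z,u)\eta,$$
which is uniformly parabolic thanks to strict plurisubharmonicity. Applying the implicit function theorem in the parabolic H\"older spaces $C^{2+\alpha,1+\alpha/2}(\bar{\Omega}\times[0,\tau])$, using the compatibility condition to match data at the parabolic corner $\partial\Omega\times\{0\}$, yields a solution on some short interval $[0,\tau)$. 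The standard parabolic linear theory (Ladyzhenskaya--Solonnikov--Uraltseva) supplies the needed Schauder estimates.

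Second, I would derive uniform a priori estimates on $[0,T']$ for every $T'<T$. The $C^0$ bound comes from the maximum principle: the subsolution $\underline{u}$ combined with $f_u\leq 0$ gives $u\geq \underline{u}$; for the upper bound one compares $u$ to a solution of an ODE in $t$ built from $\sup_{\bar{\Omega}} u_0$ and the supremum of $f$ on a bounded range. Boundary gradient estimates use barriers of the form $\underline{u}\pm A(\varphi-\underline{u})$, and interior gradient estimates follow from a maximum principle applied to an auxiliary quantity like $|\nabla u|^2 e^{-Au}$. The hardest step, and what I expect to be the main obstacle, is the boundary $C^2$ estimate: the tangential-tangential part is read off from $u|_{\partial\Omega}=\varphi$, but the mixed tangential-normal and the pure normal-normal second derivatives at the boundary require a careful barrier construction using the subsolution in an essential way, in the spirit of Caffarelli--Kohn--Nirenberg--Spruck and Guan, adapted to the parabolic setting to absorb the extra $-\dot{u}$ term. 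Once the boundary $C^2$ estimate is in hand, a global $C^2$ estimate on $\bar{\Omega}\times[0,T']$ follows from the standard Yau-type maximum-principle argument on $\log\lambda_{\max}(u_{\alpha\bar{\beta}})+A|\nabla u|^2$ or $e^{-Au}(n+\Delta u)$.

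Third, the uniform $C^2$ estimate makes the equation uniformly parabolic with concave structure in the Hessian, so the Evans--Krylov theorem gives a uniform $C^{2,\alpha}$ estimate, and bootstrapping with linear Schauder theory yields $C^{k,\alpha}$ bounds for every $k$. Combined with the initial smoothness and the compatibility condition, this produces regularity up to $\bar{\Omega}\times\{0\}$ at the $C^{2;1}$ level (interior regularity improving to $C^\infty$ on $\Omega\times(0,T)$). A standard continuation argument then shows that the maximal time of existence must be $T$: were it some $T^*<T$, the uniform estimates on $[0,T^*]$ would allow restarting the short-time existence procedure past $T^*$, contradicting maximality.

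Finally, uniqueness is an easy consequence of the parabolic comparison principle: if $u,v$ are two solutions, then $w=u-v$ vanishes on the parabolic boundary and satisfies a linear parabolic inequality with coefficient $f_u\leq 0$, so the maximum principle forces $w\equiv 0$. The technical heart of the whole argument, and the one I expect to require the most work, remains the boundary $C^2$ estimate with barriers built from the subsolution.
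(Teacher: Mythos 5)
First, a point of context: the paper does not prove Theorem \ref{houli} at all --- it is quoted from Hou--Li \cite{HL10} as a black box, to be used in Section \ref{pfmain} to solve the approximating problems. So there is no internal proof to compare against; the relevant comparison is with the standard scheme in the literature (CKNS/Guan in the elliptic case, Hou--Li in the parabolic one), and your outline does follow that scheme: short-time existence by linearization and parabolic Schauder theory at the compatible corner, a priori estimates up to $C^2$ on $\bar{\Omega}\times[0,T']$ with barriers built from the subsolution, Evans--Krylov plus bootstrap, continuation, and uniqueness by the comparison principle. The last point is unproblematic and is exactly Corollary \ref{compa} of the paper.

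However, as written your text is a program rather than a proof, and two concrete items are missing. First, the step you yourself identify as the heart --- the boundary estimates for the mixed tangential-normal and normal-normal second derivatives, carried out with barriers built from the subsolution (note the theorem assumes only a smooth bounded domain plus a subsolution, not strict pseudoconvexity, so the subsolution must do all the work, as in Guan) --- is only named, not executed; without it the argument does not close. Second, your list of a priori estimates omits the bound on $\dot{u}$ (obtained by differentiating the equation in $t$ and applying the maximum principle, using the compatibility condition to control $\dot{u}$ at $t=0$ and $\dot{\varphi}$ on the lateral boundary). This is not optional: a two-sided bound on $D^2u$ alone gives only upper bounds on the eigenvalues of $(u_{\alpha\bar{\beta}})$, whereas uniform parabolicity and the applicability of Evans--Krylov require the lower bound $\det(u_{\alpha\bar{\beta}})=e^{\dot{u}-f}\geq c>0$, i.e.\ a lower bound on $\dot{u}$; the same lower bound on the determinant is also what makes the normal-normal boundary estimate work. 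In the analogous a priori estimate the paper does prove (Theorem \ref{Main}), this $\dot{u}$-estimate is Proposition \ref{t} and is established before anything of second order. So the approach is the right one, but the proposal has genuine gaps at precisely the two places where the actual work lies.
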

\begin{Rem}
\begin{itemize}
\item[(i)]There is a corresponding result in the case of a compact K\"ahler manifold.
 On the compact  K\"ahler manifold $X$, we
must assume that $0<T<T_{max}$, where $T_{max}$ depends on $X$. In the case of domain 
$\Omega\subset\C^n$, we can assume that $T=+\infty$ if $\varphi, \U$ are defined on 
$\bar{\Omega}\times [0,+\infty)$ and $f$ is defined on $[0,+\infty)\times\bar{\Omega}\times\R$.
\item[(ii)] If $\Omega$ is a bounded smooth strictly pseudoconvex domain of $\C^n$ then 
one can prove that a subsolution always exists, and so Theorem 
\ref{houli} does not need the additional assumpation of existence of a subsolution.
\end{itemize}

\end{Rem}
\subsection{Maximum principle}
\begin{flushleft}
The following maximum principle is a basic tool to
establish upper and lower bounds in the sequel (see \cite{BG13} and \cite{IS13} for the proof).
\end{flushleft}
\begin{The}\label{max prin}
Let $\Omega$ be a bounded domain of $C^n$ and $T>0$. 
Let $\{\omega_t\}_{0<t<T}$ be a continuous family of continuous 
  positive definite Hermitian forms on $\Omega$.  Denote by $\Delta_t$ the Laplacian with 
 respect to $\omega_t$:
 $$
 \Delta_t f=\dfrac{n\omega_t^{n-1}\wedge dd^cf}{\omega_t^n},\;\forall f\in C^{\infty}(\Omega).
 $$
   Suppose that
$H \in C^{\infty}(\Omega \times (0,T)) \cap C(\bar{\Omega}\times [0,T))$ and satisfies\\
\begin{center}
$(\frac{\partial}{\partial t}-\Delta_t)H \leq 0 \:$ 
or
$\: \dot{H}_t \leq \log\dfrac{(\omega_t+dd^c H_t)^n}{\omega_t^n}$.
\end{center}
Then $\sup\limits_{\bar{\Omega}\times [0,T)} H = \sup\limits_{\partial_P(\Omega \times
[0,T))}H$. Here we denote $\partial_P(\Omega\times (0,T))=\partial\Omega\times (0,T)
\cup \bar{\Omega}\times\{ 0\}$.
\end{The}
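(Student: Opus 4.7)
The plan is to use the classical time-linear perturbation trick for the parabolic maximum principle, handling both of the hypothesized inequalities at a single interior-maximum argument. First I would fix $T'\in(0,T)$ and work on the compact set $\bar\Omega\times[0,T']$; by the assumed continuity of $H$, its supremum on this set is attained, and the goal becomes to show that it occurs on the parabolic boundary $\partial_P(\Omega\times[0,T'])$. Sending $T'\nearrow T$ will then give the claimed equality of suprema on $\bar\Omega\times[0,T)$.

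The central step is to introduce $\tilde H:=H-\epsilon t$ for small $\epsilon>0$. Since $dd^c\tilde H_t=dd^c H_t$, the hypothesis transforms into a strict version: either $(\partial_t-\Delta_t)\tilde H\leq-\epsilon<0$, or $\dot{\tilde H}_t\leq\log\bigl((\omega_t+dd^c\tilde H_t)^n/\omega_t^n\bigr)-\epsilon$. Let $(z_0,t_0)$ be a point where $\tilde H$ attains its maximum on $\bar\Omega\times[0,T']$, and suppose for contradiction that $(z_0,t_0)\notin\partial_P(\Omega\times[0,T'])$, so that $z_0\in\Omega$ and $t_0\in(0,T']$. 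Elementary calculus (a one-sided derivative suffices when $t_0=T'$) gives $\dot{\tilde H}(z_0,t_0)\geq 0$ and $dd^c\tilde H(z_0,t_0)\leq 0$ as Hermitian forms.

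In the linear case, $dd^c\tilde H(z_0,t_0)\leq 0$ forces $\Delta_{t_0}\tilde H(z_0,t_0)\leq 0$, so $(\partial_t-\Delta_t)\tilde H(z_0,t_0)\geq 0$, contradicting the strict inequality $\leq-\epsilon$. In the nonlinear case, $dd^c\tilde H\leq 0$ yields $\omega_{t_0}+dd^c\tilde H(z_0,t_0)\leq \omega_{t_0}$; since the determinant is monotone on the cone of positive Hermitian matrices, the log appearing on the right is $\leq 0$ whenever it is defined, and the inequality gives $\dot{\tilde H}(z_0,t_0)\leq-\epsilon<0$, contradicting $\dot{\tilde H}(z_0,t_0)\geq 0$. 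Hence the maximum of $\tilde H$ on $\bar\Omega\times[0,T']$ is attained on $\partial_P(\Omega\times[0,T'])$, so $\sup_{\bar\Omega\times[0,T']}(H-\epsilon t)\leq\sup_{\partial_P} H$; letting $\epsilon\to 0$ and then $T'\nearrow T$ finishes the proof.

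The only delicate issue I anticipate is the Monge-Amp\`ere case when $\omega_{t_0}+dd^c H_{t_0}$ fails to be positive definite at the interior-maximum point, where the right-hand side of the hypothesis must be interpreted as $-\infty$ (so that the inequality forces $\dot H\leq -\infty$, which is vacuous yet still rules out $\dot{\tilde H}\geq 0$ after subtracting $\epsilon$). In the applications of this principle made later in the paper, the function $H$ will always come with a built-in positivity guarantee for $\omega_t+dd^c H_t$, so this convention will not cause any real difficulty; the rest of the argument is standard, and the only input beyond calculus is the monotonicity of $\det$ on positive Hermitian matrices.
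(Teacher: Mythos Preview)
The paper does not give its own proof of this statement: immediately after the theorem it refers the reader to \cite{BG13} and \cite{IS13}. Your argument via the $H-\epsilon t$ perturbation is exactly the standard proof one finds in those references, and it is correct; there is nothing further to compare.
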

\begin{Cor}(Comparison principle)\label{compa} 
Let $\Omega$ be a bounded domain of $\C^n$ and $T\in (0,\infty]$. Let
 $u,v\in C^{\infty}(\Omega\times (0,T))\cap C(\bar{\Omega}\times [0,T))$ 
satisfying
\begin{itemize}
\item $u(.,t)$ and $v(.,t)$ are strictly plurisubharmonic functions for any $t\in [0,T)$,
\item $\dot{u}\leq \log\det (u_{\alpha\bar{\beta}})+f(t,z,u),$
\item $\dot{v}\geq \log\det (v_{\alpha\bar{\beta}})+f(t,z,v),$
\end{itemize}
where $f\in C^{\infty}([0,T)\times\bar{\Omega}\times\R)$ is non increasing in the last 
variable.\\
Then $\sup\limits_{\Omega\times (0,T)}(u-v)\leq max\{ 0, 
\sup\limits_{\partial_P(\Omega\times (0,T))}(u-v)\}$.
\end{Cor}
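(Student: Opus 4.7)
The plan is to apply a parabolic maximum principle to the perturbation $w_\epsilon := u - v - \epsilon t$ on the compact cylinder $\bar{\Omega} \times [0, T']$, for arbitrary $\epsilon > 0$ and arbitrary finite $T' \in (0, T)$, and then let $\epsilon \searrow 0$ followed by $T' \nearrow T$. The role of the term $-\epsilon t$ is to produce a strictly positive contribution to the time derivative at any interior maximum, which is what will yield a contradiction; this is necessary because $f$ is only assumed non-increasing, rather than strictly decreasing, in its last variable.

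Being continuous on the compact set $\bar{\Omega} \times [0, T']$, the function $w_\epsilon$ attains its supremum at some $(z_0, t_0)$. If this point lies in the parabolic boundary $\partial_P(\Omega \times (0, T')) = (\partial\Omega \times [0, T']) \cup (\bar{\Omega} \times \{0\})$, the desired bound is immediate. Otherwise $z_0 \in \Omega$ and $t_0 \in (0, T']$, where $u$ and $v$ are smooth, so the maximality conditions yield $\dot w_\epsilon(z_0, t_0) \geq 0$ and $(w_\epsilon)_{\alpha\bar{\beta}}(z_0, t_0) \leq 0$ as a Hermitian matrix. Rewritten, these give $\dot u - \dot v \geq \epsilon$ and $u_{\alpha\bar{\beta}}(z_0, t_0) \leq v_{\alpha\bar{\beta}}(z_0, t_0)$; since both complex Hessians are positive definite, monotonicity of the determinant on positive Hermitian matrices gives $0 < \det u_{\alpha\bar{\beta}} \leq \det v_{\alpha\bar{\beta}}$, and hence $\log\det u_{\alpha\bar{\beta}} \leq \log\det v_{\alpha\bar{\beta}}$ at $(z_0, t_0)$. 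Subtracting the two parabolic differential inequalities then yields $\dot u - \dot v \leq f(t_0, z_0, u(z_0, t_0)) - f(t_0, z_0, v(z_0, t_0))$. If one had $u(z_0, t_0) > v(z_0, t_0)$, the hypothesis $f_u \leq 0$ would force the right-hand side to be $\leq 0$, contradicting $\dot u - \dot v \geq \epsilon > 0$. Therefore $u(z_0, t_0) \leq v(z_0, t_0)$, so $w_\epsilon(z_0, t_0) \leq -\epsilon t_0 \leq 0$.

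Combining the two cases produces $\sup_{\bar{\Omega} \times [0, T']} w_\epsilon \leq \max\{0, \sup_{\partial_P(\Omega \times (0, T))}(u - v)\}$. Since $u - v \leq w_\epsilon + \epsilon T'$ on the cylinder, letting $\epsilon \to 0$ and then $T' \to T$ gives the stated inequality. The only subtlety I anticipate concerns the case $t_0 = T'$, where the time derivative is only a one-sided limit; but the maximum at $T'$ still forces $\dot w_\epsilon(z_0, T') \geq 0$ from the left, and the argument is unchanged. No other genuine obstacle arises, since both ingredients used (monotonicity of $\det$ on positive Hermitian matrices, and monotonicity of $f$ in the last variable) are standard or directly assumed.
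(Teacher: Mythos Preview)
Your proof is correct. The paper states this as a corollary of the maximum principle (Theorem~\ref{max prin}) without giving an explicit argument; your $\epsilon$-perturbation argument is precisely the standard way to fill in the details, since the extra term $f(t,z,u)-f(t,z,v)$ prevents one from applying Theorem~\ref{max prin} to $H=u-v$ directly and forces the $\max\{0,\cdot\}$ in the conclusion. In effect you are reproving the maximum principle in this specific setting rather than quoting it, which is entirely appropriate here.
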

\begin{Cor} Let $\Omega$ be a bounded domain of $\C^n$ and $T\in (0,\infty]$. We denote
by  $L$  a operator on $C^{\infty}(\Omega\times (0,T))$ satisfying
$$
L(f)=\dfrac{\partial f}{\partial t}-\sum a_{\alpha\bar{\beta}}
\dfrac{\partial^2f}{\partial z_{\alpha}\partial \bar{z}_{\beta}}-b.f,
$$
where $a_{\alpha\bar{\beta}}, b\in C(\Omega\times (0,T))$, $(a_{\alpha\bar{\beta}}(z,t))$
are positive definite Hermitian matrices and $b(z,t)<0$.\\
Assume that  $\phi\in C^{\infty}(\Omega\times (0,T))\cap C(\bar{\Omega}\times [0,T))$ 
satisfies\\
$$L(\phi)\leq  0.$$
Then $\phi\leq max(0,\sup\limits_{\partial_P(\Omega\times (0,T))}\phi).$
\end{Cor}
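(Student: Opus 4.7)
The plan is to apply a standard parabolic maximum principle, with two twists: a linear-in-time perturbation to upgrade the inequality $L(\phi)\le 0$ to a strict one, and a sign analysis at the maximum point that uses the hypothesis $b<0$ together with the positivity of $\phi$ at the max to account for the $\max(0,\cdot)$ on the right-hand side.

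Fix any $T'\in(0,T)$ and set $\psi_{\varepsilon}:=\phi-\varepsilon t$ for a small parameter $\varepsilon>0$. A direct computation gives $L(\psi_{\varepsilon})=L(\phi)-\varepsilon+b\varepsilon t\le -\varepsilon$ on $\Omega\times(0,T']$, using $b<0$ and $L(\phi)\le 0$. Since $\phi\in C(\bar{\Omega}\times[0,T])$, the function $\psi_{\varepsilon}$ is continuous on the compact set $\bar{\Omega}\times[0,T']$ and attains its supremum at some point $(z_0,t_0)$. If $(z_0,t_0)\in\partial_P(\Omega\times(0,T'])$, then $\psi_{\varepsilon}(z_0,t_0)\le \sup_{\partial_P(\Omega\times(0,T))}\phi$, and the desired bound on $\psi_\varepsilon$ follows. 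Otherwise $z_0\in\Omega$ and $t_0\in(0,T']$, and the goal is to show that in this case necessarily $\psi_{\varepsilon}(z_0,t_0)<0$.

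At such an interior-in-$z$ maximum with $t_0>0$, one has $\partial_t\psi_{\varepsilon}(z_0,t_0)\ge 0$ (from two-sided comparison if $t_0<T'$, from one-sided comparison if $t_0=T'$), and the complex Hessian $(\psi_{\varepsilon})_{\alpha\bar{\beta}}(z_0,t_0)$ is negative semi-definite, so $-\sum a_{\alpha\bar{\beta}}(\psi_{\varepsilon})_{\alpha\bar{\beta}}(z_0,t_0)\ge 0$ by positive-definiteness of $(a_{\alpha\bar{\beta}})$. If additionally $\psi_{\varepsilon}(z_0,t_0)\ge 0$ then $-b\psi_{\varepsilon}(z_0,t_0)\ge 0$, and summing the three non-negative quantities yields $L(\psi_{\varepsilon})(z_0,t_0)\ge 0$, contradicting $L(\psi_{\varepsilon})\le -\varepsilon<0$. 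Therefore at any interior maximum we must have $\psi_{\varepsilon}(z_0,t_0)<0$. Combining the two cases gives $\psi_{\varepsilon}\le \max\bigl(0,\sup_{\partial_P(\Omega\times(0,T))}\phi\bigr)$ on $\bar{\Omega}\times[0,T']$. Sending $\varepsilon\to 0$ and then $T'\to T$ concludes the argument.

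The step I expect to need the most care is the interior maximum with $t_0=T'$ (which is where the $\varepsilon t$ perturbation is genuinely useful, since without strict inequality one cannot rule out a max on $\{t=T'\}$); however, this is handled by the one-sided time derivative inequality. The structural reason the conclusion involves $\max(0,\cdot)$ rather than just $\sup_{\partial_P}\phi$ is precisely the need to have $\psi_{\varepsilon}\ge 0$ at the interior maximum in order to combine with $b<0$ to produce the non-negative term $-b\psi_{\varepsilon}$.
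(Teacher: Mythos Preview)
Your argument is correct and is the standard proof of this parabolic maximum principle with a zero-order term of the right sign. The paper itself does not give a proof of this corollary; it is listed among the preliminaries (alongside Theorem~\ref{max prin} and Corollary~\ref{compa}) without a proof, as a direct consequence of the maximum principle (for which the paper refers to \cite{BG13} and \cite{IS13}). So there is nothing to compare against, and your write-up would serve perfectly well as the supplied argument.

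Two cosmetic remarks. First, you write ``$\phi\in C(\bar{\Omega}\times[0,T])$'' where the hypothesis only gives $[0,T)$; since you immediately restrict to $[0,T']$ with $T'<T$ this is harmless, but you should state it correctly. Second, your computation $L(\psi_\varepsilon)=L(\phi)-\varepsilon+b\,\varepsilon t$ is right, and it is worth noting explicitly that the extra term $b\,\varepsilon t$ is \emph{non-positive} (since $b<0$ and $t\ge 0$), which is why the strict inequality $L(\psi_\varepsilon)\le -\varepsilon$ follows; you do say this, but the reader benefits from seeing that the sign of $b$ is used twice in the argument --- once here and once at the interior maximum.
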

\subsection{The Laplacian inequalities}
\begin{flushleft}
We shall need two standard auxiliary results (see \cite{Yau78}, \cite{Siu87} for a proof).
\end{flushleft}

\begin{The}
\label{lap 1}
Let $\omega_1,\omega_2$ be positive $(1,1)$-forms on a complex manifold $X$.Then\\
$$
n\left(\dfrac{\omega_1^n}{\omega_2^n}\right)^{1/n}\leq tr_{\omega_2}(\omega_1)
\leq n\left(\dfrac{\omega_1^n}{\omega_2^n}\right)(tr_{\omega_1}(\omega_2))^{n-1},
$$
where $tr_{\omega_1}(\omega_2)=\dfrac{n\omega_1^{n-1}\wedge\omega_2}{\omega_1^n}$.
\end{The}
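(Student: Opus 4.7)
The inequalities are pointwise statements, so it suffices to work at a single point $p\in X$. Since both $\omega_1$ and $\omega_2$ are positive Hermitian $(1,1)$-forms, standard linear algebra lets us simultaneously diagonalize them: there exist local coordinates $(z_1,\dots,z_n)$ near $p$ in which, at $p$,
$$
\omega_2 = \tfrac{i}{2}\sum_{j=1}^n dz_j\wedge d\bar z_j,\qquad
\omega_1 = \tfrac{i}{2}\sum_{j=1}^n \lambda_j\, dz_j\wedge d\bar z_j,
$$
with $\lambda_1,\dots,\lambda_n>0$. A direct computation of the $(n,n)$-forms then yields, at $p$,
$$
\frac{\omega_1^n}{\omega_2^n}=\prod_{j=1}^n\lambda_j,\qquad
tr_{\omega_2}(\omega_1)=\sum_{j=1}^n\lambda_j,\qquad
tr_{\omega_1}(\omega_2)=\sum_{j=1}^n\frac{1}{\lambda_j}.
$$

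With this reduction the left-hand inequality becomes $n\bigl(\prod\lambda_j\bigr)^{1/n}\le\sum\lambda_j$, which is the classical AM--GM inequality for positive reals. For the right-hand inequality, set $\mu_j:=1/\lambda_j>0$. Multiplying both sides by $\prod \mu_k$, the desired estimate
$$
\sum_j\lambda_j\;\le\;n\,\Bigl(\prod_k\lambda_k\Bigr)\Bigl(\sum_j\tfrac{1}{\lambda_j}\Bigr)^{n-1}
$$
is equivalent to
$$
e_{n-1}(\mu_1,\dots,\mu_n)\;\le\;n\,e_1(\mu_1,\dots,\mu_n)^{n-1},
$$
where $e_k$ denotes the $k$-th elementary symmetric polynomial. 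But $e_{n-1}(\mu)=\sum_{i=1}^n\prod_{k\ne i}\mu_k$ is a sum of $n$ terms, and each factor $\mu_k$ is bounded by $e_1(\mu)=\sum_j\mu_j$; hence each of the $n$ terms is at most $e_1(\mu)^{n-1}$, giving the required bound.

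There is no genuine obstacle here: the only conceptual point is the simultaneous diagonalization of two positive Hermitian forms, after which the statements are elementary inequalities between positive reals. In presenting the argument I would simply state the diagonalization, record the three resulting identities, and then invoke AM--GM for the lower bound and the trivial pointwise bound on $e_{n-1}$ for the upper bound.
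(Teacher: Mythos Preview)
Your proof is correct. The paper does not actually give its own proof of this statement: it is quoted as a standard auxiliary result with references to \cite{Yau78} and \cite{Siu87}, and your argument---pointwise simultaneous diagonalization followed by AM--GM for the lower bound and the trivial estimate $e_{n-1}(\mu)\le n\,e_1(\mu)^{n-1}$ for the upper bound---is exactly the standard computation one finds there.
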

\begin{The}\label{lap 2}
Let $\omega, \; \omega '$ be two K\"ahler forms on a complex manifold $X$. If the holomorphic bisectional curvature of $\omega$ is bounded below by a constant $B\in\R$ on $X$,then\\
$$
\Delta_{\omega '}\log tr_{\omega}(\omega ')\geq -\frac{tr_{\omega}Ric(\omega ')}{tr_{\omega}(\omega ')}+B\, tr_{\omega '} (\omega),
$$
where $Ric(\omega')$ is the form associated  to the Ricci curvature of $\omega'$.
\end{The}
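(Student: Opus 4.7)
The plan is to reduce to a pointwise computation at each $p \in X$ and then carry out the classical Aubin--Yau--Siu calculation. I would choose holomorphic coordinates centered at $p$ that are K\"ahler normal for $\omega$, so that $g_{i\bar{j}}(p)=\delta_{ij}$ and $\partial_k g_{i\bar{j}}(p)=0$, while simultaneously diagonalizing $\omega'$ at $p$ with $g'_{i\bar{j}}(p)=\lambda_i\,\delta_{ij}$. Writing $h:=tr_{\omega}(\omega')=g^{i\bar{j}}g'_{i\bar{j}}$, I would then decompose
$$
\Delta_{\omega'}\log h \;=\; \frac{g'^{k\bar{l}}\partial_k\bar{\partial}_l h}{h} \;-\; \frac{g'^{k\bar{l}}\partial_k h\,\bar{\partial}_l h}{h^2}
$$
and bound each piece at $p$.

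Because first derivatives of $g$ vanish at $p$, only two summands survive in the Leibniz expansion of $\partial_k\bar{\partial}_l h$ there. The first, $g'_{i\bar{j}}\partial_k\bar{\partial}_l g^{i\bar{j}}$, is purely curvature: differentiating $g^{i\bar{a}}g_{a\bar{b}}=\delta^i_b$ twice at $p$ identifies $\partial_k\bar{\partial}_l g^{i\bar{j}}(p)$ with a component of the curvature tensor of $\omega$, so after contracting with $g'^{k\bar{l}}g'_{i\bar{j}}$ and using the diagonal form this piece becomes, up to sign, $\sum_{i,k}(\lambda_i/\lambda_k)R_{i\bar{i}k\bar{k}}(p)$; the bisectional bound $R_{i\bar{i}k\bar{k}}\geq B$ then yields a contribution $\geq B\cdot tr_{\omega'}(\omega)\cdot h$, i.e.\ the term $B\,tr_{\omega'}(\omega)$ after division by $h$. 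The second summand, $g^{i\bar{j}}\partial_k\bar{\partial}_l g'_{i\bar{j}}$, I would handle using $-Ric(\omega')_{k\bar{l}}=\partial_k\bar{\partial}_l\log\det g'$, which at $p$ expands to
$$
g'^{i\bar{j}}\partial_k\bar{\partial}_l g'_{i\bar{j}} \;=\; -Ric(\omega')_{k\bar{l}} \;+\; g'^{i\bar{q}}g'^{p\bar{j}}\partial_k g'_{p\bar{q}}\,\bar{\partial}_l g'_{i\bar{j}}.
$$
Inserting this into the formula for $\Delta_{\omega'}\log h$ produces the desired $-tr_\omega Ric(\omega')/h$ contribution, but leaves two leftover quantities: a positive term quadratic in $\partial g'$, and the negative gradient term $-g'^{k\bar{l}}\partial_k h\,\bar{\partial}_l h/h^2$.

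The main obstacle, and the real content of the estimate, is to show that the positive residual dominates the negative gradient term. Expanding $\partial_k h = g^{i\bar{j}}\partial_k g'_{i\bar{j}}$ at $p$ and using the K\"ahler symmetry $\partial_k g'_{i\bar{j}}=\partial_i g'_{k\bar{j}}$, a careful Cauchy--Schwarz argument (with indices chosen precisely to match the quadratic form on the right) gives
$$
g'^{k\bar{l}}\partial_k h\,\bar{\partial}_l h \;\leq\; h\cdot g'^{k\bar{l}}g'^{i\bar{q}}g'^{p\bar{j}}\partial_k g'_{p\bar{q}}\,\bar{\partial}_l g'_{i\bar{j}}.
$$
After division by $h^2$ this is exactly the inequality needed to cancel the offending gradient term against the positive Ricci residual, and the claimed lower bound for $\Delta_{\omega'}\log tr_{\omega}(\omega')$ emerges at $p$. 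Since $p\in X$ was arbitrary, the theorem follows. The delicate point throughout is matching conventions and bookkeeping indices so that the Cauchy--Schwarz step produces precisely the term one needs to absorb, rather than something weaker.
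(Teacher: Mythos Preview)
The paper does not supply its own proof of this statement; it is quoted as a standard auxiliary result with references to \cite{Yau78} and \cite{Siu87}. Your outline is precisely the classical Aubin--Yau--Siu computation from those sources, so your approach coincides with the one the paper invokes.

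There is, however, one concrete slip in your handling of the Ricci term. The second summand in $\partial_k\bar\partial_l h$ at $p$ is $g^{i\bar j}\partial_k\bar\partial_l g'_{i\bar j}=\sum_i\partial_k\bar\partial_l g'_{i\bar i}$, which carries the inverse of $g$, not of $g'$. The log-determinant identity you display is for $g'^{i\bar j}\partial_k\bar\partial_l g'_{i\bar j}$; plugging it in and contracting with $g'^{k\bar l}$ would produce $-tr_{\omega'}Ric(\omega')$ (the scalar curvature of $\omega'$), not the desired $-tr_\omega Ric(\omega')$, and the accompanying positive residual would carry an extra factor of $g'^{-1}$ that no longer matches the gradient term. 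The fix is to express $\partial_k\bar\partial_l g'_{i\bar j}$ through the full curvature tensor of $\omega'$,
\[
R'_{i\bar j k\bar l}=-\partial_k\bar\partial_l g'_{i\bar j}+g'^{p\bar q}\,\partial_k g'_{i\bar q}\,\bar\partial_l g'_{p\bar j},
\]
contract first with $g'^{k\bar l}$ to obtain $Ric(\omega')_{i\bar j}$, and only then trace with $g^{i\bar j}$. This yields the correct term $-tr_\omega Ric(\omega')$ together with the positive residual $\sum_{i,k,p}\lambda_k^{-1}\lambda_p^{-1}|\partial_k g'_{i\bar p}|^2$, against which your Cauchy--Schwarz step (using the K\"ahler symmetry $\partial_k g'_{i\bar i}=\partial_i g'_{k\bar i}$ exactly as you indicate) goes through and completes the proof.
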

\begin{Rem}
Applying Theorem \ref{lap 2} for $\omega=dd^c|z|^2$ and  $\omega'=dd^cu$, we have\\
$$
\sum u^{\alpha\bar{\beta}}(\log \Delta u)_{\alpha\bar{\beta}}\geq\dfrac{\Delta\log\det (u_{\alpha\bar{\beta}})}{\Delta u}.
$$
\end{Rem}
\subsection{Construction of subsolutions}\label{subsolution}
\begin{flushleft}
We give a first construction which will be used in the proof of Theorem \ref{Main}.
First we need a notion of subsolution weaker than the one in Definition \ref{HLsubsol}.
\end{flushleft}
\begin{Def}
We say that a function $\underline{u}\in C^{\infty}(\bar{\Omega}\times [0,T))$ is a 
\emph{subsolution} of the equation \eqref{eqmain} if
$$
\dot{\underline{u}}\leq \log\det (\underline{u}_{\alpha\bar{\beta}})+f(t,z,\underline{u}).
$$
\end{Def}
We will construct subsolutions of \eqref{eqmain} in order to prove some estimates on the boundary.

%\begin{flushleft}
Let $\rho\in SPSH(\bar{\Omega})\cap C^{\infty}(\bar{\Omega})$ be a function which defines $\Omega$. We also assume that $\inf\rho=-1$. 
Let $\zeta\in C^{\infty}(\R)$ such that $0\leq\zeta\leq 1$, $\zeta|_{[0,1]}=1$ and
 $\zeta|_{[2,\infty)}=0$. 
 
Let $\varphi$ and $u_0$ be as in Theorem \ref{Main}.
For any $m>0$, we denote  the function $\varphi_m\in C^{\infty}
 (\bar{\Omega}\times [0,T))$ by the formula
%\end{flushleft}
$$\varphi_m=\varphi-Osc(u_0)\cdot\zeta(mt).$$
Then there exists $M_m>0$ depending on $\rho, T, C_u, C_{\varphi}, C_f$ such that the function
$\underline{u}_m=\varphi_m+M_m\rho$ satisfies
\begin{center}
$\dot{\U}_m\leq \log\det(\U_m )_{\alpha\bar{\beta}}+f(t,z,\U_m)$ on $\Omega\times (0,T),$\\
$dd^c(\U_m)\geq dd^c|z|^2$ on $\Omega\times [0,T).$\\
\end{center}
Then $\U_m$ is a subsolution of \eqref{eqmain}. Moreover,
\begin{center}
$\U_m|_{\partial_P(\Omega\times (0,T))}\leq u|_{\partial_P(\Omega\times (0,T))},$\\
$\U_m|_{\partial\Omega\times (\frac{2}{m},T)}=
\varphi|_{\partial\Omega\times (\frac{2}{m},T)}.$
\end{center}
By the maximum principle, we have
\begin{center}
$\underline{u}_m\leq u$ on $\Omega\times (0,T)$.\\
\end{center}
%...................................................................................
%....................................................................................
In the next two sections, we will prove Theorem \ref{Main}. For convenience, we define
 an operator $L$ on $C^{\infty}(\Omega\times (0,T))$  by the formula
 \begin{equation}
 \label{L}
 L(\phi)=\dot{\phi}-\sum u^{\alpha\bar{\beta}}\phi_{\alpha\bar{\beta}}-f_u(t,z,u)\phi,
 \end{equation}
 where $u$ is the function in Theorem \ref{Main} and $(u^{\alpha\bar{\beta}})$ is the transpose of 
 inverse matrix of Hessian matrix  $(u_{\alpha\bar{\beta}})$.
 
\section{Order 1 a priori estimates}
\label{order1}
In this section, we will estimate $u$, $\dot{u}$ and $|\nabla u|$.\\
Clearly,\\
\begin{center}
$\underline{u}_1\leq u \leq \sup\limits_{\partial\Omega\times (0,T)}\varphi\;$ on $\Omega\times (0,T)$.\\
\end{center}
Then \\
\begin{center}
$-M_1 -2\sup|\varphi|-C_u\leq u(z,t) 
\leq\sup\limits_{\partial\Omega\times (0,T)}\varphi,$\\
\end{center}
where $M_1$ is the constant defined in \ref{subsolution}.
Let $C_1=M_1+2C_{\varphi}+C_u$ , we obtain 
\begin{equation}
\label{sup u}
\sup |u|\leq C_1.
\end{equation}
 \subsection{Bounds on $\bf{\dot{u}}$}
 \begin{Prop} \label{t}
There exists $C_2>0$ depending only on $T,C_f, C_1$ such that\\
\begin{center}
$t|\dot{u}|\leq C_2$ on $\Omega\times (0,T).$
\end{center}
 \end{Prop}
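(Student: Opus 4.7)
The plan is to apply the parabolic maximum principle via the operator $L$ from \eqref{L} to an auxiliary function of the form $t\dot u - u - Kt$, with $K$ a constant depending only on $T$, $C_f$ and $C_1$. Multiplication by $t$ is essential, since $u_0$ is merely bounded and $\dot u$ cannot be expected to be bounded as $t\to 0^+$.

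The first step is to compute the action of $L$ on $\dot u$ and on $u$. Differentiating \eqref{eqmain} with respect to $t$ gives $\ddot u = u^{\alpha\bar\beta}\dot u_{\alpha\bar\beta}+f_t+f_u\dot u$, so $L(\dot u)=f_t$. The identity $u^{\alpha\bar\beta}u_{\alpha\bar\beta}=n$ immediately yields $L(u)=\dot u-n-f_uu$. Combining these by the Leibniz rule,
$$
L(t\dot u - u) \;=\; \dot u + tL(\dot u) - L(u) \;=\; tf_t + n + f_u u,
$$
whose absolute value, thanks to \eqref{cf}, \eqref{fu} and \eqref{sup u}, is bounded by a constant $K=K(T,C_f,C_1)$.

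Set $\psi_+ = t\dot u - u - Kt$. Since $f_u\leq 0$ and $t\geq 0$, one has $L(Kt) = K(1 - f_u t) \geq K$, so $L(\psi_+)\leq 0$. The comparison principle (Corollary following Theorem \ref{max prin}) then forces $\sup\psi_+$ to be attained on $\partial_P(\Omega\times(0,T))$. On $\bar\Omega\times\{0\}$ the factor $t$ kills the singularity and $\psi_+ = -u_0$, bounded by $C_1$; on $\partial\Omega\times(0,T)$ the condition $u=\varphi$ gives $\dot u = \dot\varphi$, so $\psi_+$ is controlled by $T$, $C_\varphi$ and $K$, hence by $T,C_f,C_1$ via the estimate $C_\varphi\leq C_1$. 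Rearranging delivers the upper bound $t\dot u\leq C_2$. The lower bound is obtained symmetrically by applying the same argument to $\psi_- = -t\dot u + u - Kt$, since $L(-t\dot u + u) = -tf_t - n - f_u u$ is controlled by the same constant $K$.

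The main subtlety is that the zero-order coefficient $f_u$ in $L$ is only nonpositive, not strictly negative as assumed in the corollary to Theorem \ref{max prin}. This is dealt with either by the harmless perturbation $\psi_\pm\mapsto\psi_\pm -\varepsilon t$ followed by $\varepsilon\to 0$, or by a direct interior-maximum argument: at a point where $\psi_\pm>0$ attains its maximum, the term $-f_u\psi_\pm$ is already nonnegative and supplies the missing inequality. The remaining computations are routine and involve no hidden difficulties.
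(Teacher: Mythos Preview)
Your proof is correct and follows essentially the same route as the paper: compute $L(t\dot u - u) = n + tf_t + uf_u$, bound this by a constant $K = K(T,C_f,C_1)$, and apply the maximum principle to $t\dot u - u \mp Kt$ to conclude. Your treatment is in fact slightly more careful than the paper's, since you explicitly address the case $f_u\le 0$ (rather than $f_u<0$) in the zero-order term of $L$, which the paper's corollary nominally requires but its proof silently glosses over.
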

 \begin{proof}
 %Recall that, for $\phi\in C^{\infty}(\Omega\times (0,T))$ \\
 %$$L(\phi)=\dot{\phi}-\sum u^{\alpha\bar{\beta}}\phi_{\alpha\bar{\beta}}-\phi .f_u(t,z,u)$$
 Take $L$ as in \eqref{L}, then
 $$
 L(t\dot{u}-u)=t\ddot{u}-t\sum u^{\alpha\bar{\beta}}\dot{u}_{\alpha\bar{\beta}}+n
 -(t\dot{u}-u)f_u(t,z,u).
 $$
  By equation \eqref{eqmain}, we have\\
 $$t\ddot{u}=t\sum u^{\alpha\bar{\beta}}\dot{u}_{\alpha\bar{\beta}}+t.f_t(t,z,u)+t\dot{u}.f_u(t,z,u).$$\\
 Then
 $$
 -C^{'}_2\leq L(t\dot{u}-u)=n+t.f_t(t,z,u)+u.f_u(t,z,u)\leq C^{'}_2,
 $$
 where $C^{'}_2=n+C_f(T+C_1)>0$.\\
 Since $L(t\dot{u}-u-C^{'}_2t)\leq 0$ and 
 $L(t\dot{u}-u +C^{'}_2t)\geq 0$,
 by the maximum principle, we obtain
 $$
 t\dot{u}-u-C^{'}_2t\leq \sup\limits_{\partial_P(\Omega\times (0,T))}(t\dot{u}-u-C^{'}_2t)
 \leq (C_{\varphi}+C^{'}_2)T+C_1 ,
 $$
 $$
 t\dot{u}-u +C^{'}_2t\geq \inf\limits_{\partial_P(\Omega\times (0,T))}(t\dot{u}-u
  +C^{'}_2t)\geq -(C_{\varphi}+C^{'}_2)T-C_1.
  $$
 Thus
$t|\dot{u}|\leq C_2$ on $\Omega\times (0,T)$,
where $C_2=(C_{\varphi}+2C^{'}_2)T+2C_1$.
 \end{proof}
 
 \subsection{Gradient estimates}
 \begin{Prop} 
 \label{gra 1}
 Let $m>\frac{2}{T}$. Then there exists $C_3=C_3(\Omega, M_m,C_{\varphi})>0$ such that\\
 \begin{center}
 $|\nabla u |\leq C_3$ on $\partial\Omega\times (\frac{2}{m},T).$
 \end{center}
 \end{Prop}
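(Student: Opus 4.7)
The plan is a standard two-sided barrier argument at the boundary. Since $u=\varphi$ on $\partial\Omega\times [0,T)$, the tangential components of $\nabla u$ along $\partial\Omega$ coincide with those of $\nabla\varphi$, hence are bounded by $C_\varphi$. It therefore suffices to bound the outward normal derivative $\partial_\nu u$ on $\partial\Omega\times (2/m,T)$, where $\nu$ denotes the outward unit normal.

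For the upper bound on $\partial_\nu u$, I would use the subsolution $\U_m=\varphi_m+M_m\rho$ from Section \ref{subsolution} as a lower barrier. For $t>2/m$ one has $\zeta(mt)=0$, hence $\varphi_m(\cdot,t)=\varphi(\cdot,t)$ and thus $\U_m(z,t)=\varphi(z,t)+M_m\rho(z)$. By construction $\U_m\leq u$ on $\Omega\times (0,T)$ with equality on $\partial\Omega\times (2/m,T)$, so comparing inward-normal difference quotients at any boundary point $p$ immediately yields
$$
\partial_\nu u(p,t)\leq \partial_\nu \U_m(p,t)
=\partial_\nu\varphi(p,t)+M_m\,\partial_\nu\rho(p),
$$
which is bounded by a constant depending only on $\Omega$, $M_m$ and $C_\varphi$.

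For the matching lower bound on $\partial_\nu u$ I would employ the harmonic extension as an upper barrier: for each $t\in (2/m,T)$ let $h(\cdot,t)\in C^\infty(\bar\Omega)$ solve the Laplace equation in $z$ on $\Omega$ with boundary value $\varphi(\cdot,t)$. Since $u(\cdot,t)$ is plurisubharmonic and hence subharmonic in $z$, the classical maximum principle gives $u(\cdot,t)\leq h(\cdot,t)$ on $\Omega$ with equality on $\partial\Omega$; standard elliptic boundary regularity bounds $\|h(\cdot,t)\|_{C^1(\bar\Omega)}$ in terms of $\Omega$ and $\|\varphi(\cdot,t)\|_{C^2(\bar\Omega)}\leq C_\varphi$. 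The same difference-quotient argument then yields $\partial_\nu u(p,t)\geq \partial_\nu h(p,t)\geq -C(\Omega,C_\varphi)$. Combining the two estimates produces the desired bound $|\nabla u|\leq C_3(\Omega,M_m,C_\varphi)$ on $\partial\Omega\times (2/m,T)$. No step presents a serious obstacle: the subsolution has already been constructed in Section \ref{subsolution}, the harmonic extension is classical, and the reduction to normal derivatives rests on the elementary observation that if two $C^1$ functions agree on the boundary and one lies below the other in a one-sided neighbourhood, then their outward normal derivatives are ordered in the opposite sense.
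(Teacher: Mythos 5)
Your proposal is correct and is essentially the paper's argument: the paper also sandwiches $u$ between the subsolution $\U_m=\varphi+M_m\rho$ (for $t>2/m$) and the spatial harmonic extension $h$ of $\varphi$, both agreeing with $u$ on $\partial\Omega\times(2/m,T)$, and deduces the gradient bound by comparing normal derivatives there (phrased in the paper as $|\nabla(u-\U_m)|\leq|\nabla(h-\U_m)|$ on the boundary). Your explicit split into tangential and normal components is just a minor repackaging of the same barrier argument.
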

 \begin{proof}
 Let $h\in C^{\infty}(\bar{\Omega}\times [0,T))$ be a spatial harmonic function (i.e.
 harmonic with respect to $z$) satisfying
 \begin{center}
 $h=\varphi$ on $\partial\Omega\times [0,T).$
 \end{center}
 Then taking $\underline{u}_m$ as \ref{subsolution} , we have
 \begin{center}
 $\underline{u}_m\leq u\leq h$ on $\Omega\times (\frac{2}{m},T),$\\
 $\underline{u}_m=u=h=\varphi$ on $\partial\Omega\times (\frac{2}{m},T).$
 \end{center}
 Hence 
 \begin{center}
 $|\nabla (u-\U_m) |\leq |\nabla (h-\U_m)|$ on $\partial\Omega\times (\frac{2}{m},T).$
 \end{center}
 Thus
 \begin{center}
 $|\nabla u|\leq |\nabla \U_m|+ |\nabla (h-\U_m)|\leq C_3$ on $\partial\Omega\times (\frac{2}{m},T),$\\
 \end{center}
 where $C_3 > 0$ depends only on $\Omega, C_{\varphi}, M_m$.\\
 \end{proof}
 
 \begin{Prop}\label{gra 2}
 Assume that $m, C_3$ satisfy Proposition $\ref{gra 1}$ and $\frac{2}{m}<\epsilon<T$. 
 Then there exists $C_4=C_4(\Omega, m,\epsilon, T, C_f,C_1,C_2,C_3)>0$ such that
 \begin{center}
 $|\nabla u |\leq C_4$ on $\Omega\times (\epsilon,T).$
 \end{center}
 \end{Prop}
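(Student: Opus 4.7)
The plan is to apply the parabolic maximum principle to a well-chosen auxiliary function of the form
\[
H(z,t):=\bigl(t-\tfrac{2}{m}\bigr)\log\bigl(|\nabla u|^{2}+1\bigr)+\phi(u),
\]
on $\bar\Omega\times[2/m,T']$ for any $T'<T$, where $\phi\in C^{\infty}([-C_{1},C_{1}])$ is a convex weight (for instance $\phi(s)=-As+Bs^{2}$ with large positive $A,B$) to be calibrated. The natural operator to use is $\tilde L:=\partial_{t}-\sum u^{\alpha\bar\beta}\partial_{\alpha}\partial_{\bar\beta}$, i.e.\ the operator $L$ of \eqref{L} without its $f_{u}$-term, which behaves well on differentiated quantities. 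On the parabolic boundary $H$ is already under control: at $t=2/m$ the cut-off factor vanishes and $H=\phi(u)$ is bounded by \eqref{sup u}, while on $\partial\Omega\times[2/m,T']$ Proposition~\ref{gra 1} together with \eqref{sup u} yield $|\nabla u|\leq C_{3}$ and $|u|\leq C_{1}$, so $H$ is bounded by a constant depending only on the listed data.

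Next I would analyze the situation at an interior maximum $(z_{0},t_{0})\in\Omega\times(2/m,T']$, where the usual conditions $\nabla H=0$, $\dot H\geq 0$ and $\sum u^{\alpha\bar\beta}H_{\alpha\bar\beta}\leq 0$ give $\tilde L(H)(z_{0},t_{0})\geq 0$. Differentiating \eqref{eqmain} once in $z_{k}$ and in $\bar z_{k}$ yields $\tilde L(u_{k})=f_{z_{k}}+f_{u}u_{k}$ and its conjugate. Combined with the Leibniz identity for $\tilde L$ applied to $\sum_{k}u_{k}u_{\bar k}$, this gives
\[
\tilde L(|\nabla u|^{2})=2f_{u}|\nabla u|^{2}+2\operatorname{Re}\sum_{k}f_{z_{k}}u_{\bar k}-\sum u^{\alpha\bar\beta}\sum_{k}\bigl(u_{k\alpha}\overline{u_{k\beta}}+u_{k\bar\beta}u_{\bar k\alpha}\bigr),
\]
the sum-of-squares term being crucial and negative. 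Meanwhile $\tilde L(\phi(u))=\phi'(u)(\dot u-n)-\phi''(u)\sum u^{\alpha\bar\beta}u_{\alpha}u_{\bar\beta}$, where $|\dot u|\leq mC_{2}/2$ by Proposition~\ref{t}. Using $\nabla H(z_{0},t_{0})=0$ to eliminate the first-order cross term coming from $\log(|\nabla u|^{2}+1)$, and choosing $A,B$ large enough (only in terms of $\Omega,m,T,C_{f},C_{1},C_{2},C_{3}$), I expect to reach an inequality of the form
\[
0\leq\tilde L(H)(z_{0},t_{0})\leq C-c\bigl(t_{0}-\tfrac{2}{m}\bigr)|\nabla u|^{2}(z_{0},t_{0}),
\]
which forces $|\nabla u|(z_{0},t_{0})$ to be bounded. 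Hence $H$ is bounded on the whole cylinder and, on $\Omega\times(\epsilon,T']$, the factor $t-2/m\geq\epsilon-2/m>0$ converts this bound into $|\nabla u|\leq C_{4}$; letting $T'\nearrow T$ completes the argument.

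The main obstacle will be the algebra of the interior-maximum step, namely balancing the ``good'' sum-of-squares term and the convexity term $-\phi''(u)\sum u^{\alpha\bar\beta}u_{\alpha}u_{\bar\beta}$ against the ``bad'' contributions $2\operatorname{Re}\sum f_{z_{k}}u_{\bar k}$, $\phi'(u)(\dot u-n)$ and, above all, a term $\sum u^{\alpha\bar\beta}=\operatorname{tr}(u^{\alpha\bar\beta})$ that appears naturally in $\tilde L(\phi)$. To control this last trace one uses the known two-sided bound on $\det(u_{\alpha\bar\beta})$, which follows from the equation \eqref{eqmain} together with the bounds on $u$, $\dot u$ and $f$, via the arithmetic--geometric mean inequality; this allows $\phi''$ to be chosen large enough to absorb all the bad terms uniformly in the stated parameters.
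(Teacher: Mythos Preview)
Your outline follows the right template (a B{\l}ocki-type barrier, interior maximum, first-order condition, etc.), but there is a genuine gap at the decisive step: the inequality you aim for, $0\le\tilde L(H)(z_0,t_0)\le C-c(t_0-\tfrac{2}{m})|\nabla u|^2$, is not obtainable from your auxiliary function. After diagonalising $(u_{\alpha\bar\beta})$ with eigenvalues $\lambda_k$, the only ``good'' negative contributions you produce are the third-order sum-of-squares and the convexity term $-\phi''(u)\sum_k|u_k|^2/\lambda_k$. Neither of these dominates $|\nabla u|^2=\sum_k|u_k|^2$: if some $\lambda_k$ is very large (which is not excluded, since only $\prod_k\lambda_k=e^{\dot u-f}$ is bounded at this stage), then $\sum_k|u_k|^2/\lambda_k$ can be small while $|\nabla u|$ is huge. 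Also, contrary to what you write, no trace $\sum u^{\alpha\bar\alpha}$ appears in $\tilde L(\phi(u))=\phi'(u)(\dot u-n)-\phi''(u)\sum u^{\alpha\bar\beta}u_\alpha u_{\bar\beta}$; and even if it did, the two-sided bound on $\det(u_{\alpha\bar\beta})$ together with AM--GM only yields a \emph{lower} bound $\sum_k 1/\lambda_k\ge n(\det)^{-1/n}$, never the upper bound you would need to absorb it.

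What is missing is precisely the device the paper borrows from B{\l}ocki: an extra additive term $\eta|z|^2$ in the barrier. Applying $\tilde L$ to $\eta|z|^2$ produces $-\eta\sum_k 1/\lambda_k$, a new good term that controls the \emph{inverse} eigenvalues. The paper's barrier is $\log|\nabla u|^2+\gamma(u)+n\log(t-\tfrac{2}{m})+\eta|z|^2$; at an interior maximum one obtains simultaneously $\sum_k|u_k|^2/\lambda_k\le C$ and $\sum_k1/\lambda_k\le C$. These two bounds together with $\prod_k\lambda_k\le C$ give $\lambda_j=(\prod_k\lambda_k)\prod_{l\ne j}\lambda_l^{-1}\le C\bigl(\sum_k\lambda_k^{-1}\bigr)^{n-1}\le C'$, and then $|\nabla u|^2=\sum_k|u_k|^2\le(\max_j\lambda_j)\sum_k|u_k|^2/\lambda_k\le C''$. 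Note also that the paper uses an \emph{additive} time weight $n\log(t-\tfrac{2}{m})$ rather than your multiplicative $(t-\tfrac{2}{m})$; this is arranged so that the $g'(t)$ appearing in $\tilde L$ and the $g(t)$ in the value of the barrier cancel in the end. Once you insert the $\eta|z|^2$ term and adjust the time weight accordingly, your scheme becomes the paper's proof.
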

 
 \begin{proof}
 We will use the technique of Blocki as in \cite{Blo08}. In this proof only, we denote\\
 \begin{center}
 $g(t)=n\log (t-\dfrac{2}{m})$,\\
 $\gamma (u)=Au-Bu^2\;\;$ where $\;\; A=\dfrac{1}{10C_1}, B=\dfrac{1}{20C_1^2}$,\\
 $\eta=\dfrac{1}{4(\mbox{{\rm diam}} \Omega)^2}$,\\
 $\phi=\log|\nabla u|^2+\gamma (u)+g(t) +\eta|z|^2$,
 \end{center}
 and we assume that $0\in\Omega$.
 
  Let $\epsilon<T'<T$, we will prove that 
 $$
 \sup\limits_{\Omega\times (\frac{2}{m},T')}\phi\leq \tilde{C}_4,
 $$
 where $\tilde{C}_4$ depends on $\Omega,C_1,C_2,C_3,m, T, C_f$.
 
 Notice that the hypotheses and previous bounds on $|u|$ imply that,
 for $ t\in (\frac{2}{m},T'),$
\begin{equation}
\label{philessu}
 \exp \phi(z,t) \le |\nabla u (z,t)|^2 (t-\dfrac{2}{m})^n 
 \exp\left( \max_{\Omega\times (\frac{2}{m},T')} \gamma (u) +\eta \max_\Omega |z| \right)
 \le C |\nabla u|^2, 
\end{equation}
and in a similar way
$$
 |\nabla u (z,t)|^2 \le C (\epsilon-\dfrac{2}{m})^{-n}  \exp \phi(z,t)
 \le C_\epsilon  \exp \phi(z,t),  \quad t\in (\epsilon,T'),
 $$
 so the bound on $\phi$ yields a bound on $|\nabla u (z,t)|$.
 
Suppose that
 $$
 \sup\limits_{\Omega\times (\frac{2}{m},T')}\phi=\phi (z_0,t_0).
 $$
 
 By an orthogonal change of coordinates, we can assume that $(u_{\alpha\bar{\beta}}(z_0,t_0))$ is 
 diagonal. For convenience, we denote $u_{\alpha\bar{\alpha}}(z_0,t_0)=\lambda_{\alpha}$.
 
 We also denote by $\mathcal{L}$ the operator
 $$
 \mathcal{L}=\dfrac{\partial}{\partial t}-\sum u^{\alpha\bar{\beta}}
 \dfrac{\partial^2}{\partial z_{\alpha}\partial\bar{z}_{\beta}}.
 $$
 
  If $|\nabla u|^2(z_0,t_0)\leq C$, by \eqref{philessu}, we are done. In particular,
if $z_0 \in \partial \Omega$, we know that $ |\nabla u (z,t)|$ is bounded.
So we may restrict attention to the case 
 where  $|\nabla u|^2(z_0,t_0)>1$
 and $(z_0,t_0)\in\Omega\times (\dfrac{2}{m},T']$. 
 Then
 $\mathcal{L}(\phi)|_{(z_0,t_0)}\geq 0$.

 We compute
 \begin{flushleft}
 $\begin{array}{ll}
 \mathcal{L}(\phi)&=\mathcal{L}(\log|\nabla u|^2)+\gamma'(u).\dot{u}+g'(t)
 -\gamma'(u)\sum u^{\alpha\bar{\beta}}u_{\alpha\bar{\beta}}\\[8pt]
 &-\gamma''(u)\sum u^{\alpha\bar{\beta}}u_{\alpha}u_{\bar{\beta}}
 -\eta\sum u^{\alpha\bar{\alpha}}\\[8pt]
 &=\mathcal{L}(\log|\nabla u|^2)+\gamma'(u).(\dot{u}-n)+g'(t)\\[8pt]
 &-\gamma''(u)\sum u^{\alpha\bar{\beta}}u_{\alpha}u_{\bar{\beta}}
 -\eta\sum u^{\alpha\bar{\alpha}}.\\[8pt]
 \end{array}$
 \end{flushleft}
 When $|\nabla u|\neq 0$, we have
 \begin{flushleft}
 $\begin{array}{ll}
 (\log |\nabla u|^2)_{\alpha\bar{\beta}}
 &=\dfrac{|\nabla u|^2_{\alpha\bar{\beta}}}{|\nabla u|^2}
 -\dfrac{|\nabla u|^2_{\alpha}|\nabla u|^2_{\bar{\beta}}}{|\nabla u|^4}\\
 &=\dfrac{\langle\nabla u_{\alpha\bar{\beta}}, \nabla u\rangle}{|\nabla u|^2}
 +\dfrac{\langle\nabla u, \nabla u_{\beta\bar{\alpha}}\rangle}{|\nabla u|^2}
 +\dfrac{\langle\nabla u_{\alpha}, \nabla u_{\beta}\rangle}{|\nabla u|^2}\\
 &+\dfrac{\langle\nabla u_{\bar{\beta}}, \nabla u_{\bar{\alpha}}\rangle}{|\nabla u|^2}
 -\dfrac{|\nabla u|^2_{\alpha}|\nabla u|^2_{\bar{\beta}}}{|\nabla u|^4}.
 \end{array}$
 \end{flushleft}
 \begin{flushleft}
 $\begin{array}{ll}
 \mathcal{L}(\log|\nabla u|^2)&=
 \dfrac{\langle\nabla\dot{u},\nabla u\rangle
 -\sum\langle u^{\alpha\bar{\beta}}\nabla u_{\alpha\bar{\beta}},\nabla u\rangle}{|\nabla u|^2}
 +\dfrac{\langle\nabla u,\nabla \dot{u}\rangle
 -\sum \langle\nabla u,u^{\beta\bar{\alpha}}\nabla u_{\beta\bar{\alpha}}\rangle}{|\nabla u|^2}\\[8pt]
 &-\sum u^{\alpha\bar{\beta}}\dfrac{\langle\nabla u_{\alpha},\nabla u_{\beta}\rangle+
 \langle\nabla u_{\bar{\beta}},\nabla u_{\bar{\alpha}} \rangle}{|\nabla u|^2}
 +\sum u^{\alpha\bar{\beta}}\dfrac{(|\nabla u|^2)_{\alpha}(|\nabla u|^2)_{\bar{\beta}}}{|\nabla u|^4}.\\
 \end{array}$
 \end{flushleft}
 %%%HELP THE READER: give the formula for $\nabla \log \det (u_{\alpha \bar \beta})$
 We have, by \eqref{eqmain},
  \begin{flushleft}
 $\begin{array}{ll}
 \mathcal{L}(\log|\nabla u|^2)|_{(z_0,t_0)}
 &= 2Re\left(\dfrac{\langle\nabla u, \nabla f\rangle}{|\nabla u|^2}\right)
 +2f_u(t,z,u) |\nabla u|^2
 -\sum\dfrac{|\nabla u_k|^2+|\nabla u_{\bar{k}}|^2}{\lambda_k|\nabla u|^2}\\[8pt]
 &+\sum\dfrac{(|\nabla u|^2)_k(|\nabla u|^2)_{\bar{k}}}{\lambda_k |\nabla u|^4}\\[8pt]
 &\leq \dfrac{2|\nabla f|}{|\nabla u|}+\sum\dfrac{(|\nabla u|^2)_k(|\nabla u|^2)_{\bar{k}}}{\lambda_k |\nabla u|^4}.\\[8pt]
 \end{array}$
 \end{flushleft}
 Hence, there exists $C_4^{'}=C_4^{'}(m,C_1,C_2,C_f)$ such that\\
 \begin{flushleft}
 $\begin{array}{ll}
 \mathcal{L}(\phi)|_{(z_0,t_0)}&\leq C_4^{'}+g'(t)-\gamma''(u)\sum\dfrac{|u_k|^2}{\lambda_k}-\eta\sum\dfrac{1}{\lambda_k}
 +\sum\dfrac{(|\nabla u|^2)_k(|\nabla u|^2)_{\bar{k}}}{\lambda_k |\nabla u|^4}.\\[8pt]
 \end{array}$
 \end{flushleft}
 By the condition $\frac{\partial\phi}{\partial z_k}(z_0,t_0)=0$, we have\\
 $$
 \dfrac{(|\nabla u|^2)_k(|\nabla u|^2)_{\bar{k}}}{ |\nabla u|^4}
 =|\gamma'(u)u_k+\eta\bar{z}_k|^2\leq 2(\gamma'(u))^2|u_k|^2+2\eta^2|z_k|^2
 \leq 2(\gamma'(u))^2|u_k|^2+\dfrac{\eta}{2},
 $$
 where $(z,t)=(z_0,t_0).$\\
 Then
 \begin{flushleft}
 $\begin{array}{ll}
 0\leq\mathcal{L}(\phi)|_{(z_0,t_0)}&\leq C_4^{'}+g'(t)
 +(2(\gamma'(u))^2-\gamma''(u))\sum\dfrac{|u_k|^2}{\lambda_k}-\dfrac{\eta}{2}\sum\dfrac{1}{\lambda_k}\\[8pt]
 &\leq C_4^{'}+g'(t) -a(\sum\dfrac{|u_k|^2}{\lambda_k}+\sum\dfrac{1}{\lambda_k}),\\[8pt]
 \end{array}$
 \end{flushleft}
 where $a:=\min \{2B-(A+BC_1), \dfrac{\eta}{2}\}$.
 Hence, at $(z_0,t_0)$\\
 \begin{equation}
 \label{blocki}
 \sum\dfrac{|u_k|^2}{\lambda_k}+\sum\dfrac{1}{\lambda_k}\leq \dfrac{1}{a}(C_4^{'}+g'(t)).%\\[8pt]
 \end{equation}
 Moreover, by Proposition \ref{t} and by \eqref{sup u},  there exists $C_4^{''}=C_4^{''}(m,C_1,C_2)$ such that
 \begin{equation}
 \label{tich}
 \lambda_1\lambda_2...\lambda_n=\det (u_{\alpha\bar{\beta}})=e^{\dot{u}-f(t,z,u)}\leq C_4^{''}.
 \end{equation}
 By \eqref{blocki} and \eqref{tich}, there exists $C_4^{'''}=C_4^{'''}(a,C_4^{'},C_4^{''})$ such that\\
 \begin{center}
 $\lambda_k=\prod\lambda_j\prod\limits_{l\neq k}\dfrac{1}{\lambda_l}\leq (C_4^{'''}+g'(t_0))^{n-1}\;\;$ for 
 $\;\;k=1,...,n.$\\[8pt]
 $|\nabla u|^2=\sum|u_k|^2\leq ((C_4^{'''}+g'(t_0))^n\;\;$ for $\;\; (z,t)=(z_0,t_0).$\\
 \end{center}
 Then
 \begin{flushleft}
 $\begin{array}{ll}
 \phi (z_0,t_0)&\leq n\log(C_4^{'''}+g'(t_0))+g(t_0)+\gamma(u(z_0,t_0))+\eta|z_0|^2\\[8pt]
 &\leq n\log(C_4^{'''}(t_0-\frac{2}{m})+n)+\gamma(u(z_0,t_0))+\eta|z_0|^2\\[8pt]
 &\leq \tilde{C}_4.\\[8pt]
 \end{array}$
 \end{flushleft}
 For $z\in\Omega, \frac{2}{m}<\epsilon<t<T'$, we have
 $$
 \log |\nabla u|^2\leq \tilde{C}_4-\gamma (u)-\eta|z|^2-g(t)\leq 2\log C_4,
 $$
 where $C_4>0$ depends on $\Omega,m, \epsilon, T, C_f,C_1,C_2,C_3$.
 \end{proof}
 \section{Higher order estimates}
 \label{order2}
 In this section, we prove that the second derivatives of $u$ are bounded on $\partial\Omega\times (\epsilon,T)$. Then we use the maximum principle to show that the Laplacian of 
 $u$ is bounded on $\Omega\times (\epsilon,T)$. For convenience, we denote
 $\underline{u}:=\underline{u}_m$, $M:=M_m$, where $\frac{1}{2m}<\epsilon\leq \frac{1}{2m-1}$
 and $u_m, M_m$ are defined as in \ref{subsolution}.\\
 \subsection{Localisation technique}
 \begin{flushleft}
 In order to show that the second derivatives of $u$ are bounded on $\partial\Omega\times (\epsilon,T)$, we use a barrier function. The key to the construction is the following:
 \end{flushleft}
 \begin{Lem} \label{guan 98}
 We set
 $$v=(u-\underline{u})+a(h-\underline{u})-Nd^2,$$
 where $d$ is the distance from $\partial\Omega$, $h$ is defined as in the proof of Proposition 
 \ref{gra 1}  and $a, N$ are positive constants to be determined.
 Let $\epsilon\in (0,T)$. Then there exist $a, N, \delta >0$ depending only on $\Omega,\epsilon, T, C_u, C_{\varphi}, C_f$ such that
 \begin{center}
 $L(v)\geq\frac{1}{4}(1+\sum u^{\alpha\bar{\alpha}})\;\;$ on $\;\; U_{\delta}\times (\epsilon,T)$,\\[10pt]
 $v\geq 0\;\;$ on  $\;\; U_{\delta}\times (\epsilon,T)$,\\[6pt]
 \end{center}
 where $U_{\delta}=\{ z\in\Omega : d(z)\leq \delta\}$ .
 \end{Lem}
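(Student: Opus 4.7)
The plan is to expand $L(v)=L(u-\underline u)+a\,L(h-\underline u)-N\,L(d^2)$ by linearity, bound each piece from below, and then choose $a$, $N$, $\delta$ in this order so that the positive contribution $\sum u^{\alpha\bar\alpha}$ is preserved with coefficient at least $\tfrac12$, with any leftover constant absorbed either by largeness of $\sum u^{\alpha\bar\alpha}$ itself or by the positive quadratic $\sum u^{\alpha\bar\beta}d_\alpha d_{\bar\beta}$. The nonnegativity of $v$ is a separate Hopf-lemma-type computation: by Corollary~\ref{compa}, $u\ge\underline u$; writing $h-\underline u=(h-\varphi_m)-M\rho$, smoothness gives $|h-\varphi_m|\le C d$ while the strict plurisubharmonicity of $\rho$ together with $|\nabla\rho|\ne 0$ on $\partial\Omega$ yields $-\rho\ge c\,d$ near the boundary, so $h-\underline u\ge c'Md$ on a fixed tube once $M=M_m$ is large. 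Hence $v\ge ac'Md - Nd^2\ge 0$ whenever $\delta\le ac'M/N$.

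For the principal estimate on $L(u-\underline u)$, substituting $\dot u=\log\det u_{\alpha\bar\beta}+f(u)$ and the subsolution inequality $\dot{\underline u}\le\log\det\underline u_{\alpha\bar\beta}+f(\underline u)$ and using $\sum u^{\alpha\bar\beta}u_{\alpha\bar\beta}=n$ yields
\[
L(u-\underline u)\ge[\log\det u_{\alpha\bar\beta}-\log\det\underline u_{\alpha\bar\beta}]+[f(u)-f(\underline u)-f_u(u-\underline u)]-n+\sum u^{\alpha\bar\beta}\underline u_{\alpha\bar\beta}.
\]
On $\Omega\times(\epsilon,T)$, Proposition~\ref{t} bounds $|\dot u|$, hence $\log\det u_{\alpha\bar\beta}$ is bounded; the remaining bracketed terms are bounded by smoothness of $\underline u,f$ and by $|u-\underline u|\le C$. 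The built-in inequality $\underline u_{\alpha\bar\beta}\ge I$ then gives $L(u-\underline u)\ge\sum u^{\alpha\bar\alpha}-C_5$. An analogous computation produces $L(h-\underline u)\ge -C_h\sum u^{\alpha\bar\alpha}-C_6$ with $C_h\sim\|h\|_{C^2}$; and from $(d^2)_{\alpha\bar\beta}=2d\,d_{\alpha\bar\beta}+2d_\alpha d_{\bar\beta}$ one gets
\[
-N\,L(d^2)\ge -2N\delta C_d\sum u^{\alpha\bar\alpha}+2N\sum u^{\alpha\bar\beta}d_\alpha d_{\bar\beta}-NC_f\delta^2.
\]

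Summing the three estimates and first fixing $a\le 1/(4C_h)$, then (after $N$ is chosen below) $\delta\le 1/(8NC_d)$ yields $L(v)\ge\tfrac12\sum u^{\alpha\bar\alpha}+2N\sum u^{\alpha\bar\beta}d_\alpha d_{\bar\beta}-C_7$, where $C_7$ stays bounded once $N\delta^2$ is controlled. Set $K_0:=1+4C_7$. In the case $\sum u^{\alpha\bar\alpha}\ge K_0$, the first term already dominates $\tfrac14(1+\sum u^{\alpha\bar\alpha})$. Otherwise all eigenvalues $\lambda_k$ of $(u_{\alpha\bar\beta})$ satisfy $\lambda_k\ge 1/K_0$, and the upper bound $\det(u_{\alpha\bar\beta})=e^{\dot u-f}\le C_{\det}$ coming from Proposition~\ref{t} forces $\lambda_k\le C_{\det}K_0^{n-1}$, so the eigenvalues of $(u^{\alpha\bar\beta})$ are bounded below by $c_*:=1/(C_{\det}K_0^{n-1})>0$. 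Since $\sum|d_\alpha|^2=\tfrac14|\nabla d|^2=\tfrac14$ near $\partial\Omega$, one has $\sum u^{\alpha\bar\beta}d_\alpha d_{\bar\beta}\ge c_*/4$, and choosing $N$ large enough that $Nc_*/2\ge\tfrac14(1+K_0)+C_7$ finishes this case.

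The main obstacle is the bookkeeping between the constants: $C_7$ contains a summand $NC_f\delta^2$, so the coupling $\delta\le 1/(8NC_d)$ keeps $NC_f\delta^2\le C_f/(64NC_d^2)$ bounded independently of $N\ge 1$, which breaks the apparent circularity and lets $N$ be chosen purely from the data. Compatibility with the nonnegativity constraint $\delta\le ac'M/N$ is then automatic upon possibly shrinking $\delta$. With this ordering --- $a$ first, then $N$, then $\delta$ --- all three constants depend only on $\Omega,\epsilon,T,C_u,C_\varphi,C_f$, as required.
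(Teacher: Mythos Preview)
Your argument is correct and follows the same overall scheme as the paper: expand $L(v)$ by linearity, extract the positive term $\sum u^{\alpha\bar\alpha}$ from the inequality $\underline u_{\alpha\bar\beta}\ge \delta_{\alpha\bar\beta}$, retain the nonnegative quadratic $2N\sum u^{\alpha\bar\beta}d_\alpha d_{\bar\beta}$, and then choose $a$, $N$, $\delta$ so that the bad coefficients together eat at most $\tfrac12\sum u^{\alpha\bar\alpha}$. The two substantive differences are purely technical. First, to absorb the leftover additive constant the paper does \emph{not} split cases: it bounds $2N\sum u^{\alpha\bar\beta}d_\alpha d_{\bar\beta}\ge N\lambda_n^{-1}$, pairs this with $\tfrac14\sum u^{\alpha\bar\alpha}=\tfrac14\sum\lambda_k^{-1}$, and applies the arithmetic--geometric mean inequality together with the upper bound $\prod\lambda_k\le e^{\dot u-f}\le C$ to obtain $\tfrac14\sum\lambda_k^{-1}+N\lambda_n^{-1}\ge C_6\,N^{1/n}$; a single choice of $N$ then kills $C_5$. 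Your dichotomy on the size of $\sum u^{\alpha\bar\alpha}$ reaches the same conclusion and is a perfectly valid alternative, at the cost of a second subcase. Second, for $v\ge 0$ the paper argues from $\Delta(h-\underline u)\le -n$ via comparison with the fixed solution of $\Delta g=-n$, $g|_{\partial\Omega}=0$, which yields $h-\underline u\ge C_7\,d$ with $C_7=C_7(\Omega)$ and no appeal to the magnitude of $M$; your decomposition $(h-\varphi)-M\rho$ and the claim $h-\underline u\ge c'Md$ are correct but rely on $M=M_m$ exceeding a threshold depending on $\|h-\varphi\|_{C^1}$, which is harmless (one may always enlarge $M_m$ in Section~\ref{subsolution}) but is an extra bookkeeping assumption the paper's route avoids.
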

 \begin{proof}
 The elliptic version of this lemma was proved by \cite{Gua98} (page 5-7). The same 
 arguments can be applied for the parabolic case. For the reader's convenience, we
 recall the arguments here.\\
 We have
 $$
 L(v)=\dot{v}-n+\sum u^{\alpha\bar{\beta}}\underline{u}_{\alpha\bar{\beta}}-
 a\sum u^{\alpha\bar{\beta}} (h_{\alpha\bar{\beta}}-\underline{u}_{\alpha\bar{\beta}})
 +2N\sum u^{\alpha\bar{\beta}}(dd_{\alpha\bar{\beta}}+d_{\alpha}d_{\bar{\beta}})
 -f_u(t,z,u)v.
 $$
 Fix $\tilde{\delta}>0$ satisfying $d\in C^{\infty}(U_{\tilde{\delta}})$. Assume that $0<a<1$ and 
 $0<\delta<\tilde{\delta}$ and $0<N< \frac{1}{\delta}$. Then there exists $C_5>0$ depending on 
 $\Omega,\tilde{\delta}, \epsilon,T, C_{\varphi}, C_f, M, C_1, C_2$ such that
 \begin{center}
 $\dot{v}-n-f_u(t,z,u)v\geq -C_5$,\\
 $-a\sum u^{\alpha\bar{\beta}} (h_{\alpha\bar{\beta}}-\underline{u}_{\alpha\bar{\beta}})\geq 
 -C_5a\sum u^{\alpha\bar{\alpha}}$,\\
 $2Nd\sum u^{\alpha\bar{\beta}}d_{\alpha\bar{\beta}}\geq -C_5N\delta\sum
  u^{\alpha\bar{\alpha}},$
 \end{center}
 where $(z,t)\in U_{\delta}\times (\epsilon,T)$.
 
 Then
 $$
 L(v)\geq \sum u^{\alpha\bar{\beta}}\underline{u}_{\alpha\bar{\beta}}-C_5-C_5(a+N\delta)
 \sum u^{\alpha\bar{\alpha}}+2N\sum u^{\alpha\bar{\beta}}d_{\alpha}d_{\bar{\beta}},
 $$
 where $(z,t)\in U_{\delta}\times (\epsilon,T)$.
 
 When $a+N\delta\leq \frac{1}{4C_5}$, we obtain
 $$
 L(v)\geq \dfrac{3}{4}\sum u^{\alpha\bar{\alpha}}-C_5+2N
 \sum u^{\alpha\bar{\beta}}d_{\alpha}d_{\bar{\beta}},
  $$
  where $(z,t)\in U_{\delta}\times (\epsilon,T)$.
  
 Let $\lambda_1\leq \lambda_2\leq ...\leq\lambda_n$ be the eigenvalues of $\{
  u_{\alpha\bar{\beta}}\}$.   We have
  \begin{center}
  $\sum u^{\alpha\bar{\beta}}d_{\alpha}d_{\bar{\beta}}\geq \lambda_n^{-1}\sum d_{\alpha}d_{\bar{\alpha}}\geq \dfrac{\lambda_n^{-1}}{2}\;\;$ on $\;\; U_{\delta}\times (\epsilon,T)$.
  \end{center}
  By the inequality for arithmetic and geometric means
  \begin{center}
$\dfrac{1}{4}\sum u^{\alpha\bar{\alpha}}+N\lambda_n^{-1}
\geq n (\dfrac{1}{4})^{(n-1)/n}N^{1/n}(\lambda_1...\lambda_n)^{-1/n}\geq C_6N^{1/n}$,
\end{center} 
where $C_6>0$ depends on $\epsilon,T,C_f, C_1,C_2$.\\
  When $N>(\frac{C_5+1}{C_6})^n$, we have
  \begin{center}
  $L(v)\geq \dfrac{1}{2}(2+\sum u^{\alpha\bar{\alpha}})$.
  \end{center}
  Next, since $\Delta\underline{u}\geq n$, there exists $C_7>0$ depending only on $\Omega$ such that
  \begin{center}
  $(h-\underline{u})\geq C_7 d\;\;$ on $\;\; \Omega\times (\epsilon, T)$.
  \end{center}
  Fix $0<a, \delta <1$, $N>0$ so that
  \begin{itemize}
   \item $N>\left(\dfrac{C_5+1}{C_6}\right)^n$;\\
  \item $a \leq \dfrac{1}{8C_5}$;\\
  \item $0<\delta<\tilde{\delta}$;\\
  \item $\min\{aC_7,a\}\geq N\delta$.
  \end{itemize}
  We obtain
  \begin{center}
 $L(v)\geq\frac{1}{4}(1+\sum u^{\alpha\bar{\alpha}})\;\;$ on $\;\; U_{\delta}\times (\epsilon,T)$,
 \\[10pt]
 $v\geq 0\;\;$ on  $\;\; U_{\delta}\times (\epsilon,T)$.\\[6pt]
 \end{center}
 \end{proof}
 
 \subsection{$\mathcal{C}^2$-a priori estimates on the boundary}
 
 \begin{Lem} \label{bou 11}
 Let $\epsilon\in (0,T)$. Then there exists $c_{\epsilon} >0$ depending only on $\Omega, \epsilon, T, C_u, C_{\varphi},C_f$ such that
 \begin{center}
 $(dd^cu)|_{T^h_{\partial\Omega}}\geq c_{\epsilon}(dd^c|z|^2 )|_{T^h_{\partial\Omega}}$,
 \end{center}
 where $T^h_{\partial\Omega}$ is the holomorphic tangent bundle of $\partial\Omega$.
 \end{Lem}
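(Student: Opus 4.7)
The plan is to differentiate the boundary relation $u = \U$ in complex tangential directions and extract a formula for the tangential complex Hessian of $u$ at a boundary point; the remaining normal-derivative term is then controlled via Hopf's lemma together with the barrier of Lemma~\ref{guan 98}.

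Fix $(z_0, t_0) \in \partial\Omega \times (\epsilon, T)$; by the choice of $m$ we have $u = \U$ on $\partial\Omega$ at time $t_0$. Choose local holomorphic coordinates near $z_0$ so that $\rho_\alpha(z_0) = 0$ for $\alpha < n$ and $\rho_n(z_0)$ is real and positive; then $T^h_{\partial\Omega, z_0} = \{\xi \in \mathbb{C}^n : \xi_n = 0\}$. The complex vector fields $T_\alpha = \partial_{z_\alpha} - (\rho_\alpha/\rho_n)\partial_{z_n}$, $\alpha < n$, are tangent to $\partial\Omega$, so both $\bar T_\beta(u - \U)$ and then $T_\alpha\bar T_\beta(u - \U)$ vanish on $\partial\Omega$. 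Evaluating the latter at $z_0$ (where $T_\alpha = \partial_{z_\alpha}$) and using $\rho_\alpha(z_0) = 0$ yields the standard identity
\[
(u - \U)_{\alpha\bar\beta}(z_0, t_0) = \frac{\rho_{\alpha\bar\beta}(z_0)}{\rho_n(z_0)}\, B,\qquad \alpha, \beta < n,
\]
where $B := (u-\U)_{\bar n}(z_0, t_0)$ is real (indeed $\partial_{y_n}$ is tangential at $z_0$ since $\rho_n$ is real, so $\partial_{y_n}(u-\U)(z_0) = 0$). Contracting with tangential $\xi$, using $\sum\U_{\alpha\bar\beta}\xi_\alpha\bar\xi_\beta \geq |\xi|^2$ from Section~\ref{subsolution} together with $c_\rho|\xi|^2 \leq L(\xi) := \sum\rho_{\alpha\bar\beta}(z_0)\xi_\alpha\bar\xi_\beta \leq C_\rho|\xi|^2$ by strict pseudoconvexity:
\[
\sum u_{\alpha\bar\beta}\xi_\alpha\bar\xi_\beta = \sum\U_{\alpha\bar\beta}\xi_\alpha\bar\xi_\beta + \frac{B}{\rho_n(z_0)}L(\xi) \geq |\xi|^2 + \frac{B}{\rho_n(z_0)}L(\xi).
\]

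Since $u - \U \geq 0$ in $\Omega$ and vanishes on $\partial\Omega$, Hopf's lemma gives $B \leq 0$. To obtain the quantitative bound on $|B|$ required to make the second term on the right dominate by less than $1$, invoke the barrier $v = (u-\U) + a(h-\U) - Nd^2$ from Lemma~\ref{guan 98}: since $v$ is non-negative on $U_\delta \times (\epsilon, T)$, vanishes at $z_0$, and is a supersolution for the linearized operator $L$ (owing to $L(v) \geq \tfrac14(1+\sum u^{\alpha\bar\alpha})$), a standard parabolic maximum principle comparison between $u - \U$ and the combination $-a(h-\U) + Nd^2$ yields a uniform bound $|B| \leq B_0$, with $B_0$ depending only on $\Omega, \epsilon, T, C_u, C_\varphi, C_f$. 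Choosing the parameters $a, N, \delta$ in Lemma~\ref{guan 98} small enough, subject to the constraints in its proof, guarantees $B_0 C_\rho < \rho_n(z_0)$, and thence
\[
\sum u_{\alpha\bar\beta}\xi_\alpha\bar\xi_\beta \geq \Bigl(1 - \frac{B_0 C_\rho}{\rho_n(z_0)}\Bigr)|\xi|^2 =: c_\epsilon\,|\xi|^2,
\]
with $c_\epsilon > 0$ as required.

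The main obstacle is the quantitative Hopf-type estimate $|B| \leq B_0$ with $B_0 < \rho_n(z_0)/C_\rho$. The sign $B \leq 0$ comes for free from Hopf's lemma, but sharpening this to the quantitative bound requires exploiting the supersolution property of $v$ from Lemma~\ref{guan 98} and carefully coordinating the barrier parameters $a, N, \delta$ with the subsolution parameter $M$.
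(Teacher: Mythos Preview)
Your approach is genuinely different from the paper's, and it contains a real gap at exactly the point you yourself flag as the ``main obstacle.''

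The identity $(u-\U)_{\alpha\bar\beta}(z_0,t_0)=\dfrac{\rho_{\alpha\bar\beta}(z_0)}{\rho_n(z_0)}\,B$ is correct, and so is your inequality
\[
\sum u_{\alpha\bar\beta}\xi_\alpha\bar\xi_\beta \;\ge\; |\xi|^2+\frac{B}{\rho_n(z_0)}L(\xi),\qquad B\le 0.
\]
The problem is the claim that Lemma~\ref{guan 98} can be tuned to force $|B|\,C_\rho<\rho_n(z_0)$. It cannot. From $v=(u-\U)+a(h-\U)-Nd^2\ge 0$ you only get, upon taking the normal derivative at $z_0$, that $(u-\U)_\nu\ge -a(h-\U)_\nu$, which is weaker than the inequality $(u-\U)_\nu\ge 0$ you already have from $u\ge\U$. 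The barrier $v$ gives no \emph{upper} bound on $(u-\U)_\nu$; the only upper control available is $u\le h$, yielding $(u-\U)_\nu\le (h-\U)_\nu$, and the latter is of order $M$ since $\U=\varphi+M\rho$. Moreover the constraints in the proof of Lemma~\ref{guan 98} force $N$ to be \emph{large}, not small, so you cannot shrink the parameters freely.

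A cleaner way to see the obstruction: substitute $\U=\varphi+M\rho$ into your identity. The $M$-terms cancel exactly, leaving
\[
\sum_{\alpha,\beta<n} u_{\alpha\bar\beta}\xi_\alpha\bar\xi_\beta
=\sum_{\alpha,\beta<n}\varphi_{\alpha\bar\beta}\xi_\alpha\bar\xi_\beta
+\frac{(u-\varphi)_n(z_0,t_0)}{\rho_n(z_0)}\,L(\xi).
\]
So what you actually need is a \emph{quantitative positive lower bound} on $(u-\varphi)_n/\rho_n$ at the boundary (for the outward normal), and $u-\varphi$ has no sign in $\Omega$, so no Hopf-type argument applies. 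This is precisely as hard as the original statement; your reduction is circular.

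The paper proceeds quite differently: it builds an auxiliary potential $\tau$ with $dd^c\tau=dd^c\U$ and a carefully chosen boundary expansion, and then constructs a barrier $b$ whose complex Monge--Amp\`ere measure is \emph{small} (one summand is a sum of $n-1$ squares of holomorphic functions, hence has vanishing $(dd^c\,\cdot\,)^n$). The comparison principle for bounded plurisubharmonic functions then gives $b\ge \tau+u-\U$ near the boundary point, and differentiating this inequality in the normal direction produces the desired strict positivity of $u_{1\bar 1}$. The nonlinear Monge--Amp\`ere comparison, together with the lower bound $\det(u_{\alpha\bar\beta})=e^{\dot u-f}\ge r_3>0$ coming from Proposition~\ref{t}, is the essential input that your linearized barrier from Lemma~\ref{guan 98} cannot supply.
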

 We refer the reader to \cite[pp. 221--223]{CKNS85} or \cite[p. 268--271]{Bou11}  for related
 results in the elliptic case.
 
 \begin{proof} 
 Fix $p\in\partial\Omega$ . By an affine change of coordinates, we can assume that $p=0$ and there
 exists a neighbourhood $U$ of $p$ such that
 $$\Omega\cap U=\{z\in U: x_n>Re(\sum\limits_{1\leq j\leq k\leq n}a_{j\bar{k}}z_j\bar{z}_k+\sum\limits_{1\leq j\leq k \leq n}a_{jk}z_jz_k)+O(|z|^3)\},$$
where $a_{j\bar{k}}, a_{jk}\in\C$ with $a_{1\bar{1}}>0$.

By a holomorphic change of coordinates, we can assume that 
\begin{equation}\label{nearpholo.bou11}
\Omega\cap U=\{z\in U: x_n>Re(\sum\limits_{1\leq j\leq k\leq n}a_{j\bar{k}}z_j\bar{z}_k)+O(|z|^3)\},
\end{equation}
 where $a_{j\bar{k}}$ with $a_{1\bar{1}}>0$.
 
  We need to show that
 $$
 u_{1\bar{1}}(p,t)\geq C_{\epsilon},
 $$
 where $t\in (\epsilon,T)$ and $C_{\epsilon}>0$ depends on $\Omega, \epsilon, T, C_u, C_{\varphi},C_f$.
 
 \textit{Step 1: Choice of a K\"ahler potential.}\\
  We construct a function $\tau\in C^{\infty}(\Omega_r\times (\epsilon , T))$ 
  depending on $\U ,\epsilon , T, \Omega$ so that $dd^c\tau =dd^c\U$ and $\tau 
  (p,t)=0$ and
  $$
  \tau |_{(\partial\Omega\cap B_r)\times (\epsilon,T)}=
  Re\left(\sum\limits_{j=2}^n c_jz_1\bar{z}_j\right) + 
  O\left( |z_2|^2+...+|z_n|^2\right),
  $$
  where $r>0$, $B_r=B_r(p)$, $\Omega_r=\Omega\cap B_r$ and $c_j\in C^{\infty}([\epsilon, T), \C)$.\\
Indeed, by Taylor's formula,
\begin{flushleft}
$\begin{array}{ll}
\U(z,t)-\U(p,t)&=Re(\sum\limits_{j=1}^nb_jz_j)+Re(\sum\limits_{j=2}^nb_{1\bar{j}}z_1\bar{z}_j)
+b_{1\bar{1}}|z_1|^2+Re(\sum\limits_{j=1}^nb_{1j}z_1z_j)\\
&+O(|z_2|^2+...+|z_n|^2)+O(|z|^3),
\end{array}$
\end{flushleft}
where $b_j,b_{1j},b_{1\bar{j}}\in\C^{\infty}([\epsilon,T),\C )$,
$b_{1\bar{1}}=\U_{1\bar{1}}(p,t)>0$.

Furthermore, near $p$ on $\partial\Omega$, we have by \eqref{nearpholo.bou11}
\begin{equation}\label{xn}
x_n=Re(\sum\limits_{j=2}^na_{1\bar{j}}z_1\bar{z}_j)
+a_{1\bar{1}}|z_1|^2+O(|z_2|^2+...+|z_n|^2)+O(|z|^3),
\end{equation}
where $a_{1\bar{j}}\in\C$ with $a_{1\bar{1}}>0$.\\
Define
$$
\tau(z,t)=\U (z,t)-\U (p,t)-Re(\sum\limits_{j=1}^nb_jz_j)
-\dfrac{b_{1\bar{1}}}{a_{1\bar{1}}}x_n
-Re(\sum\limits_{j=1}^nb_{1j}z_1z_j);
$$
then   $dd^c\tau =dd^c\U$ and $\tau (p,t)=0$ and
  $$
  \tau |_{(\partial\Omega\cap B_r)\times (\epsilon,T)}=Re\left(\sum\limits_{j=2}^n
   c_jz_1\bar{z}_j\right) + 
  O(|z_2|^2+...+|z_n|^2)+ \{\mbox{terms of order}\geq 3\}.
  $$
  Moreover, for $z\in\partial\Omega$, we have
  \begin{itemize}
  \item  For $j=2,...,n$\\
  \begin{equation}
  \label{zj2z1}
 |z_j|^2|z_1|=O(|z_2|^2+...+|z_n|^2);
 \end{equation}
  \item By \eqref{xn}
  \begin{flushleft}
  $\begin{array}{ll}
  |z_1|^4
  &=O(x_n^2)+O(\sum\limits_{j=2}^n|z_1|^2|z_j|^2)+O(|z|^6)+O((\sum\limits_{j=2}^n|z_j|^2)^2)\\
  &=O(|z_2|^2+...+|z_n|^2)+O(|z|^6);
  \end{array}$
  \end{flushleft}
  then
 \begin{equation}
 \label{z4}
 |z|^4=O(|z_2|^2+...+|z_n|^2);
 \end{equation}
  \item  For $j=2,...,n$
  \begin{equation}\label{z12zj}
  |z_1|^2|z_j|=O(|z_1|^4)+ O( |z_j|^2)=O(|z_2|^2+...+|z_n|^2).
  \end{equation}
  \end{itemize}
  Hence
  \begin{flushleft}
  $\begin{array}{ll}
  \tau |_{(\partial\Omega\cap B_r)\times (\epsilon,T)}
  &=Re (\sum\limits_{j=2}^n c_jz_1\bar{z}_j)
  +\sum\tilde{a}_jx_1^jy_1^{3-j}
  +O(|z_2|^2+...+|z_n|^2)\\
  &=Re(\sum\limits_{j=2}^n
   c_jz_1\bar{z}_j)  
  +Re(a_1z_1^3)+Re(a_2z_1|z_1|^2)+O(|z_2|^2+...+|z_n|^2),
   \end{array}$
  \end{flushleft}
  where $a_1,a_2\in C^{\infty}([\epsilon,T),\C)$ .\\
  Next, by \eqref{xn}, \eqref{zj2z1}, \eqref{z12zj}, for $z\in\partial\Omega$, we have
  \begin{flushleft}
  $\begin{array}{ll}
  Re(a_2z_1|z_1|^2)
  &=Re(\dfrac{a_2}{a_{1\bar{1}}}z_1x_n)+ 
  O(|z_2|^2+...+|z_n|^2)\\
  &=Re(c_0z_1\bar{z}_n)+Re(c_0z_1z_n)+
   O(|z_2|^2+...+|z_n|^2).
  \end{array}$
  \end{flushleft}
  Replacing the term  $c_n$ by $c_n-c_0$, we obtain
  \begin{flushleft}
  $\begin{array}{ll}
  \tau |_{(\partial\Omega\cap B_r)\times (\epsilon,T)}&=Re\left(\sum\limits_{j=2}^n
   c_jz_1\bar{z}_j\right) +Re(a_1z_1^3)+Re(c_0z_1z_n)+O(|z_2|^2+...+|z_n|^2).
  \end{array}$
  \end{flushleft}
  Replacing $\tau$ by $\tau+Re(a_1z_1^3)+Re(c_0z_1z_n)$, we obtain\\
  $$\tau |_{(\partial\Omega\cap B_r)\times (\epsilon,T)}=Re\left(\sum\limits_{j=2}^n
   c_jz_1\bar{z}_j\right)  
  +O(|z_2|^2+...+|z_n|^2).
  $$
 Therefore,
  \begin{equation}
  \label{taumaj}
  \tau |_{(\partial\Omega\cap B_r)\times (\epsilon,T)}\leq 
  Re\left(\sum\limits_{j=2}^n c_jz_1\bar{z}_j\right)  
  +a_3(|z_2|^2+...+|z_n|^2), \quad
  \sup\sum\limits_{j=2}^n|c_j|\leq a_4,
  \end{equation}
  where $a_3,a_4>0$ depend on $\Omega, \epsilon, T, M, C_{\varphi}$.\\
  The conditions $dd^c\tau =dd^c\U$ and $\tau (p,t)=0$ are still satisfied.\\
 \textit{Step 2: Choice of a barrier function.}\\
Recall that $\Omega_r=\Omega \cap B_r$. We construct a function 
\begin{equation}
\label{bdef}
b(z,t)=-\epsilon_1 x_n +\epsilon_2 |z|^2+\dfrac{1}{2\mu}\sum\limits_{j=2}^{n}
 |c_jz_1+\mu z_j|^2
\end{equation}
 such that $b\geq \tau + u-\U\;$ on $\; \Omega_r\times (\epsilon , T)$,
 where $r>0$ depends only on $\Omega$ and 
 $\epsilon_1, \epsilon_2, \mu>0$ depend on $\Omega, \epsilon, T, M, C_{\varphi},C_f$.\\
 Note that
 \begin{center}
 $|z_1|^2\leq\dfrac{1}{a_{1\bar{1}}} (x_n-Re(\sum\limits_{j=2}^na_{1\bar{j}}z_1\bar{z}_j))
+O(|z_2|^2+...+|z_n|^2)+O(|z|^3)$ on $\Omega$.
 \end{center}
 Since for $r_0$ small enough and $z\in \Omega_{r_0}$,we have $z\rightarrow 0$ as 
 $|z_2|^2+...+|z_n|^2\rightarrow 0$,
 if we fix $r>0$ small enough, then there exists $r_1>0$ such that
 \begin{center}
 $|z_2|^2+...+|z_n|^2\geq r_1\;\;$ for $\;\; z\in\partial B_r\cap\Omega$.
 \end{center}
 Assume that $0<\epsilon_1,\epsilon_2<1$. 
 Then there exists $\mu_1>0$ depending on $\Omega, M, C_{\varphi},
 C_1, a_3,a_4,r_1$ such that the function $b$ in \eqref{bdef} verifies
 \begin{flushleft}
 $\begin{array}{ll}
 b|_{(\partial B_r(p)\cap\Omega)\times [\epsilon, T)}
 &\geq \dfrac{\mu r_1}{2} + 
 Re(\sum\limits_{j=2}^n c_jz_1\bar{z}_j)-\epsilon_1x_n+\epsilon_2|z|^2\\[8pt]
 &\geq \dfrac{\mu_1 r_1}{2} + 
 Re(\sum\limits_{j=2}^n c_jz_1\bar{z}_j)-\epsilon_1x_n+\epsilon_2|z|^2\\[8pt]
 &\geq (\tau+u-\U)|_{(\partial B_r(p)\cap \Omega )\times [\epsilon,T)}\\
 \end{array}$
 \end{flushleft}
 when $\mu\geq\mu_1$.\\
 There exists $r_2>0$ such that, when $z \in \partial\Omega$, \\
 $$
 x_n= Re(\sum\limits_{j=1}^na_{1\bar{j}}z_1\bar{z}_j)+O(|z_2|^2+...+|z_n|^2)+O(|z|^3)\leq r_2|z|^2.
 $$
 Assume that $0<r_2\epsilon_1<\epsilon_2$. For $\mu\geq 2a_3$, 
 by \eqref{taumaj}, we have 
 \begin{flushleft}
 $\begin{array}{ll}
 b|_{(\partial\Omega\cap B_r(p))\times [\epsilon,T)}
 &\geq \dfrac{1}{2\mu}\sum\limits_{j=2}^n|c_jz_1+\mu z_j|^2\\[8pt]
 &\geq Re(\sum\limits_{j=2}^n c_jz_1\bar{z}_j)+\dfrac{\mu}{2}(|z_2|^2+...+|z_n|^2)\\[8pt]
 &\geq \tau|_{(\partial\Omega\cap B_r(p))\times [\epsilon,T)}\\[8pt]
 &\geq (\tau+u-\U)|_{(\partial\Omega\cap B_r(p))\times [\epsilon,T)} .
 \end{array}$
 \end{flushleft}
 Fix $\mu\geq \max(\mu_1, 2 a_3)$, we get
 $$
 b|_{\partial_P(\Omega_r\times [\epsilon,T))}\geq 
 (\tau+u-\U)|_{\partial \Omega_r\times [\epsilon,T)}.
 $$
 Next, by Proposition \ref{t} ,there exists $r_3>0$ such that
 \begin{center}
 $(dd^c(\tau-u-\U))^n=(dd^cu)^n=e^{\dot{u}-f(t,z,u)}\geq r_3\;\;$ on $\;\; \Omega_r\times[\epsilon,T).$
 \end{center}
 On the other hand
 $$
 (dd^c(\sum\limits_{j=2}^n|c_jz_1+\mu z_j|^2))^n=0,
 $$
 so $(dd^cb)^n=O(\epsilon_2)$ on $\Omega_r\times [\epsilon,T)$.\\
  Hence, there exists $\epsilon_2 >0$ depending on $\mu, \Omega, a_4,r_3$ such that
  \begin{center}
  $(dd^cb)^n\leq (dd^c(\tau+u-\U))^n$ on $\Omega_r\times [\epsilon, T)$.
  \end{center}
  When $b|_{\partial\Omega_r\times [\epsilon,T)}\geq (\tau+u-\U)|_{\partial\Omega_r\times [\epsilon,T)}$
  and $(dd^cb)^n\leq (dd^c(\tau +u-\U))^n$ on $\Omega_r\times [\epsilon, T)$, it follows from 
  the comparison theorem (for the bounded plurisubharmonic functions) that
  \begin{center}
  $b\geq (\tau+u-\U)\;\;$ on $\;\;\Omega_r\times [\epsilon ,T)$.\\
  \end{center}
  
 \textit{Step 3: Conclusion.}\\
 We have, since $b(p,t)= \tau(p,t)+ u(p,t)-\U (p,t)=0$, 
 $$
 -\epsilon_1=b_{x_n}(p,t)\geq \tau_{x_n}(p,t) + (u-\U)_{x_n}(p,t).
 $$
 Then, since $(u-\underline{u})|_{\partial \Omega\times (\epsilon, T)} \equiv 0$, 
 $$
 (u-\underline{u})_{1\bar 1} (p,t )= -(u-\underline{u})_{x_n} (p,t ) \rho_{1\bar 1} (p ),
 $$
 and by the explicit choice of $\tau$,  
  $-\tau_{x_n}(p,t) \rho_{1\bar 1} (p )=\tau_{1\bar{1}}(p,t)$, so
$$
u_{1\bar{1}}(p,t)= (\tau_{1\bar{1}}+u_{1\bar{1}}-\U_{1\bar{1}})(p,t)=
-\left( \tau_{x_n}(p,t)+(u-\U)_{x_n}(p,t)\right)\rho_{1\bar{1}}(p)
\geq \epsilon_1 \rho_{1\bar{1}}( p ). 
$$
 %\begin{center}
 %$\begin{array}{ll}
 %u_{1\bar{1}}(p,t)&=\tau_{1\bar{1}}(p,t)+(u-\U)_{1\bar{1}}(p,t)\\
 %&=-\left( \tau_{x_n}(p,t)+(u-\U)_{x_n}(p,t)\right)\rho_{1\bar{1}}(p)\\
 %&\geq \epsilon_1 \rho_{1\bar{1}}(p)\\
 %\end{array}$
 %\end{center}
 \end{proof}
 \begin{Prop} \label{second 1}
 There exists $D_1=D_1 (\Omega, \epsilon, T, C_u, C_{\varphi}, C_f)$ such that\\
 \begin{center}
 $|D^2u|\leq D_1\;\;$ on $\;\; \partial\Omega\times (\epsilon , T).$ 
 \end{center}
 \end{Prop}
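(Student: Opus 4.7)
We work locally near a fixed boundary point $p\in\partial\Omega$. By an affine change of real coordinates we may assume $p=0$ and the inward unit normal at $p$ equals $\partial/\partial x_n$, so the real tangential directions at $p$ are $s_1,\dots,s_{2n-1}\in\{x_1,y_1,\dots,x_{n-1},y_{n-1},y_n\}$ and the only real normal direction is $x_n$. We bound boundary second derivatives of $u$ in three groups: tangential--tangential, tangential--normal, and normal--normal.

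\emph{Tangential--tangential.} Since $u\equiv\varphi$ on $\partial\Omega\times(\epsilon,T)$, differentiating the boundary identity along two tangential fields and using the second fundamental form of $\partial\Omega$ expresses $u_{s_is_j}(p,t)$ for $i,j\le 2n-1$ in terms of $\varphi_{s_is_j}(p,t)$ and a single factor of $(u-\varphi)_{x_n}(p,t)$. The first is bounded by $C_\varphi$, and the second by Propositions \ref{t} and \ref{gra 2}.

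\emph{Tangential--normal.} For a smooth tangential field $T$ near $p$, set $\psi:=T(u-\varphi)$; then $\psi\equiv 0$ on $\partial\Omega\times(\epsilon,T)$. Applying $T$ to equation \eqref{eqmain} and rearranging,
\begin{equation*}
L(\psi)=Tf+f_u\,T\varphi - T\dot\varphi + \sum u^{\alpha\bar\beta}\varphi_{T,\alpha\bar\beta},
\end{equation*}
so $|L(\psi)|\le K_1$ and $|\psi|\le K_2$ on $U_\delta\times(\epsilon,T)$ with constants controlled by Propositions \ref{t}, \ref{gra 2} and the data. Let $v$ be the barrier of Lemma \ref{guan 98}. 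For a large constant $A>0$ to be fixed, consider $w^\pm:=\pm\psi+Av$. Since $Lv\ge \tfrac14(1+\sum u^{\alpha\bar\alpha})$ while $L(\pm\psi)\ge -K_1$, choosing $A$ large makes $L(w^\pm)\ge 0$ on $U_\delta\times(\epsilon,T)$; on the lateral part of the parabolic boundary $w^\pm\ge 0$ because $\psi$ vanishes on $\partial\Omega$ and $v\ge 0$ with $v$ bounded below by a positive constant on $\{d=\delta\}$, while on the bottom $\{t=\epsilon\}$ the $C^0$ bound on $\psi$ is again dominated by $Av$ after enlarging $A$. The maximum principle (Theorem \ref{max prin}) gives $|\psi|\le A v$ on $U_\delta\times(\epsilon,T)$; as both sides vanish at $p$, differentiation in $x_n$ yields $|T\partial_{x_n}(u-\varphi)(p,t)|\le A|\partial_{x_n}v(p,t)|$, hence $|u_{s_ix_n}(p,t)|\le D$ for every tangential $s_i$.

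\emph{Normal--normal.} The previous two steps together with the polarisation identities bound every entry of the complex Hessian $(u_{\alpha\bar\beta}(p,t))$ except possibly $u_{n\bar n}(p,t)$, while Lemma \ref{bou 11} supplies a uniform positive lower bound $c_\epsilon$ on the restriction of this Hermitian matrix to $T_p^h\partial\Omega$. From equation \eqref{eqmain}, $\det(u_{\alpha\bar\beta})(p,t)=\exp(\dot u-f(t,z,u))(p,t)$, which is bounded above by Proposition \ref{t} and \eqref{sup u}. Expanding the determinant along the last row and using the tangential positivity, we obtain $u_{n\bar n}(p,t)\le C$. Combined with the tangential bound on $u_{y_ny_n}$ and $4u_{n\bar n}=u_{x_nx_n}+u_{y_ny_n}$, this controls $u_{x_nx_n}(p,t)$.

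\emph{Main obstacle.} The delicate step is (ii): one must arrange that the auxiliary function $\pm\psi+Av$ has the correct sign on the \emph{entire} parabolic boundary of $U_\delta\times(\epsilon,T)$, including the bottom slice $t=\epsilon$ where only $C^1$ information from Proposition \ref{gra 2} is available; this is exactly why the final constant $D_1$ must depend on $\epsilon$. Getting the two-sided inequality $L(w^\pm)\ge 0$ requires balancing the term $\tfrac14\sum u^{\alpha\bar\alpha}$ in $Lv$ against contributions of unknown sign in $L\psi$ coming from $\sum u^{\alpha\bar\beta}\varphi_{T,\alpha\bar\beta}$, which is possible precisely because Lemma \ref{guan 98} supplies $Lv$ with a factor $\sum u^{\alpha\bar\alpha}$ that dominates any bounded multiple of the same quantity.
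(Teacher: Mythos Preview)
Your tangential--tangential and normal--normal steps are essentially the paper's. The gap is in the tangential--normal step, and it is a real one.

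You take a smooth tangential field $T$ near $p$ and assert that
\[
L\bigl(T(u-\varphi)\bigr)=Tf+f_u\,T\varphi-T\dot\varphi+\sum u^{\alpha\bar\beta}(T\varphi)_{\alpha\bar\beta},
\]
so that $|L(\psi)|$ is controlled by $C(1+\sum u^{\alpha\bar\alpha})$. This formula is only valid when $T$ has \emph{constant} coefficients; but a constant-coefficient field cannot be tangential to the curved hypersurface $\partial\Omega$ in a full neighbourhood of $p$, so your claim $\psi\equiv 0$ on $\partial\Omega$ would fail. If instead $T=T_j=\partial_{s_j}+P_{s_j}\partial_{x_n}$ as in the paper (so that $\psi$ really vanishes on $\partial\Omega$), then the correct computation of $L(T_j u)$ contains commutator terms of the form
\[
\sum u^{\alpha\bar\beta}\bigl((P_{s_j})_{\alpha}\,u_{x_n\bar\beta}+(P_{s_j})_{\bar\beta}\,u_{x_n\alpha}\bigr),
\]
which involve second derivatives of $u$ and are \emph{not} bounded by any multiple of $1+\sum u^{\alpha\bar\alpha}$. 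Hence $Av$ alone cannot absorb them, no matter how large $A$ is.

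The paper's cure is the classical CKNS device: one adds to the barrier the quadratic penalty $-(u_{y_n}-\underline u_{y_n})^2$. Its contribution to $L$ contains the positive term $2\sum u^{\alpha\bar\beta}(u_{y_n}-\underline u_{y_n})_{\alpha}(u_{y_n}-\underline u_{y_n})_{\bar\beta}$. After rewriting the bad commutator via the identity
\[
\sum_{\alpha} u^{\alpha\bar\beta}u_{x_n\alpha}=2\delta_{\beta n}-i\sum_{\alpha} u^{\alpha\bar\beta}u_{y_n\alpha},
\]
one applies Cauchy--Schwarz in the metric $u^{\alpha\bar\beta}$ so that the commutator is dominated by this positive quadratic term plus a harmless $C(1+\sum u^{\alpha\bar\alpha})$. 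Only then can $Av$ finish the job.

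A secondary issue: on the bottom slice $\{t=\epsilon\}$ you argue that $|\psi|\le K_2$ is dominated by $Av$ after enlarging $A$. But $v$ vanishes on $\partial\Omega$, while you have no second-order control to force $\psi=O(d)$ there, so this does not close. The paper avoids this by inserting the factor $(t-\tfrac{\epsilon}{2})$ in front of both $T_j(u-\underline u)$ and the quadratic term and working on $(\tfrac{\epsilon}{2},T)$, so the whole auxiliary function vanishes on the new bottom slice.
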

 \begin{proof}
 Fix $p\in\partial\Omega$. We can choose complex coordinates $(z_j)_{1\leq j\leq n}$ so that $p=0$ and the positive $x_n$ axis is the interior normal direction of $\partial\Omega$ at $p$. We set for convenience
 \begin{center}
 $s_1=y_1,s_2=x_1,...,s_{2n-1}=y_n, s_{2n}=x_n, s^{'}=(s_1,...,s_{2n-1}).$
 \end{center}
 We also assume that near $p$, $\partial\Omega$ is represented as a graph
 \begin{center}
 $x_n=P(s^{'})=\sum\limits_{j,k<2n} P_{jk}s_js_k+O(|s^{'}|^3).$
 \end{center}
 \textit{Step 1: Bounding the tangent-tangent derivatives.}\\
 Since $(u-\underline{u})(s{'},P(s{'}),t)=0$, we have for $j,k<2n,\; 0<t<T\;$:
 \begin{center}
 $(u-\underline{u})_{s_js_k}(p,t)=-(u-\underline{u})_{x_n}(p,t)P_{jk}$.
 \end{center}
 By Proposition \ref{gra 1}, we obtain
 $$|u_{s_js_k}(p,t)|\leq D^{'}_1,$$
 where $D^{'}_1>0$ depends only on $\Omega,C_{\varphi}, M$.\\
 \textit{Step 2: Bounding the normal-tangent derivatives.}\\
 Define\\
 $$T_j=\frac{\partial}{\partial s_j}+P_{s_j}\frac{\partial}{\partial x_n}.$$
 Again, denote $\Omega_{\delta}=B_{\delta}(p)\cap \Omega$. With $v$ as
 in Lemma \ref{bou 11}, we construct the functions
 \begin{center}
 $\psi_{\pm}=Av+B|z|^2-(t-\frac{\epsilon}{2})(u_{y_n}-\underline{u}_{y_n})^2\pm (t-\frac{\epsilon}{2}) T_j(u-\underline{u})$,
 \end{center}
 such that
 \begin{center}
 $L(\psi_{\pm})\geq 0\;\;$ on $\;\; \Omega_{\delta}\times (\frac{\epsilon}{2},T),$\\
 $\psi_{\pm}\geq 0\;\;$ on $\;\; \Omega_{\delta}\times (\frac{\epsilon}{2},T),$\\
 \end{center}
 where $A,B>0$ depend on $\Omega, C_{\varphi},C_f,\epsilon, T,  M$.\\
 We compute
 \begin{flushleft}
$\begin{array}{ll}
L(-(u_{y_n}-\U_{y_n})^2)
&=-2(u_{y_n}-\U_{y_n})L(u_{y_n}-\U_{y_n})-f_u(t,z,u)(u_{y_n}-\U_{y_n})^2\\
&+2\sum u^{\alpha\bar{\beta}} (u_{y_n}-\U_{y_n})_{\alpha}(u_{y_n}-\U_{y_n})_{\bar{\beta}}\\
\end{array}$
\end{flushleft}
and
\begin{flushleft}
$\begin{array}{ll}
L(\pm T_j(u-\U))
&=\pm L(u_{s_j}-\U_{s_j})\pm P_{s_j}L(u_{x_n}-\U_{x_n})\\
&\mp (u_{x_n}-\U_{x_n})\sum u^{\alpha\bar{\beta}} (P_{s_j})_{\alpha\bar{\beta}}\\
&\mp \sum u^{\alpha\bar{\beta}}\left((u_{x_n}-\U_{x_n})_{\alpha}
(P_{s_j})_{\bar{\beta}}+(u_{x_n}-\U_{x_n})_{\bar{\beta}}(P_{s_j})_{\alpha}\right).\\
\end{array}$
\end{flushleft}
 By equation \eqref{eqmain}, for $k=1,2,..., 2n$
 \begin{center}
$L(u_{s_k}-\underline{u}_{s_k})= f_{s_k}(t,z,u)-\dot{\underline{u}}_{s_k}
+\sum u^{\alpha\bar{\beta}} (\underline{u}_{s_k})_{\alpha\bar{\beta}}
+\underline{u}_{s_k}f_u(t,z,u).$
\end{center}
Hence
\begin{flushleft}
$\begin{array}{ll}
&L(-(u_{y_n}-\U_{y_n})^2\pm T_j(u-\U))\\[8pt] 
&\geq -C_8(1+\sum u^{\alpha\bar{\alpha}})
+2\sum u^{\alpha\bar{\beta}} (u_{y_n}-\U_{y_n})_{\alpha}(u_{y_n}-\U_{y_n})_{\bar{\beta}}\\[8pt]
&\mp \sum u^{\alpha\bar{\beta}}\left((u_{x_n}-\U_{x_n})_{\alpha}
(P_{s_j})_{\bar{\beta}}+(u_{x_n}-\U_{x_n})_{\bar{\beta}}(P_{s_j})_{\alpha}\right),\\[8pt]
\end{array}$
\end{flushleft}
where $C_8>0$ depend on $\epsilon , C_1, C_2,C_3,M, C_{\varphi}, C_f, \rho,P$.\\
On the other hand
$$
\sum\limits_{\alpha=1}^n u^{\alpha\bar{\beta}}u_{x_n\alpha}=
2\delta_{\beta n}-i\sum\limits_{\alpha=1}^n u^{\alpha\bar{\beta}}u_{y_n\alpha},
$$
$$
\sum\limits_{\beta=1}^n u^{\alpha\bar{\beta}}u_{x_n\bar{\beta}}=
2\delta_{\alpha n}+i\sum\limits_{\beta=1}^n u^{\alpha\bar{\beta}}u_{y_n\bar{\beta}}.
$$
Then
\begin{flushleft}
$\begin{array}{ll}
&L(-(u_{y_n}-\U_{y_n})^2\pm T_j(u-\U))\\[8pt] 
&\geq -C_9(1+\sum u^{\alpha\bar{\alpha}})
+2\sum u^{\alpha\bar{\beta}} (u_{y_n}-\U_{y_n})_{\alpha}(u_{y_n}-\U_{y_n})_{\bar{\beta}}\\[8pt]
&\mp \sum u^{\alpha\bar{\beta}}\left((u_{y_n}-\U_{y_n})_{\alpha}
(-iP_{s_j})_{\bar{\beta}}+(u_{y_n}-\U_{y_n})_{\bar{\beta}}(iP_{s_j})_{\alpha}\right)\\[8pt]
\end{array}$
\end{flushleft}
where $C_9>0$ depend on $\epsilon , C_1, C_2,C_3,M, C_{\varphi}, C_f, \rho,P$.\\
By the Cauchy-Schwarz inequality,\\
$2\sum u^{\alpha\bar{\beta}} (u_{y_n}-\U_{y_n})_{\alpha}(u_{y_n}-\U_{y_n})_{\bar{\beta}}
+\dfrac{1}{2}\sum u^{\alpha\bar{\beta}}(iP_{s_j})_{\alpha}(-iP_{s_j})_{\bar{\beta}}$\\
$\geq \pm \sum u^{\alpha\bar{\beta}}\left((u_{y_n}-\U_{y_n})_{\alpha}
(-iP_{s_j})_{\bar{\beta}}+(u_{y_n}-\U_{y_n})_{\bar{\beta}}(iP_{s_j})_{\alpha}\right)$.\\
Then 
$$
L(-(u_{y_n}-\U_{y_n})^2\pm T_j(u-\U))\geq -C_{10}(1+\sum u^{\alpha\bar{\alpha}}),
$$
where $C_{10}>0$ depends on $\Omega, C_{\varphi},C_f,\epsilon, T, M$.\\
Hence, by Lemma \ref{bou 11}, we can choose $A,B>0$ independent of $u$ so that
\begin{center}
$L(\psi_{\pm})\geq 0\;\;$ on $\;\;\Omega_{\delta}\times (\epsilon, T),$\\
$\psi_{\pm} \geq 0\;\;$ on $\;\;\partial_P(\Omega_{\delta}\times (\epsilon, T)).$
\end{center}
 By the maximum principle, we obtain
 $\psi_{\pm}\geq 0$ on $\Omega_{\delta}\times (\frac{\epsilon}{2}, T)$.
 
 Note that $\psi_{\pm} (p,t)=0\;$ for $\; t\in (\frac{\epsilon}{2},T)$.
 
 Hence, 
 $$
 \lim\limits_{x_n\searrow 0} \dfrac{\psi_{\pm} (p+(0,\dots,x_n),t)-\psi_{\pm} (p,t)}{x_n}\geq 0,
 $$
thus
 $$
 |u_{s_jx_n}(p,t)|\leq D^{''}_1,
 $$
  where $t\in (\epsilon,T)$ and $D{''}_1>0$ depend
  only on $\Omega, C_{\varphi},C_f,\epsilon, T, C_u$.\\ 
  \textit{Step 3:Bounding the normal-normal derivatives.}\\
  We have that
    $$
    \det (u_{\alpha\bar{\beta}})=e^{\dot{u}-f(t,z,u)}$$
  is bounded from above and below on $\partial\Omega\times (\epsilon,T)$. \\
  By step 1 and step 2,
  $|u_{z_n\bar{z}_n}\det (u_{\alpha\bar{\beta}})_{\alpha,\beta\leq n-1}|$
  is bounded on $\{ p\}\times (\epsilon,T)$.\\
  Hence, by Lemma \ref{bou 11} , we obtain\\
  $$
  |u_{z_n\bar{z}_n}(p,t)|\leq D^{'''}_1\,\, , \,\, t\in (\epsilon,T),
  $$
  where $D^{'''}_1$ depends on $\Omega, C_{\varphi},C_f,\epsilon, T,C_u$.\\
  Consequently\\
  $$|u_{x_nx_n}|\leq D^{''''}_1,$$
  where $D^{''''}_1$ depends on $\Omega, C_{\varphi},C_f,\epsilon, T, C_u$.\\
 \end{proof}
 \subsection{Interior estimate of the Laplacian}
 \begin{Prop}\label{second 2}
  There exists $D_2=D_2 (\Omega, \epsilon, T, C_{\varphi}, C_f, C_u)$ such that\\
 \begin{center}
 $\Delta u\leq D_2\;\;$ on $\;\; \Omega\times (\epsilon , T).$ 
 \end{center}
 \end{Prop}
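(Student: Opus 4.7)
The approach is a maximum principle argument for a Yau-type auxiliary function, coercified by the subsolution $\underline{u}$ of Section \ref{subsolution} and multiplied by a temporal weight that tames the unknown behaviour at the initial slice. Fix $\tilde\epsilon \in (2/m, \epsilon)$, set $t' = t - \tilde\epsilon$, and on $\Omega \times [\tilde\epsilon, T')$ (for arbitrary $T' < T$, with all constants independent of $T'$) I would consider
\[
H = t' \log \Delta u + A(\underline{u} - u),
\]
where $A > 0$ is a large constant to be fixed at the end. The weight $t'$ vanishes at $t = \tilde\epsilon$, where $\Delta u$ is otherwise not uniformly controlled, so $H \le 2AC_1$ there; on the lateral boundary $\partial\Omega \times [\tilde\epsilon, T')$, Proposition \ref{second 1} (applied with parameter $\tilde\epsilon$) bounds $\log \Delta u$, hence $H$. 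Meanwhile, since $dd^c \underline{u} \ge dd^c|z|^2$, one has $\sum u^{\alpha\bar\beta} \underline{u}_{\alpha\bar\beta} \ge \sum u^{\alpha\bar\alpha}$, the coercive term that will dominate $\log \Delta u$.

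At an interior maximum $(z_0, t_0)$ of $H$ one has $\dot H \ge 0$ and $\sum u^{\alpha\bar\beta} H_{\alpha\bar\beta} \le 0$. Theorem \ref{lap 2} applied to $\omega = dd^c|z|^2$ (whose bisectional curvature vanishes) gives
\[
\sum u^{\alpha\bar\beta}(\log \Delta u)_{\alpha\bar\beta} \ge \frac{\Delta \log \det u_{\alpha\bar\beta}}{\Delta u} = \frac{\Delta(\dot u - f(t,z,u))}{\Delta u}.
\]
Expanding $\Delta(f(t,z,u(z,t))) = f_u \Delta u + R$ with $|R|$ controlled by $\|f\|_{C^2} \le C_f$, $|u| \le C_1$ and $|\nabla u| \le C_4$ (Proposition \ref{gra 2}), and using $f_u \le 0$ to dispense with the sole sign-bad term, I would propagate these into the inequality $\dot H - \sum u^{\alpha\bar\beta} H_{\alpha\bar\beta} \ge 0$ and, with $|\dot u| \le C_2/\tilde\epsilon$ (Proposition \ref{t}), arrive at
\[
A \sum u^{\alpha\bar\alpha}(z_0, t_0) \le \log \Delta u(z_0, t_0) + C,
\]
with $C$ depending on the data, on $A$, and on $\tilde\epsilon$.

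The final step closes the loop by AM--GM. The equation gives $\det(u_{\alpha\bar\beta}) = e^{\dot u - f}$, bounded above by a constant $e^{C'}$. If $\lambda_1 \le \cdots \le \lambda_n$ are the eigenvalues of $(u_{\alpha\bar\beta})$, the previous inequality forces $\lambda_i \ge A/(\log \Delta u + C)$ for each $i$, whence
\[
\Delta u \le n\lambda_n = n\frac{\det(u_{\alpha\bar\beta})}{\prod_{i<n}\lambda_i} \le n\,e^{C'}\!\left(\frac{\log\Delta u + C}{A}\right)^{n-1}.
\]
Since the right-hand side grows only polylogarithmically in $\Delta u$, the inequality forces $\Delta u(z_0, t_0) \le K$ for some $K$ depending only on the data. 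Combining with the boundary bound gives $H \le K'$ on the whole cylinder, and restricting to $t \in (\epsilon, T')$, where $t' \ge \epsilon - \tilde\epsilon > 0$, yields $\Delta u \le D_2$.

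The main technical obstacle is this closure: the raw estimate $\sum u^{\alpha\bar\alpha} \lesssim \log \Delta u$ is not self-absorbing, and its inversion to a bound on $\Delta u$ relies essentially on coupling it with the \emph{upper} bound on $\det(u_{\alpha\bar\beta})$ from the equation, which itself rests on the prior estimates on $\dot u$ and $f$ and hence requires $t \ge \tilde\epsilon > 0$. A secondary point is the bookkeeping of the $t'$-weight: $\partial_t(t' \log \Delta u)$ produces exactly the $\log \Delta u$ term that is later absorbed by the polylogarithmic growth, so no finer choice of temporal weight is needed.
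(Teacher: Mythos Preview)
Your proof is correct and follows the same Yau-type strategy as the paper, with two cosmetic differences worth noting. First, the paper's auxiliary function is $\phi=(t-\epsilon)\log\Delta u+A_1|z|^2-A_2t$, using $|z|^2$ rather than your $\underline u-u$ to generate the coercive term $-A_1\sum u^{\alpha\bar\alpha}$; both choices work for the same reason. Second, and more interestingly, the paper absorbs the offending $\log\Delta u$ term \emph{before} applying the maximum principle: it invokes Theorem~\ref{lap 1} in the form
\[
\log\Delta u\le \log n+\log\det(u_{\alpha\bar\beta})+(n-1)\log\Big(\sum u^{\alpha\bar\alpha}\Big),
\]
and then uses $(n-1)\log x\le A_1 x+C$ to obtain $L(\phi)\le 0$ globally on $\Omega\times(\epsilon,T)$, so the bound follows directly from the parabolic boundary values (Proposition~\ref{second 1}). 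Your closure at the interior maximum via the eigenvalue inequality $\Delta u\le n\,e^{C'}\big((\log\Delta u+C)/A\big)^{n-1}$ is in effect a pointwise restatement of the second inequality in Theorem~\ref{lap 1}, so the two routes are equivalent; the paper's version is slightly cleaner in that it avoids the separate polylogarithmic-growth argument at the end.
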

 \begin{proof}
 We set 
 $$\phi=(t-\epsilon)\log\Delta u+A_1 |z|^2-A_2t,$$
 where $A_1,A_2>0$ will be specified later.\\
 We have
 \begin{flushleft}
 $\begin{array}{ll}
 L(\phi)&=  \log\Delta u +(t-\epsilon)\dfrac{\Delta\dot{u}}{\Delta u}-A_2-(t-\epsilon)\sum u^{\alpha\bar{\beta}}(\log \Delta u)_{\alpha\bar{\beta}}\\[8pt]
  & -A_1\sum u^{\alpha\bar{\alpha}} -\phi f_u(t,z,u).
  \end{array}$
  \end{flushleft}
 By Theorem \ref{lap 1},
 $$
 \log\Delta u\leq \log n + \log\det (u_{\alpha\bar{\beta}})+(n-1)\log (\sum u^{\alpha\bar{\alpha}}).
 $$
 By Theorem \ref{lap 2},
 \begin{flushleft}
 $\begin{array}{ll}
 \dfrac{\Delta\dot{u}}{\Delta u}-\sum u^{\alpha\bar{\beta}}(\log \Delta u)_{\alpha\bar{\beta}}&\leq\dfrac{\Delta\dot{u}}{\Delta u}-\dfrac{\Delta\log\det (u_{\alpha\bar{\beta}})}{\Delta u}\\[8pt]
 & =\dfrac{\Delta f(t,z,u)}{\Delta u}\\[8pt]
 &=\dfrac{\Delta_z f(t,z,u)}{\Delta u} +f_u(t,z,u) +\sum \dfrac{f_{u s_j}(t,z,u)u_{s_j}}{\Delta u}\\
 &+\sum \dfrac{f_{uu}(t,z,u) u_{s_j}^2}{\Delta u}.\\
 \end{array}$
 \end{flushleft}
Hence, there exist $A_1, A_2>0$ depending on $\Omega, \epsilon, T, C_{\varphi}, C_f, C_u$ such that\\
\begin{center}
$L(\phi)\leq 0$ on $\Omega\times (\epsilon, T)$.
\end{center}
Thus, by the maximum principle and Proposition \ref{second 1},\\
\begin{center}
$(t-\epsilon)\log\Delta u\leq D_2^{'}\;\;$ on $\;\;\Omega\times (\epsilon , T),$
\end{center}
where $D_2^{'}$ depends on $\Omega, \epsilon, T, C_{\varphi}, C_f, C_u$.\\
Therefore,\\
\begin{center}
$\Delta u\leq e^{D_2^{'}/\epsilon}\;\;$ on $\;\;\Omega\times (2\epsilon , T).$
\end{center}
 \end{proof}
 \section{$C^{2,\alpha}$ estimate up to the boundary for the parabolic equation}
 \label{C2a}
 \subsection{Parabolic H\"older spaces}
 \begin{flushleft}
 The reader can find more complete notations in  \cite[Chapter 4]{Lieb96} or
  \cite[Chapter 8]{Kryl96}.\\
 \end{flushleft}
 In $\R^N\times\R$ we define the parabolic distance between the points $X_1=(x_1,t_1)$, 
 $X_2=(x_2,t_2)$ as\\
 $$d(X_1,X_2)=|x_1-x_2|+|t_1-t_2|^{1/2}.$$
 Let $0<\alpha<1$. Let $u$ be a function defined in a domain $Q\subset\R^N\times\R$. We say 
 that $u$ is uniformly H\"older continuous in $Q$ with exponent $\alpha$, 
 or $u\in C^{\alpha}(Q)$, if and only if
 \begin{center}
 $[u]_{\alpha;Q}=\sup\limits_{X_j\in Q, X_1\neq X_2 }\dfrac{|u(X_1)-u(X_2)|}{d^{\alpha}
 (X_1,X_2)}<\infty$.
 \end{center}
 Let $0<\beta<2$. We denote
 \begin{center}
 $\langle u \rangle_{\beta;Q}=\sup\limits_{(x,t_1)\neq(x,t_2)\in Q }
 \dfrac{|u(x,t_1)-u(x,t_2)|}
 {|t_1-t_2|^{\beta /2}}$.
 \end{center}
 We say that $u$ is uniformly H\"older continuous in $Q$ with exponent $k+\alpha$, or
 $u\in C^{k,\alpha}(Q)$ if the derivatives $D_x^jD_t^lu$ exist for 
 $|j|+2l\leq k$ and the norm
 \begin{center}
 $\|u\|_{C^{k,\alpha}(Q)}=\sum\limits_{|j|+2l\leq k}\sup\limits_Q |D_x^jD_t^lu|
 +\sum\limits_{|j|+2l= k} [D_x^jD_t^lu]_{\alpha;Q}
 +\sum\limits_{|j|+2l= k-1}\langle D_x^jD_t^lu\rangle_{\alpha+1;Q}$
 \end{center}
 is finite.\\
The norm $\|.\|_{C^{k,\alpha}(Q)}$ makes $ C^{k,\alpha}(Q)$ a Banach space. If we define
the similar notions for $\bar{Q}$, then $ C^{k,\alpha}(Q)=C^{k,\alpha}(\bar{Q})$.
 \subsection{$\bf{C^{2,\alpha}}$ estimate up to the boundary }\label{C2a.subsec}
 \begin{flushleft}
 Let $\Omega$ be a bounded smooth domain of $\R^N$.
 We consider the equation
 \end{flushleft}
 \begin{equation}
 \label{intro.para}
 \dot{u}=F(D^2u)+f(t,x,u) \; \mbox{in}\; \Omega\times (0,\tilde{T}),
 \end{equation}
 where $\tilde{T}>0$, $f$ is a smooth function defined on $[0,\tilde{T})
 \times\bar{\Omega}\times\R$
 and $F$ is a smooth concave function defined on the set of all real 
 $N\times N$ matrices.
 In addition, we assume that there exist $0<\lambda<\Lambda<\infty$ such that\\
 \begin{equation}\label{F}
 \lambda \mbox{ \rm tr} \eta\leq F(r+\eta)-F(r)\leq \Lambda \mbox{ \rm tr} \eta
 \end{equation}
 for any symmetric matrix $r$, any positive definite matrix $\eta$.\\
 We will establish $C^{2,\alpha}$ estimates for the solution of \eqref{intro.para} on 
 $\bar{\Omega}\times (\epsilon, T)$ for any $0<\epsilon<T<\tilde{T}$ without 
 $C^{2,\alpha}$ conditions on $\Omega\times \{ 0\}$. 
 The main result of this section is the following:
\begin{The}
\label{main.C2alpha}
 Let $F$ be concave and smooth satisfying \eqref{F}. Let $f$ be a smooth function in
$[0,\tilde{T})\times\bar{\Omega}\times\R$ and $\varphi$ be a smooth function in 
 $\bar{\Omega}\times [0,\tilde{T})$. Assume that 
 $u\in C^{2;1}(\bar{\Omega}\times [0,\tilde{T}))\cap C^{\infty}(\Omega\times (0,\tilde{T}))$ 
 is a solution of
\begin{equation}\label{parabollic}
\begin{cases}
\begin{array}{ll}
\dot{u}=F(D^2u)+f(t,x,u) \; &\mbox{in}\; \Omega\times (0,\tilde{T}),\\
u=\varphi&\mbox{on}\; \partial\Omega\times(0,\tilde{T}),
\end{array}
\end{cases}
\end{equation}
and that
$$|u|+|\dot{u}|+|\nabla u|+|D^2u|\leq C,$$
then 
$u\in C^{2,\alpha}(\bar{\Omega}\times (0,\tilde{T}))$
 satisfies
\begin{equation}\label{C2alpha}
\|u\|_{C^{2,\alpha}(\Omega\times (\epsilon,T))}\leq 
C_{\epsilon,T}\;\;\;\forall 0<\epsilon<T<\tilde{T},
\end{equation}
where $0<\alpha<1$, $C_{\epsilon,T}>0$ depend on 
$\lambda,\Lambda,\Omega,C,\epsilon,T$ and the upper bound of 
$\|\varphi\|_{C^4}+\|F\|_{C^1}+\|f\|_{C^2}$.
\end{The}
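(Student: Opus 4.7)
The plan is to reduce this theorem to two classical regularity tools for concave uniformly parabolic equations --- the parabolic Evans-Krylov interior $C^{2,\alpha}$ estimate and Krylov's boundary $C^{2,\alpha}$ estimate --- stitched together by a compactness covering argument. As a preliminary step, I would rewrite the equation as $\dot{u}-F(D^2u)=g(t,x)$, where $g(t,x):=f(t,x,u(t,x))$. The bounds $|u|+|\nabla u|+|\dot{u}|\leq C$ together with $f\in C^2$ imply that $g$ is globally Lipschitz with respect to the parabolic distance, and hence $g\in C^{\alpha}$ for every $\alpha\in(0,1)$ with norm bounded solely in terms of $C$ and $\|f\|_{C^2}$. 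The equation thus becomes a concave, uniformly parabolic, fully nonlinear equation with Hölder right-hand side, for which an a priori $C^2$ bound on $u$ is available.

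For an interior point $(x_0,t_0)\in\Omega\times(\epsilon,T)$, I would choose $r_0\leq \tfrac{1}{2}\min(\mathrm{dist}(x_0,\partial\Omega),\sqrt{\epsilon/2})$ so that the parabolic cylinder $Q_{r_0}(x_0,t_0):=B_{r_0}(x_0)\times(t_0-r_0^2,t_0]$ lies compactly in $\Omega\times(\epsilon/2,T]$. The parabolic Evans-Krylov theorem then yields
$$\|u\|_{C^{2,\alpha}(Q_{r_0/2}(x_0,t_0))}\leq C'$$
with $C'$ depending only on $\lambda,\Lambda,C,r_0,\|g\|_{C^\alpha}$. For a boundary point $p\in\partial\Omega$, I would flatten $\partial\Omega$ near $p$ by a smooth diffeomorphism $\Phi$; under $\Phi$ the equation remains concave uniformly parabolic (with constants depending on $\Omega$), and the transformed lateral Dirichlet data inherits the $C^\infty$ regularity of $\varphi$. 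Krylov's boundary $C^{2,\alpha}$ estimate for concave parabolic equations (\cite{Lieb96,Kryl96}) applied on a half-cylinder whose bottom sits above $t=\epsilon/2$ gives a local $C^{2,\alpha}$ bound up to the flat boundary, which I transport back through $\Phi^{-1}$. Covering the compact set $\bar{\Omega}\times[\epsilon,T]$ by finitely many such interior and boundary neighborhoods produces \eqref{C2alpha}.

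The chief obstacle is invoking the boundary $C^{2,\alpha}$ estimate with constants depending only on $\|u\|_{C^2}$ and not on any higher norm of $u$ at the parabolic bottom --- this is precisely what the Evans-Krylov-Krylov machinery delivers, the Hölder exponent emerging from the concavity of $F$ via a weak $L^2$ second-derivative estimate, so only $C^2$ information on $u$ enters the constant. A secondary subtlety is the absence of any $C^{2,\alpha}$ data at $t=0$; this is circumvented by positioning every parabolic cylinder above $t=\epsilon/2$, at which time the uniform $C^{2;1}$ assumption alone provides sufficient input for the boundary estimate. With these two tools in place, the covering argument is routine and all constants are controlled by the quantities listed in the statement.
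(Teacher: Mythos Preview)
Your strategy is correct and matches the paper's at the structural level: reduce to a concave uniformly parabolic equation with H\"older right-hand side, combine interior Evans--Krylov with a lateral-boundary $C^{2,\alpha}$ estimate, and cover $\bar\Omega\times[\epsilon,T]$. The difference lies in how the boundary estimate is obtained. You invoke the parabolic Krylov boundary $C^{2,\alpha}$ theorem as a black box from \cite{Lieb96,Kryl96}; the paper instead proves it by hand, following Caffarelli's technique from \cite{CC95}. Concretely, the paper first establishes a H\"older bound for $D^2u$ and $\dot u$ along $\partial\Omega\times(\epsilon,T)$ (Lemma~\ref{b-b}, via boundary flattening and a Harnack-type oscillation lemma from \cite{Lieb96}), then upgrades this to a boundary-to-interior H\"older bound for $\dot u$ and for each $u_{\xi\xi}$ by constructing explicit barriers of the form $K_1(x_N-Q(x'))^{\beta/2}+K_2((x_N-Q(x'))^2+(t_0-t))^{\beta/4}$ and exploiting the concavity of $F$ for the one-sided control (Lemmas~\ref{b-i-dot}--\ref{b-i 2}), and finally assembles these with the interior estimate exactly as you do. Your route is shorter provided one is willing to cite the Krylov boundary theorem in its interior-in-time form (which is the slightly nonstandard version needed here, since no $C^{2,\alpha}$ data is available at the parabolic bottom); the paper's route is self-contained and makes transparent precisely where the $C^2$ a priori bound enters---through the barrier constants---and why no compatibility at $t=0$ is required.
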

\begin{Rem}
In the theorem above, we denote
$$\|\varphi\|_{C^k(\Omega\times (0,\tilde{T}))}
=\sum\limits_{|j|+2l\leq k}\sup\limits_{\Omega\times (0,\tilde{T})}
|D^j_xD^l_t\varphi|,
$$
$$
\|F\|_{C^k(Mat(N\times N,\R))}=\sum\limits_{|j|\leq k}\sup |D^jF|,
$$
$$\|f\|_{C^k((0,\tilde{T})\times\Omega\times\R))}
=\sum\limits_{j_1+|j_2|+j_3\leq k}\sup |D_t^{j_1}D_x^{j_2}D_u^{j_3}f|.
$$
\end{Rem}
In order to prove Theorem \ref{main.C2alpha}, we use the technique of Caffarelli as in
\cite{CC95}. We need to prove a series of lemmas.
 \begin{Lem}\label{b-b}
 There exist  $0<\beta<1$ and $C_{\epsilon,T}>0$ depending on 
$\lambda,\Lambda,\Omega,C,\epsilon,T$ and the upper bound of 
$\|\varphi\|_{C^4}+\|F\|_{C^1}+\|f\|_{C^1}$ such that
 $$\dfrac{\|D^2u(x,t)-D^2u(x_0,t_0)\|}{(|x-x_0|+|t-t_0|^{1/2})^{\beta}}
 \leq C_{\epsilon,T},
 \;\;\;\forall x,x_0\in\partial\Omega;
 \forall t,t_0\in (\epsilon,T).$$
 \end{Lem}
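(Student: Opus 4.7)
The plan is to localize at a boundary point, flatten the boundary, and then control the three natural components of $D^2u$ at the boundary separately: tangential-tangential, tangential-normal, and normal-normal. Fix $p\in\partial\Omega$ and choose a smooth diffeomorphism $\Phi$ on a neighborhood $U$ of $p$ taking $U\cap\Omega$ into the half-ball $B_1^+=\{y_N>0\}$ and $U\cap\partial\Omega$ onto $\{y_N=0\}$. Setting $\tilde u(y,t)=u(\Phi^{-1}(y),t)$, the equation becomes $\dot{\tilde u}=\tilde F(y,D^2\tilde u)+\tilde f(t,y,\tilde u)$, where $\tilde F$ remains smooth and concave in the Hessian variable and satisfies ellipticity bounds of the form \eqref{F} with new constants depending only on $\Phi$.

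The tangential-tangential components and the time derivative are essentially free from the boundary condition. Since $\tilde u=\tilde\varphi$ on $\{y_N=0\}$ with $\tilde\varphi$ smooth, differentiating twice in tangential directions gives $\tilde u_{y_iy_j}(y',0,t)=\tilde\varphi_{y_iy_j}(y',0,t)$ for $i,j<N$, a $C^2$ (and in particular Hölder) function of $(y',t)$. Likewise $\dot{\tilde u}=\dot{\tilde\varphi}$ on the flattened boundary is smooth.

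For the tangential-normal components I would differentiate the PDE in a tangential direction: for fixed $k<N$, the function $w=\tilde u_{y_k}$ solves the linear parabolic equation
\[
\dot w = a^{ij}(y,t)\,w_{y_iy_j} + b(y,t),\qquad a^{ij}=\tilde F_{r_{ij}}(y,D^2\tilde u),
\]
with bounded measurable coefficients, uniform ellipticity guaranteed by \eqref{F}, and a bounded right-hand side $b$ (by the assumed bounds on $D^2u$ and smoothness of $\tilde F,\tilde f$). The Dirichlet data $w|_{\{y_N=0\}}=\tilde\varphi_{y_k}$ is smooth, so the boundary $C^{1,\beta}$ estimate for linear parabolic equations with bounded measurable coefficients (Krylov--Safonov; see Lieberman, Ch.~12) yields $w\in C^{1,\beta}$ up to $\{y_N=0\}$ on $B^+_{1/2}\times[\epsilon,T]$. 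In particular $\tilde u_{y_k y_N}(y',0,t)$ is $\beta$-Hölder in $(y',t)$.

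The normal-normal entry is then extracted from the equation itself. By concavity and uniform ellipticity of $\tilde F$, the map $r_{NN}\mapsto \tilde F(y,r)$ (with all other entries of $r$ fixed) is smooth with derivative in $[\lambda,\Lambda]$, hence a diffeomorphism onto its range. On the boundary $\dot{\tilde u}-\tilde f(t,y,\tilde u)=\tilde F(y,D^2\tilde u)$; the left-hand side and all Hessian entries except $\tilde u_{y_Ny_N}$ are $\beta$-Hölder in $(y',t)$ by the previous two steps, so the implicit function theorem transfers Hölder regularity to $\tilde u_{y_Ny_N}(y',0,t)$. A finite cover of $\partial\Omega$ by such charts and the uniformity of the constants in $\Omega,\epsilon,T,C$ finish the lemma.

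The main obstacle is the tangential-normal step: it hinges on the boundary Hölder gradient estimate for linear parabolic equations with merely bounded measurable coefficients, which is the genuine analytic input and has to be invoked with care after the diffeomorphic flattening (verifying that after composing with $\Phi$ the coefficients remain bounded measurable and uniformly elliptic, and that the boundary data retains enough regularity).
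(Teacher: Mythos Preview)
Your approach coincides with the paper's: flatten the boundary, read off the tangential--tangential entries and $\dot u$ from the Dirichlet data, linearize by differentiating tangentially and invoke a Krylov-type boundary H\"older gradient estimate (the paper cites it in the form of Lieberman's Lemma~7.32) for the tangential--normal entries, then solve for the normal--normal entry from the equation via the implicit function theorem. The paper first subtracts $\varphi$ so that the flattened boundary data is identically zero, but otherwise the argument is the same.

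One small omission: after the diffeomorphism, $D^2_xu$ involves first derivatives of $\tilde u$ through the second derivatives of $\Phi^{-1}$, so the transformed equation reads $\dot{\tilde u}=\tilde F(y,D\tilde u,D^2\tilde u)+\tilde f$, not $\tilde F(y,D^2\tilde u)+\tilde f$. Your implicit function step therefore also needs H\"older control of $\tilde u_{y_N}$ along $\{y_N=0\}$, not just of the second derivatives. The paper obtains this by applying the same Krylov lemma directly to $v=\tilde u-\tilde\varphi$ (rewritten as $\dot v=\Delta v+f_1$ with bounded $f_1$); this is a one-line addition within your framework and does not change the strategy.
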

 \begin{proof}
 Let $x_0\in\partial\Omega$. We consider a smooth diffeomorphism
 \begin{center}
 $\psi : U\cap\Omega\longrightarrow B^+_4:=\{y\in\R^N:|y|<4,y_N>0\}$\\
 $x\mapsto y=\psi(x)$
 \end{center}
 such that $\psi (x_0)=0$ and
 $$\psi (U\cap\partial\Omega)=\Gamma_4=\{ y=(y',y_N)\in \R^{N-1}\times\R:
 |y'|<4,y_N=0\},$$
 where $U$ is a neighborhood of $x_0$.\\
 We define
 \begin{center}
 $v(y,t)=u(\psi^{-1}(y),t)-\varphi(\psi^{-1}(y),t),$
 \end{center}
 where $y\in B^+_4\bigcup\Gamma_4$, $t\in (\epsilon,T)$. 
 Then $v|_{\Gamma_4\times (\epsilon,T)}=0$ and $v$ satisfies the equation
 \begin{equation}\label{lembb.G}
 \dot{v}=G(t,y,v,Dv,D^2v)
 \end{equation}
 where the upper bound of $\|G\|_{C^1}$ depends on $\|F\|_{C^1}$, $\|f\|_{C^1}$
 and $\psi$. Moreover,there exists $A>1$ depending on $\psi$ (hence, $A$ depends only on 
 $\Omega$) such that
 $$\dfrac{\lambda}{A}|\xi|^2\leq \dfrac{\partial G}{\partial r_{ij}}\xi_i\xi_j
 \leq A\Lambda |\xi|^2$$
 for all $\xi\in\R^N$.\\
 Now we only need to show
 $$\|D^2v(y,t)-D^2v(0,t_0)\|\leq C_{\epsilon,T}(|y|+|t-t_0|^{1 /2})^{\beta}$$
 for any $y\in\Gamma_1, t,t_0\in (\epsilon,T)$.\\
 By the implicit function theorem, we have\\
 $$v_{NN}=H(t,y,v,\dot{v},Dv, (v_{ij})_{j<N}).$$
 By the chain rule, we have
 $$|DH|\leq \dfrac{A}{\lambda}(\sup |DG|+1).$$
 Hence, there exists $B>0$ such that
 \begin{flushleft}
  $\begin{array}{ll}
 |v_{NN}(y,t)-v_{NN}(0,t_0)|&\leq 
 B(\sup\limits_{j<N} |v_{ij}(y,t)-v_{ij}(0,t_0)|+|\dot{v}(y,t)-\dot{v}(0,t_0)|\\
 &+|Dv(y,t)-Dv(0,t_0)|+|y|+|t-t_0|).
 \end{array}$
 \end{flushleft}
 Note that $\dot{v}|_{\Gamma_4\times (\epsilon,T)}=v_j|_{\Gamma_4\times (\epsilon,T)}=
 v_{ij}|_{\Gamma_4\times (\epsilon,T)}=0$ for $j<N$. Then we only need to show
 \begin{equation}\label{lembb.vN}
 |v_N(y,t)-v_N(0,t_0)|\leq C_{\epsilon,T}(|y|+|t-t_0|^{1/2})^{\beta},\\
 \end{equation}
 \begin{equation}\label{lembb.vNk}
 |v_{Nk}(y,t)-v_{Nk}(0,t_0)|\leq C_{\epsilon,T}(|y|+|t-t_0|^{1/2})^{\beta},
 \end{equation}
 for any $y\in\Gamma_1, t,t_0\in (\epsilon, T)$ and $k<N$.\\
 By \eqref{lembb.G}, we have\\
 \begin{equation}\label{lembb.end1}
 \dot{v}=\Delta v+f_1(t,y),
 \end{equation}
 where $\Delta$ is the Laplacian operator and $f_1(t,y)=G(t,y,v,Dv,D^2v)-\Delta v$. By
 the hypothesis of theorem,
 $\|f_1\|_{L^{\infty}}$ is bounded by a universal constant.\\
 Now we take the derivative of equation \eqref{lembb.G} in the direction $y_k$ and get that
 \begin{equation}\label{lembb.end2}
 \dot{v}_k=\sum\limits_{i,j=1}^N(v_k)_{ij}
 \dfrac{\partial G}{\partial r_{ij}}(t,y,v,Dv,D^2v)+f_2(t,y),
 \end{equation}
 where  
 $$
 f_2(t,y)=\frac{\partial G}{\partial y_k}(t,y,v,Dv,D^2v)
 +v_k\frac{\partial G}{\partial p}(t,y,v,Dv,D^2v)
 +\sum\limits_{l=1}^N v_{lk}\frac{\partial G}{\partial q_l}(t,y,v,Dv,D^2v).
 $$
 Then $\|f_2\|_{L^{\infty}}$ is bounded by a universal constant.\\
 Then  \cite[Lemma 7.32]{Lieb96} states that
 \begin{Lem}
 If $u\in C^{2;1}(B_4^+\times (0,T))$ satisfies
 $$
 |\dot{u}-\sum a_{ij}u_{ij}|\leq A_1,
 $$
 $$|u|\leq A_2x_N,
 $$
where $a_{ij}\in C(B_4^+\times (0,T))$ is such that
 $$\sup |a_{ij}|\leq B \mbox{ and}$$
  $$\lambda|\xi|^2\leq \sum a_{ij}\xi_i\xi_j\leq \Lambda |\xi|^2,$$
  then there are positive constants $\beta$ and $C$ determined only by $A_1, A_2, B, \lambda,
  \Lambda,\epsilon, T, N$ such that
  $$(\sup\limits_{U(y,t,R)}\frac{u}{x_N}-\inf\limits_{U(y,R)}\frac{u}{x_N})\leq
  CR^{\beta}\left(\sup\limits_{B^+_4\times (0,T)}\frac{u}{x_N}-
  \inf\limits_{B^+_4\times (0,T)}\frac{u}{x_N}+1\right),
  $$
 where $y\in B^+_1$, $2\epsilon<t<T-2\epsilon$, $R<\epsilon$ and 
 $U(y,t,R)=B^+_R(y)\times (t-R^2,t+R^2)$.
 \end{Lem}
 Applying this lemma to the equations \eqref{lembb.end1}
 and  \eqref{lembb.end2}, we obtain \eqref{lembb.vN} and \eqref{lembb.vNk}.
 \end{proof}
 \begin{Cor}
 There exists $C_{\epsilon,T}>0$ depending on 
$\lambda,\Lambda,\Omega,C,\epsilon,T$ and the upper bound of 
$\|\varphi\|_{C^4}+\|F\|_{C^1}+\|f\|_{C^1}$ such that\\
 $$\dfrac{|\dot{u}(x,t)-\dot{u}(x_0,t_0)|}{(|x-x_0|+|t-t_0|^{1/2})^{\beta}}
 \leq C_{\epsilon,T},
 \;\;\;\forall x,x_0\in\partial\Omega;
 \forall t,t_0\in (\epsilon,T).$$
 where $0<\beta<1$ is the constant in Lemma \ref{b-b}.
 \end{Cor}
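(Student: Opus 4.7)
The plan is to bootstrap the Hölder estimate of Lemma \ref{b-b} into one for $\dot u$ by invoking the PDE itself. Since $u \in C^{2;1}(\bar\Omega \times [0,\tilde T))$, the equation $\dot u = F(D^2 u) + f(t,x,u)$ extends by continuity from $\Omega\times(0,\tilde T)$ to $\bar\Omega \times (0,\tilde T)$, so at any two boundary points $(x,t),(x_0,t_0) \in \partial\Omega \times (\epsilon,T)$ I would write
$$
\dot u(x,t) - \dot u(x_0,t_0) = \bigl[F(D^2u(x,t)) - F(D^2u(x_0,t_0))\bigr] + \bigl[f(t,x,u(x,t)) - f(t_0,x_0,u(x_0,t_0))\bigr]
$$
and estimate the two brackets separately.

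For the first bracket, the hypothesis $\|F\|_{C^1} < \infty$ yields the Lipschitz bound $|F(r) - F(r')| \le \|F\|_{C^1}\,\|r-r'\|$, and Lemma \ref{b-b} furnishes exactly the $\beta$-Hölder control of $\|D^2u(x,t) - D^2u(x_0,t_0)\|$ with respect to the parabolic distance. Composing these two inequalities delivers the required $\beta$-Hölder bound for the first bracket, with constant controlled by $\|F\|_{C^1}$ and the constant of Lemma \ref{b-b}.

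For the second bracket, $\|f\|_{C^1} < \infty$ together with $|\dot u| + |\nabla u| \le C$ makes $(t,x) \mapsto f(t,x,u(x,t))$ globally Lipschitz in $(t,x)$ on $\bar\Omega \times (\epsilon,T)$, with a constant depending only on $C$ and $\|f\|_{C^1}$. Hence the second bracket is controlled by a multiple of $|t-t_0| + |x-x_0|$, and on the bounded parabolic set $\bar\Omega \times [\epsilon,T]$ this is in turn dominated by a constant multiple of $(|x-x_0| + |t-t_0|^{1/2})^\beta$ (using $a \le \mathrm{diam}^{\,1-\beta}\,a^\beta$ for bounded $a$, and $|t-t_0|\le T^{1/2}|t-t_0|^{1/2}$). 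Summing the two estimates yields the claimed inequality with $\beta$ identical to the exponent of Lemma \ref{b-b}. There is no substantive obstacle here; the result is a routine corollary of Lemma \ref{b-b} via the PDE, and the only concern is tracking the dependencies of constants on $\lambda,\Lambda,\Omega,C,\epsilon,T,\|\varphi\|_{C^4},\|F\|_{C^1},\|f\|_{C^1}$, each of which is transparent in the argument above.
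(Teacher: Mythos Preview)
Your argument is correct and is exactly the approach the paper has in mind: the Corollary is stated in the paper without proof, as an immediate consequence of Lemma~\ref{b-b} via the equation $\dot u = F(D^2u) + f(t,x,u)$, the Lipschitz bound on $F$, and the Lipschitz bound on $f$ combined with the a~priori bounds $|u|+|\dot u|+|\nabla u|\le C$. Your tracking of the dependencies of the constants is also consistent with the paper.
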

 \begin{Lem}\label{b-i-dot}
  There exists $C_{\epsilon,T}>0$ depending on 
$\lambda,\Lambda,\Omega,C,\epsilon,T$ and the upper bound of 
$\|\varphi\|_{C^4}+\|F\|_{C^1}+\|f\|_{C^1}$ such that\\
 $$\dfrac{|\dot{u}(x,t)-\dot{u}(x_0,t_0)|}{(|x-x_0|+|t-t_0|^{1/2})^{\beta /2}}
 \leq C_{\epsilon,T},
 \;\;\;\forall x\in\Omega , x_0\in\partial\Omega;
 \forall t,t_0\in (\epsilon,T).$$
 where $0<\beta<1$ is the constant in Lemma \ref{b-b}.
 \end{Lem}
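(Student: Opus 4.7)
The strategy is to combine the boundary-boundary H\"older estimate of the previous corollary with an interior-boundary estimate that exploits the linear uniformly parabolic equation satisfied by $\dot u$.

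Setting $w:=\dot u$ and differentiating \eqref{parabollic} in $t$, one obtains
$$
\dot w \;=\; \sum_{i,j}\partial_{r_{ij}}F(D^2u)\,w_{ij} \;+\; f_u(t,x,u)\,w \;+\; f_t(t,x,u).
$$
By \eqref{F} together with the hypothesis $|D^2u|\le C$, the matrix $a_{ij}(x,t):=\partial_{r_{ij}}F(D^2u(x,t))$ has eigenvalues in $[\lambda,\Lambda]$ and bounded entries, while the zero-order coefficient $f_u$ and the source $f_t$ are bounded in terms of $\|f\|_{C^1}$. Thus $w$ solves a linear uniformly parabolic equation with bounded ingredients; moreover $|w|\le C$ throughout $\Omega\times(\epsilon,T)$, and $w|_{\partial\Omega\times(\epsilon,T)}$ is H\"older-$\beta$ by the previous corollary.

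Given $x_0\in\partial\Omega$, $x\in\Omega$, $t,t_0\in(\epsilon,T)$, let $\bar x\in\partial\Omega$ be nearest to $x$, set $d:=|x-\bar x|$ and $r:=|x-x_0|+|t-t_0|^{1/2}$, and decompose
$$
|w(x,t)-w(x_0,t_0)|\;\le\;|w(x,t)-w(\bar x,t)|\,+\,|w(\bar x,t)-w(\bar x,t_0)|\,+\,|w(\bar x,t_0)-w(x_0,t_0)|.
$$
The last two terms are at most $C|t-t_0|^{\beta/2}+C|\bar x-x_0|^\beta\le Cr^\beta\le Cr^{\beta/2}$ (since we may assume $r\le 1$) by the previous corollary, using $|\bar x-x_0|\le|\bar x-x|+|x-x_0|\le 2|x-x_0|$.

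The first term is the heart of the matter. I would work on a parabolic cylinder $Q_\rho:=(B_\rho(\bar x)\cap\Omega)\times(t-\rho^2,t+\rho^2)$ with $\rho$ to be chosen and run a barrier argument. On the portion of $\partial_PQ_\rho$ that lies in $\partial\Omega$, the boundary H\"older estimate gives $|w(y,s)-w(\bar x,t)|\le C\rho^\beta$; on the remaining part of $\partial_PQ_\rho$, $|w(y,s)-w(\bar x,t)|\le 2\|w\|_\infty$. Construct a supersolution $\Phi$ of the linear parabolic operator $L$ satisfied by $w-w(\bar x,t)$, of the form $\Phi=A\rho^\beta+B\Psi$, where $\Psi$ is a nonnegative parabolic supersolution vanishing on $\partial\Omega\cap\overline{B_\rho(\bar x)}$ and bounded below by a positive constant on the inner part of $\partial_PQ_\rho$. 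The comparison principle applied to $\pm(w-w(\bar x,t))-\Phi$ then yields, at $(x,t)$, a bound
$$
|w(x,t)-w(\bar x,t)|\;\le\;A\rho^\beta\,+\,B\,\Psi(x,t),
$$
where $\Psi(x,t)$ is controlled by a Hopf-type estimate in terms of $d$ and $\rho$. Balancing the two contributions over $\rho$ yields a bound $\le C d^{\beta/2}\le C r^{\beta/2}$, which, combined with the split above, gives the lemma.

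The main obstacle is the explicit construction of the barrier $\Psi$ and the tracking of the exponent $\beta/2$ through the balancing; this is a delicate but standard Caffarelli-Cabr\'e/Krylov-type computation adapted to the parabolic setting. Alternatively, one may invoke a general boundary H\"older regularity theorem for linear uniformly parabolic equations with H\"older boundary data (for instance from Chapter~7 of Lieberman's book), applied directly to the equation for $w$ with its boundary H\"older bound from the previous corollary.
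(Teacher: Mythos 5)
Your overall strategy is the same as the paper's: differentiate the equation in $t$ so that $w=\dot u$ satisfies a uniformly parabolic linear inequality with bounded right-hand side, feed in the boundary--boundary H\"older bound from the preceding corollary, and conclude via a barrier and the parabolic comparison principle. Two points of set-up need repair, though both are minor. First, no sign condition on $f_u$ is assumed in this section, so you cannot keep $f_u w$ as a zero-order coefficient and quote the comparison lemma (which requires a negative zero-order term); absorb $f_u\dot u$ into the bounded source using $|\dot u|\le C$, exactly as the paper does when it writes $|\ddot u-\sum F_{r_{ij}}\dot u_{ij}|\le A$. Second, your cylinders $Q_\rho$ must not exit the time interval on which the corollary is available; as in the paper, one proves the estimate for $t,t_0\in(2\epsilon,T)$ and relabels $\epsilon$.

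The genuine gap is that the decisive step --- constructing the barrier and verifying that it produces the exponent $\beta/2$ --- is precisely what you defer (``the main obstacle is the explicit construction of $\Psi$ and the tracking of the exponent''), and that step is essentially the whole content of the lemma. The paper needs neither your three-term decomposition nor any balancing over $\rho$: after choosing coordinates at $x_0$ with $\partial\Omega$ the graph $x_N=P(x')$ and setting $Q(x')=P(x')-|x'|^2$, it uses the single explicit barrier $v=K_1\bigl(x_N-Q(x')\bigr)^{\beta/2}+K_2\bigl((x_N-Q(x'))^2+(t_0-t)\bigr)^{\beta/4}$ on $(\Omega\cap B_R)\times(\epsilon,t_0)$; the concavity correction $-|x'|^2$ and the powers $\beta/2$, $\beta/4$ make $\sum F_{r_{ij}}v_{ij}$ negative enough (only the ellipticity lower bound is used), the corollary makes $v$ dominate $\pm\bigl(\dot u-\dot u(0,t_0)\bigr)$ on the parabolic boundary, and one application of the comparison principle yields $|\dot u(x,t)-\dot u(0,t_0)|\le K\bigl(|x|+|t-t_0|^{1/2}\bigr)^{\beta/2}$, handling the spatial and temporal increments simultaneously. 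Your route can be completed --- indeed a correct balancing of $C\rho^{\beta}+C\,d/\rho$ at $\rho\sim d^{1/(1+\beta)}$ even gives the better exponent $\beta/(1+\beta)$ for the normal increment --- but only once you actually exhibit a supersolution $\Psi$ for an operator with merely bounded measurable coefficients that vanishes on the flat boundary portion (for instance a function of $x_N-Q(x')$ as above) and control the bounded source term, or else give a precise quantitative citation; as written, the proof stops exactly where the work begins.
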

 \begin{proof}
 By equation \eqref{parabollic}, we have
 \begin{equation}\label{eq.Ca.b-i-dot.A}
 |\ddot{u}-\sum\limits\dfrac{\partial F}{\partial r_{ij}}\dot{u}_{ij}|=
 |f_t(t,x,u)+\dot{u}f_u(t,x,u)|\leq A,
 \end{equation}
 where $A>0$ is a universal constant.\\ 
 Let $x_0\in\partial\Omega$ and $t_0\in (2\epsilon,T)$. We can choose  coordinates
  $(x_j)_{1\leq j\leq N}$ so that $x_0=0$ and the positive $x_N$ axis is the interior normal 
  direction of $\partial\Omega$ at $x_0$. We also assume that near $x_0$, $\partial\Omega$ is 
  represented as a graph
 \begin{center}
 $x_N=P(x')=\sum\limits_{j,k<N} P_{jk}x_jx_k+O(|x'|^3),$
 \end{center}
   where $x'=(x_1,...,x_{N-1})$.\\
 Let $Q(x')=P(x')-|x'|^2$.  We consider
 $$
 v=K_1(x_N-Q(x'))^{\beta /2}+K_2((x_N-Q(x'))^2+(t_0-t))^{\beta /4}.
 $$
 We have
 \begin{flushleft}
 $\begin{array}{ll}
 \dfrac{\partial^2(x_N-Q(x'))^{\beta /2}}{\partial x_i \partial x_j}
 &=\dfrac{\beta(\beta-2)}{4}(x_N-Q(x'))^{\beta/2-2}
 \dfrac{\partial (x_N-Q(x'))}{\partial x_i}
 \dfrac{\partial (x_N-Q(x'))}{\partial x_j}
 \\
 &+\dfrac{\beta}{2}(x_N-Q(x'))^{\beta/2-1}
 \dfrac{\partial^2 (x_N-Q(x'))}{\partial x_i\partial x_j},
 \end{array}$
 \end{flushleft}
 %&\leq \dfrac{\lambda\beta (\beta-2)}{6}(x_N-Q(x'))^{\beta /2-2}<0,
 and
 \begin{flushleft}
 $\dfrac{\partial^2 ((x_N-Q(x'))^2+t_0-t)^{\beta /4}}{\partial x_i \partial x_j}$\\
  $=\dfrac{\beta(\beta-4)}{4}((x_N-Q(x'))^2+t_0-t)^{\beta/4-2}
   (x_N-Q(x'))^2\dfrac{\partial (x_N-Q(x'))}{\partial x_i}
 \dfrac{\partial (x_N-Q(x'))}{\partial x_j} $\\
 $+\dfrac{\beta}{4}((x_N-Q(x'))^2+t_0-t)^{\beta/4-1}
 \dfrac{\partial^2 (x_N-Q(x'))^2}{\partial x_i \partial x_j}.$
 \end{flushleft}
 Hence, there exists $R>0$ satisfying, by $F_{r_{11}}\geq \lambda$,
 \begin{equation}\label{eq.Ca.b-i-dot.K1}
 \sum\limits_{i,j=1}^N \dfrac{\partial F}{\partial r_{ij}}
 \dfrac{\partial^2(x_N-Q(x'))^{\beta /2}}{\partial x_i \partial x_j}
 \leq \dfrac{\lambda\beta (\beta-2)}{6}(x_N-Q(x'))^{\beta /2-2}<0,
 \end{equation}
 and
 \begin{equation}\label{eq.Ca.b-i-dot.K2}
 \sum\limits_{i,j=1}^N\dfrac{\partial F}{\partial r_{ij}}
 \dfrac{\partial^2 ((x_N-Q(x'))^2+t_0-t)^{\beta /4}}{\partial x_ix_j}
= O(x_N-Q(x'))^{\beta /2-2}.
 \end{equation}
On the other hand, 
\begin{equation}\label{eq.Ca.b-i-dot.u.}
|\dot{u}-\dot{u}(0,t_0)|\;|_{\partial_P((\Omega\cap B_R)\times (\epsilon,t_0))}
=O(((x_N-Q(x'))^2+t_0-t)^{\beta /4}).
\end{equation}
 By \eqref{eq.Ca.b-i-dot.A},
 \eqref{eq.Ca.b-i-dot.K1}, \eqref{eq.Ca.b-i-dot.K2}, \eqref{eq.Ca.b-i-dot.u.},
  there exists $K_1,K_2>0$ such that
 \begin{center}
 $v|_{\partial_P((\Omega\cap B_R)\times
 (\epsilon,t_0))}\geq 
 \pm(\dot{u}-\dot{u}(0,t_0))|_{\partial_P((\Omega\cap B_R)\times (\epsilon,t_0))},$\\[6pt]
 $(\pm\ddot{u}-\dot{v})-\sum\dfrac{\partial F}{\partial r_{ij}}(\pm\dot{u}_{ij}-v_{ij})
 \leq A+\dfrac{K_1\lambda\beta(\beta-2)}{8}\leq 0.$
 \end{center}
 The comparison principle of parabolic type (\cite{Fried83}) states that
 \begin{Lem}
 Let $\Omega$ be a bounded domain of $\R^N$ and $T>0$. 
 Let $u,v\in C^{2;1}(\Omega\times (0,T])\cap C(\bar{\Omega}\times [0,T])$. Assume that
$$
\dfrac{\partial (u-v)}{\partial t}-\sum a_{ij}
\dfrac{\partial^2 (u-v)}{\partial x_i\partial x_j}-b.(u-v)\leq 0,
$$
where $a_{ij}, b\in C(\Omega\times (0,T))$, $(a_{ij}(x,t))$
are positive definite symmetric matrices and $b(z,t)<0$.
Then $(u-v) \leq max(0,\sup\limits_{\partial_P(\Omega\times (0,T))}(u-v)).$
 \end{Lem}
 Applying the comparison principle, we have
 $$
 (\dot{u}-\dot{u}(0,t_0))|_{(\Omega\cap B_R)\times (\epsilon,t_0)}\leq v|_{(\Omega\cap 
 B_R)\times (\epsilon,t_0)}.
 $$
 Hence there exists $K>0$ such that
 $$|\dot{u}(x,t)-\dot{u}(0,t_0)|\leq K(|x|+|t-t_0|^{1/2})^{\beta /2},$$
 where $x\in\Omega\times B_R$ and $\epsilon<t\leq t_0$.\\
Note that  $R$ is independent of $x_0$ and $K$ is independent of $t_0$. Then there exists
$C_{\epsilon,T}$ such that
$$\dfrac{|\dot{u}(x,t)-\dot{u}(x_0,t_0)|}{(|x-x_0|+|t-t_0|^{1/2})^{\beta /2}}
 \leq C_{\epsilon},
 \;\;\;\forall x\in\Omega , x_0\in\partial\Omega;
 \forall t,t_0\in (2\epsilon,T).$$
 \end{proof} 
 \begin{Lem}\label{b-i 1}
 There exists $C_{\epsilon,T}>0$ depending on 
$\lambda,\Lambda,\Omega,C,\epsilon,T$ and upper bound of 
$\|\varphi\|_{C^4}+\|F\|_{C^1}+\|f\|_{C^2}$ such that\\
 $$u_{\xi\xi}(x,t)-u_{\xi\xi}(x_0,t_0)
 \leq C_{\epsilon,T}(|x-x_0|+|t-t_0|^{1/2})^{\beta /2}$$
for any  $\xi\in\R^N,|\xi|=1,x\in\Omega, x_0\in\partial\Omega, \epsilon<t,t_0<T$.
Where $0<\beta<1$ is the constant in Lemma \ref{b-b}.
 \end{Lem}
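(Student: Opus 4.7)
The plan is to differentiate the equation twice in a tangential direction, exploit the concavity of $F$, and then reuse the barrier construction from Lemma \ref{b-i-dot} to transport the boundary H\"older estimate of Lemma \ref{b-b} to the interior at the cost of halving the exponent.

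\smallskip

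\textbf{Step 1: Differentiate the PDE twice.} Applying $\partial_{\xi}^2$ to \eqref{parabollic} and using that $F$ is concave in $D^2u$ yields
\[
\dot{w} - \sum_{i,j} F_{r_{ij}}(D^2u)\, w_{ij}
\;\leq\; f_{\xi\xi} + 2 f_{u\xi} u_{\xi} + f_{uu} u_{\xi}^2 + f_{u} u_{\xi\xi},
\]
where $w := u_{\xi\xi}$, because the second-order term $\sum F_{r_{ij}r_{kl}} u_{ij\xi}u_{kl\xi}$ is nonpositive. Under the a priori bounds on $|u|,|\nabla u|,|D^2u|$ and the assumed $C^2$-norm of $f$, the right-hand side is bounded by a universal constant $A>0$. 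Equivalently, $w$ satisfies a linear parabolic inequality with the same ellipticity constants $\lambda,\Lambda$ as $F$.

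\smallskip

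\textbf{Step 2: Boundary control of $w$.} Fix $x_0\in\partial\Omega$ and $t_0\in(\epsilon,T)$. By Lemma \ref{b-b},
\[
w(y,t) - w(x_0,t_0) \;\leq\; C_{\epsilon,T}\,(|y-x_0|+|t-t_0|^{1/2})^{\beta}, \qquad y\in\partial\Omega,\ t\in(\epsilon,T).
\]
On interior points of the parabolic boundary of a small cylinder $(\Omega\cap B_R)\times(\epsilon,t_0)$ around $(x_0,t_0)$, we only have the uniform bound $|w|\leq 2C$ from the hypothesis.

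\smallskip

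\textbf{Step 3: Barrier construction.} Choose coordinates so that $x_0=0$ and $\partial\Omega$ is locally the graph $x_N = P(x')$ with $P(x') = O(|x'|^2)$, and set $Q(x') = P(x') - |x'|^2$ so that $\{x_N\geq Q(x')\}\supset \Omega\cap B_R$ for some $R>0$ depending only on $\Omega$. Mimicking the barrier in Lemma \ref{b-i-dot}, put
\[
v(x,t) \;=\; K_1\,(x_N-Q(x'))^{\beta/2} + K_2\,\bigl((x_N-Q(x'))^2 + (t_0-t)\bigr)^{\beta/4}.
\]
The same computation as in Lemma \ref{b-i-dot} shows that $F_{r_{ij}}(D^2u)\, v_{ij}$ has a large negative principal term of order $(x_N-Q(x'))^{\beta/2-2}$ coming from the $K_1$-piece (because $\beta/2-2<0$ and $\lambda$-ellipticity applies), while $\dot v$ is bounded below. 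Thus for $K_1$ chosen large enough depending on $\lambda,\Lambda,A$ and the universal constants, we obtain
\[
\dot{v} - \sum F_{r_{ij}}\, v_{ij} \;\geq\; A
\]
on $(\Omega\cap B_R)\times(\epsilon,t_0)$. For $K_2$ chosen large depending on the uniform bound on $w$ and on the constants of Lemma \ref{b-b}, $v$ dominates $w(\,\cdot\,,\cdot)-w(x_0,t_0)$ on the parabolic boundary of $(\Omega\cap B_R)\times(\epsilon,t_0)$: on $\partial\Omega\cap B_R$ the $K_2$-term absorbs the $|y-x_0|^\beta$ contribution (since $\beta\leq\beta/2$ when $|y-x_0|\leq 1$), on $\partial B_R\cap\Omega$ the $K_1$-term dominates the uniform bound $2C$, and at $t=\epsilon$ the $K_2$-term handles the uniform bound together with the time difference.

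\smallskip

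\textbf{Step 4: Comparison principle and conclusion.} Applying the parabolic comparison principle (the same linear version used at the end of Lemma \ref{b-i-dot}) to $w-w(x_0,t_0)$ and $v$ yields
\[
u_{\xi\xi}(x,t) - u_{\xi\xi}(x_0,t_0) \;\leq\; v(x,t) \;\leq\; K\,(|x-x_0|+|t-t_0|^{1/2})^{\beta/2}
\]
on $(\Omega\cap B_R)\times(\epsilon,t_0)$. Since $R$ depends only on $\Omega$ and the constants are uniform in $(x_0,t_0)$, a standard covering plus the uniform bound on $D^2u$ for $|x-x_0|\geq R$ or $t>t_0$ extends the inequality to all $x\in\Omega$, $t\in(\epsilon,T)$.

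\smallskip

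\textbf{Main obstacle.} The delicate point is the barrier estimate in Step 3: verifying that the negative contribution of $F_{r_{ij}} v_{ij}$ from the $K_1$-term is strong enough to absorb the positive error $A$ coming from the right-hand side in Step 1, uniformly as $x_N - Q(x')\to 0$, and simultaneously arranging that $v$ dominates $w-w(x_0,t_0)$ on all three pieces of the parabolic boundary with exponent $\beta/2$. Concavity of $F$ is essential (it is what allows the one-sided estimate and is why no lower bound on $u_{\xi\xi}(x,t)-u_{\xi\xi}(x_0,t_0)$ is claimed), and the loss from $\beta$ to $\beta/2$ is intrinsic to mixing the tangential boundary Hölder bound with the interior barrier.
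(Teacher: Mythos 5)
Your proposal is correct and follows essentially the same route as the paper, which differentiates the equation twice, uses concavity of $F$ to get a one-sided parabolic inequality for $u_{\xi\xi}$ with bounded right-hand side, invokes Lemma \ref{b-b} for the boundary data, and then reuses the barrier $v$ and comparison argument of Lemma \ref{b-i-dot} (the paper states exactly this and omits the details you spell out). Only a cosmetic caveat: $\dot v$ is not literally bounded below near the boundary, but its negative contribution is of order $(x_N-Q(x'))^{\beta/2-2}$ and is absorbed by the $K_1$-term once $K_2$ is fixed first and $K_1$ is then taken large, which is the same bookkeeping implicit in Lemma \ref{b-i-dot}.
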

 \begin{proof}
 By the equation \eqref{parabollic}, we have
 $$\dot{u}_{\xi\xi}-\sum\dfrac{\partial F }{\partial r_{ij}}(u_{\xi\xi})_{ij}
 -f_u.u_{\xi\xi}
 =\sum\dfrac{\partial^2 F}{\partial r_{ij}\partial r_{kl}}(u_{\xi})_{ij}(u_{\xi})_{kl}
 +O(1)\leq O(1)$$
 By Lemma \ref{b-b}, we also obtain 
 $$ (u_{\xi\xi}(x,t)-u_{\xi\xi}(x_0,t_0))|_{\partial_P(\Omega\times (\epsilon,T))}
 =O(|x-x_0|+|t-t_0|^{1/2})^{\beta /2})$$
Then, the proof of Lemma \ref{b-i 1} is similar to the proof 
of Lemma \ref{b-i-dot} with the same type of fuction $v$.
 \end{proof}
 
 \begin{Lem}\label{b-i 2}
  There exists $C_{\epsilon,T}>0$ depending on 
$\lambda,\Lambda,\Omega,C,\epsilon,T$ and upper bound of 
$\|\varphi\|_{C^4}+\|F\|_{C^1}+\|f\|_{C^2}$ such that\\
  $$\|D^2u(x,t)-D^2u(x_0,t_0)\|\leq C_{\epsilon,T}(|x-x_0|+|t-t_0|^{1/2})^{\beta /2}$$
  for any $x\in\Omega, x_0\in\partial\Omega, \epsilon<t,t_0<T$,
where $0<\beta<1$ is the constant in Lemma \ref{b-b}.
 \end{Lem}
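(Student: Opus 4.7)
The plan is a standard Evans--Krylov type argument, using the concavity of $F$ to upgrade the one-sided Hölder bound of Lemma \ref{b-i 1} to a two-sided operator-norm bound on the Hessian increment. Set $A:=D^2u(x_0,t_0)$, $w:=D^2u(x,t)-A$, and abbreviate $\delta := |x-x_0|+|t-t_0|^{1/2}$. Since Lemma \ref{b-i 1} provides $\xi^T w\,\xi \leq C_{\epsilon,T}\delta^{\beta/2}$ for every unit $\xi\in\R^N$, we already have the matrix inequality $w\leq C_{\epsilon,T}\delta^{\beta/2}I$; it thus suffices to produce the matching lower bound $w\geq -C_{\epsilon,T}\delta^{\beta/2}I$.

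I would first check that $F\circ D^2u$ is itself $\beta/2$-Hölder continuous between interior and boundary points. Rewriting \eqref{parabollic},
\[
F(D^2u(x,t))-F(D^2u(x_0,t_0)) = \bigl(\dot u(x,t)-\dot u(x_0,t_0)\bigr) + \bigl(f(t_0,x_0,u(x_0,t_0))-f(t,x,u(x,t))\bigr).
\]
The first difference is controlled by $C_{\epsilon,T}\delta^{\beta/2}$ via Lemma \ref{b-i-dot}. For the second, the hypothesis $|\dot u|+|\nabla u|\leq C$ makes $u$ Lipschitz on $\bar\Omega\times[\epsilon,T]$, so smoothness of $f$ bounds the term by $C(|x-x_0|+|t-t_0|)\leq C'_{\epsilon,T}\delta^{\beta/2}$ on the bounded set under consideration. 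Hence $|F(D^2u(x,t))-F(D^2u(x_0,t_0))|\leq C_{\epsilon,T}\delta^{\beta/2}$.

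The Evans--Krylov step is then short. Concavity of $F$ yields the tangent inequality $F(A+w)-F(A)\leq F_{r_{ij}}(A)w_{ij}$, so combining with the previous bound gives $F_{r_{ij}}(A)w_{ij}\geq -C_{\epsilon,T}\delta^{\beta/2}$. Set $\tilde w := C_{\epsilon,T}\delta^{\beta/2}I - w$, which is positive semidefinite by Lemma \ref{b-i 1}. Contracting with $F_r(A)$ and using the ellipticity bounds \eqref{F},
\[
\lambda\,\mathrm{tr}(\tilde w)\leq F_{r_{ij}}(A)\tilde w_{ij}
= C_{\epsilon,T}\delta^{\beta/2}\,\mathrm{tr}(F_r(A)) - F_{r_{ij}}(A)w_{ij}
\leq (N\Lambda+1)\,C_{\epsilon,T}\delta^{\beta/2}.
\]
Since $\tilde w\geq 0$, its largest eigenvalue is bounded by its trace, so $\tilde w\leq C'_{\epsilon,T}\delta^{\beta/2}I$, i.e.\ $w\geq -C'_{\epsilon,T}\delta^{\beta/2}I$. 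Together with the upper bound coming from Lemma \ref{b-i 1}, this gives $\|w\|\leq C_{\epsilon,T}\delta^{\beta/2}$, which is exactly the claim.

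The main obstacle is the passage from a one-sided directional bound to a two-sided operator-norm bound; this is precisely where concavity of $F$ and uniform ellipticity enter in an essential way, and it is the only ingredient beyond Lemmas \ref{b-b}--\ref{b-i 1} that the argument requires.
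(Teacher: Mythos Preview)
Your proof is correct and follows essentially the same Evans--Krylov mechanism as the paper: both use Lemma~\ref{b-i-dot} (plus smoothness of $f$) to get H\"older continuity of $F(D^2u)$, then combine this with the one-sided bound from Lemma~\ref{b-i 1} and uniform ellipticity to control the full Hessian increment. The only cosmetic difference is that the paper applies \eqref{F} directly to the positive/negative eigenspace decomposition of $w$ (writing $F(A+w)-F(A)\le \Lambda\sum_{\lambda_i>0}\lambda_i+\lambda\sum_{\lambda_i<0}\lambda_i$, so concavity is not explicitly invoked at this step), whereas you pass through the concavity tangent inequality and the differential form of \eqref{F}; both variants are standard and equivalent here.
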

 \begin{proof}
 Let $\lambda_1,...,\lambda_N$ be eigenvalues of $D^2u(x,t)-D^2u(x_0,t_0)$. We have\\
 $$\|D^2u(x,t)-D^2u(x_0,t_0)\|\leq\sum|\lambda_i|.$$
 Moreover,
 \begin{flushleft}
 $\begin{array}{ll}
 \dot{u}(x,t)-f(t,x,u(x,t))&=F(D^2u(x,t))\\ &\leq F(D^2u(x_0,t_0))
 +\Lambda\sum\limits_{\lambda_i>0}\lambda_i+\lambda \sum\limits_{\lambda_i<0}\lambda_i\\
 &=\dot{u}(x_0,t_0)-f(t_0,x_0,u(x_0,t_0))
 +\Lambda\sum\limits_{\lambda_i>0}\lambda_i+\lambda \sum\limits_{\lambda_i<0}\lambda_i.\\
 \end{array}$
 \end{flushleft}
 Hence, by Lemma \ref{b-i-dot} , we have\\
 $$\Lambda\sum\limits_{\lambda_i>0}|\lambda_i|\geq 
 \lambda \sum\limits_{\lambda_i<0}|\lambda_i|
 -A(|x-x_0|+|t-t_0|^{1/2})^{\beta /2},$$
 where  $A>0$ is a universal constant.\\
 Then
 $$\|D^2u(x,t)-D^2u(x_0,t_0)\|\leq
  \frac{\Lambda+\lambda}{\lambda}\sum\limits_{\lambda_i>0}|\lambda_i|
 +\frac{A}{\lambda}(|x-x_0|+|t-t_0|^{1/2})^{\beta /2}.$$
 Note that
 $$
 \sum\limits_{\lambda_i>0}|\lambda_i|\leq N\max\{0,\lambda_1,...\lambda_N\}
 \leq N \max\{\sup\limits_{|\xi|=1}(u_{\xi\xi}(x,t)-u_{\xi\xi}(x_0,t_0)), 0\}.$$
 By Lemma \ref{b-i 1}, there exists  $C_{\epsilon,T}>0$ depending on 
 $\lambda,\Lambda, \Omega,C,\epsilon, T$ and 
 upper bound of $\|\varphi\|_{C^4}+\|F\|_{C^1}+\|f\|_{C^2}$ such that \\
  $$\|D^2u(x,t)-D^2u(x_0,t_0)\|\leq C_{\epsilon,T}(|x-x_0|+|t-t_0|^{1/2})^{\beta /2}$$
  for any $x\in\Omega, x_0\in\partial\Omega, \epsilon<t,t_0<T$.\\
 \end{proof}
 
 \begin{proof}[Proof of Theorem \ref{main.C2alpha}]
 We need to show that
 \begin{equation}\label{C2alpha.gol}
 \|D^2u(x,t_1)-D^2u(y,t_2)\|\leq C (|x-y|+|t_1-t_2|^{1 /2})^{\gamma},
 \end{equation}
 where $x,y\in\Omega$, $2\epsilon<t_1,t_2<T-\epsilon$. $C$ and
  $\gamma$ are universal constants.\\
 We can assume that $d_x:=d(x,\partial\Omega)\geq d_y:=d(y,\partial\Omega)$.\\
 If $|x-y|^2 +|t_1-t_2|\leq \min\{\frac{d_x^2}{4},\frac{\epsilon}{2}\})$, we denote
 $$v(\xi,t)=\dfrac{1}{a^2}\left( u(x+a.\xi, t_1+a^2t)
 -u(x,t_1)-a\sum u_k(x,t_1)\xi_k\right),$$
 where $a=\min\{ d_x, \epsilon^{1/2} \}$. Then $v\in C^{\infty}
 (\mathbb{B}\times (-1,1))$ satisfies
 $$\dot{v}=F(D^2v)+f(t_1+a^2t,x_1+a\xi,u(x_1+a\xi,t_1+a^2t))=F(D^2v)+\tilde{f}(t,\xi).$$
  It follows from the interior estimate
 (see the theorem 14.7 and the lemma 14.8 of \cite{Lieb96}) that 
 \begin{center}
 $\|v\|_{C^{2,\gamma}(\mathbb{B}_{1/2}\times (-1/2, 1/2))}
 \leq A (\|v\|_{C^2(\mathbb{B}\times (-1, 1))}+1),$
 \end{center}
 where $A$ is universal, $\gamma=\min\{\alpha,\beta /2 \}$, $\beta$ is the constant in Lemma
 \ref{b-b} and $\alpha$ is the constant in Theorem 14.7 of \cite{Lieb96}.\\
 Moreover
 \begin{flushleft}
 $\begin{array}{ll}
 |v(\xi,t)|&\leq \dfrac{|u(x+a\xi,t_1+a^2t)-u(x+a\xi,t_1)|}{a^2}\\[6pt]
 &+\dfrac{|u(x+a\xi,t_1)-u(x,t_1)-a\sum u_k(x,t_1)\xi_k|}{a^2}\\[6pt]
 &\leq \sup |\dot{u}|+\sup \|D^2 u\|,\\[6pt]
  \end{array}$
 $|\dot{v}(\xi,t)|=|\dot{u}(x+a\xi,t_1+a^2t)|\leq \sup |\dot{u}|,$\\[6pt]
  $\|D^2v(\xi,t)\|=\|D^2u(x+a\xi,t_1+a^2t)\|\leq \sup \|D^2u\|.$\\
 \end{flushleft}
 Hence
 \begin{center}
 $\|v\|_{C^{2,\gamma}(\mathbb{B}_{1/2}\times (-1/2, 1/2))}
 \leq B,$
 \end{center}
 where $B$ is universal.\\
 Then
 $$\|D^2u(x,t_1)-D^2u(y,t_2)\|\leq B (|x-y|+|t_1-t_2|^{1 /2})^{\gamma}.
 $$
 If $|x-y|^2+|t_1-t_2|\geq \frac{\epsilon}{2}$, then
 $$
 \|D^2u(x,t_1)-D^2u(y,t_2)\|\leq 2(\frac{\epsilon}{2})^{-\gamma/2}
 (\sup\|D^2u\|)(|x-y|+|t_1-t_2|^{1 /2})^{\gamma}.
  $$
 If $\frac{\epsilon}{2}>|x-y|^2+|t_1-t_2|\geq \frac{d_x^2}{4}$,
it follows from Lemma \ref{b-i 2} that
 \begin{flushleft}
 $\begin{array}{ll}
 \|D^2u(x,t_1)-D^2u(y,t_2)\|&\leq \|D^2u(x,t_1)-D^2u(x_0,t_1)\|
 +\|D^2u(x_0,t_1)-D^2u(y,t_2)\|\\
 &\leq C_{\epsilon,T}(|x-x_0|^{\beta/2}+(|x_0-y|+|t_1-t_2|^{1/2})^{\beta/2})\\
 &\leq C (|x-y|+|t_1-t_2|^{1/2})^{\beta /2}\\
 &\leq C (|x-y|+|t_1-t_2|^{1/2})^{\gamma}\\
 \end{array},$
 \end{flushleft}
 where $C_{\epsilon,T}$ is the constant in Lemma \ref{b-i 2}, $x_0\in\partial\Omega$
 satisfies $d_x=|x-x_0|$ and $C$ is universal.
 \end{proof}
 \subsection{Higher regularity}
 \begin{flushleft}
 Let $g\in C^{k+1,\alpha}(\bar{\Omega}\times [0,T))$, where $k\geq 0, 0<\alpha<1$.
 Let $F$ be a function defined on $Mat(N\times N,\R)\times\bar{\Omega}\times  [0,T)$ 
 such that $F(.,x,t)$ is concave and satisfies \eqref{F}.  Assume that 
 $F\in C^{k+2;k+1,\alpha}(Mat(N\times N,\R)\times\bar{\Omega}\times  [0,T))$, i.e., 
 the derivaties $D_r^iD_x^jD_t^lF$ are continuous for all $|i|\leq k+2, |j|+2l\leq k+1$ 
 and satisfy
 \end{flushleft}
  $$\|F\|_{C^{k+2;k+1,\alpha}(Mat(N\times N,\R)\times\bar{\Omega}\times  [0,T))}=
  \sum\limits_{|i|\leq k+2}\sup\limits_{r\in Mat(N\times N,\R)} 
  |D_r^iF(r,.)|_{C^{k+1,\alpha}(\bar{\Omega}\times  [0,T))}<\infty.$$
  We  consider the $C^{k+3,\alpha}$ regularity of a solution $u$ of the equation
 \begin{equation}
 \dot{u}=F(D^2u,x,t)+g(x,t).
 \end{equation}
 The following boundary estimates hold:
  \begin{Prop}\label{Ckalpha.prop1}
 Let $x_0\in\partial\Omega$, $k\geq 0$, $r>0$ and 
 $u\in C^{\infty}((\Omega\cap B_r(x_0))\times (0,T))\cap
 C^{k+2,\alpha}((\Omega\cap B_r(x_0))\times (0,T))$
 be a solution of 
 \begin{equation}\label{Evans_Krylov}
 \begin{cases}
 \dot{u}=F(D^2u,x,t)+g(x,t)\,\mbox{ on }\, (\Omega\cap B_r(x_0))\times (0,T),\\
 u=\varphi\,\mbox{ on }\, (\partial\Omega\cap B_r(x_0))\times (0,T),
 \end{cases}
 \end{equation}
  where $\varphi\in C^{k+3,\alpha}(\bar{\Omega}\times (0,T)$. Then
   there exists $r'\in (0,r)$
 depending on $r, \Omega$  such that 
  $u\in C^{3+k,\alpha}((\Omega\cap B_{r'}(x_0))\times (\epsilon,T'))$ for any
   $0<\epsilon<T'<T$. Moreover
  $$\|u\|_{C^{k+3,\alpha}((\Omega\cap B_{r'}(x_0))\times (\epsilon,T'))}
  \leq K,$$
  where $K>0$ depends on $\lambda, \Lambda, \alpha, \Omega,\epsilon, T',T, r, r', 
  \|u\|_{C^{k+2,\alpha}}, \|F\|_{C^{k+2;k+1,\alpha}}, \|g\|_{C^{k+1,\alpha}}$,
  \newline
  $\|\varphi\|_{C^{k+3,\alpha}}$.
 \end{Prop}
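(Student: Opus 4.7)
The plan is a standard boundary-regularity bootstrap: flatten the boundary, subtract the boundary datum, differentiate tangentially to obtain linear uniformly parabolic equations, apply boundary Schauder theory up to the flat face, and finally use the uniform ellipticity of $F$ together with the equation to recover the purely normal derivatives.

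I would start by flattening the boundary near $x_0$ via a smooth diffeomorphism $\psi$ sending $\Omega\cap B_r(x_0)$ onto a half-ball $B_R^+$ and $\partial\Omega\cap B_r(x_0)$ onto the flat face $\Gamma_R=\{y_N=0\}\cap B_R$. The change of variables $\tilde u=u\circ\psi^{-1}$ converts \eqref{Evans_Krylov} into $\dot{\tilde u}=\tilde F(D^2\tilde u,y,t)+\tilde g(y,t)$ on $B_R^+\times(0,T)$, with $\tilde F$ still concave in the matrix variable and satisfying \eqref{F} with new constants depending only on $\psi$ (hence on $\Omega$ and $r$), $\tilde F\in C^{k+2;k+1,\alpha}$, and $\tilde g\in C^{k+1,\alpha}$. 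Subtracting the boundary datum, $w=\tilde u-\tilde\varphi\circ\psi^{-1}$ satisfies $w\equiv 0$ on $\Gamma_R\times(0,T)$ and an equation of the same structural form.

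The main step is to control the tangential derivatives via linear Schauder theory. For $j<N$, put $w^{(j)}=\partial_{y_j}w$; differentiating the equation in $y_j$ yields
\begin{equation*}
\dot{w}^{(j)}=\sum_{i,l}a_{il}(y,t)\,w^{(j)}_{il}+b_j(y,t),
\end{equation*}
where $a_{il}(y,t)=(\partial\tilde F/\partial r_{il})(D^2\tilde u(y,t),y,t)$. By hypothesis $\tilde u\in C^{k+2,\alpha}$, so $a_{il},b_j\in C^{k+1,\alpha}$ with quantitative norms, while the lateral boundary value $w^{(j)}|_{\Gamma_R}$ is $C^{k+2,\alpha}$, being a tangential derivative of $\tilde\varphi\circ\psi^{-1}$. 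The boundary Schauder estimate for uniformly parabolic linear equations with smooth lateral boundary data (cf.\ Lieberman \cite{Lieb96}, Chapter~4) then gives $w^{(j)}\in C^{k+2,\alpha}(B_{R/2}^+\times(\epsilon,T'))$ with the advertised dependencies; the restriction to $t\ge\epsilon>0$ ensures that no initial-time compatibility condition is required, only interior-in-$t$ regularity on a slightly larger interval $(\epsilon/2,T')$. Differentiating the equation in $t$ treats $\dot w$ analogously. Iterating these tangential Schauder estimates controls, in $C^\alpha$, every derivative of $w$ of parabolic order at most $k+3$ whose multi-index contains at least one tangential slot.

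To close the argument I would recover the purely normal derivatives algebraically. Because $\partial\tilde F/\partial r_{NN}\ge\lambda>0$, the implicit function theorem provides
\begin{equation*}
w_{NN}=H\bigl(\dot w,y,t,w,Dw,(w_{ij})_{(i,j)\ne(N,N)}\bigr),
\end{equation*}
with $H$ as smooth as $\tilde F$. All arguments of $H$ except the explicit $(y,t)$ are $C^{k+1,\alpha}$ by the previous step, so $w_{NN}\in C^{k+1,\alpha}$. Proceeding by induction on the number of normal derivatives, taking $\partial_{y_N}^{\ell}$ of this identity for $1\le\ell\le k+1$ expresses $\partial_{y_N}^{\ell}w_{NN}$ as a universal polynomial in lower-order pure normal derivatives of $w$ (handled by the inductive hypothesis) and in $\partial_{y_N}^{s}w_{ij}$ with $s\le\ell$ and $(i,j)\ne(N,N)$; each such mixed term still carries a tangential slot and is therefore tangentially controlled. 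Thus $w\in C^{k+3,\alpha}(B_{R/2}^+\times(\epsilon,T'))$, and pulling back through $\psi$ yields the estimate on $(\Omega\cap B_{r'}(x_0))\times(\epsilon,T')$. The main obstacle is the quantitative boundary Schauder estimate for the linearized equation together with the careful tracking of Hölder norms through the boundary straightening; neither step is deep, but verifying the precise dependence of $K$ on $\lambda,\Lambda,\alpha,\Omega,\epsilon,T',T,r,r'$ and the stated norms requires painstaking bookkeeping.
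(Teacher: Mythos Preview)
Your proposal is correct and follows essentially the same strategy as the paper: flatten the boundary, linearize tangentially and in time, apply boundary Schauder theory up to the flat face, then recover the purely normal derivatives algebraically from the equation via $\partial F/\partial r_{NN}\ge\lambda$. The only minor differences are that the paper uses tangential difference quotients rather than direct differentiation, and organizes the passage from $C^{k+2,\alpha}$ to $C^{k+3,\alpha}$ as an induction on $k$ (applying the proposition itself to the linear equation satisfied by the difference quotients) instead of invoking higher-order linear Schauder in one shot; note also that your coefficients $a_{il},b_j$ are in $C^{k,\alpha}$, not $C^{k+1,\alpha}$, but this is exactly what Schauder needs.
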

 This regularity  is proved, for example, in \cite{Lieb96} (or \cite{GT83} ,
 \cite{CC95} for the elliptic version). For the reader's convenience, 
 we recall the arguments here.
 \begin{proof}
 Using a smooth diffeomorphism (as proof of Lemma \ref{b-b}), we can replace 
 $\Omega\cap B_r(x_0)$ by $B_4^+$ and replace $\partial\Omega\cap B_r(x_0)$ by
 $\Gamma_4$. We need to show that
  $u\in C^{k+3,\alpha}(B_1^+\times (\epsilon,T')).$\\
  Let $h>0$ be small and $e_l$ be the $l^{th}$ vector of the standard basis of $R^N$,
   $l<N$. We define
  \begin{flushleft}
  $\begin{array}{ll}
  a_{ij}^h(x,t) &=\int\limits_0^1\dfrac{\partial F}{\partial r_{ij}}
  (sD^2u(x+he_l,t)+(1-s)D^2u(x,t), x+she_l,t)ds ,\\
  g^h(x,t)&=\dfrac{g(x+he_l,t)-g(x,t)}{h} ,\\
  G^h(x,t)&=\int\limits_0^1 F_l(sD^2u(x+he_l,t)+(1-s)D^2u(x,t),x+she_l,t)ds,\\
  \varphi^h(x,t)&=\dfrac{\varphi(x+he_l,t)-\varphi(x,t)}{h},\\
  v^h(x,t)&=\dfrac{u(x+he_l,t)-u(x,t)}{h}.
  \end{array}$
\end{flushleft} 
 
For the convenience, we denote $Q_a=B_a^+\times (0,T)$ for any $a>0$. 
Then $$\|a_{ij}^h\|_{C^{k,\alpha}(Q_2)}+ \|g^h\|_{C^{k,\alpha}(Q_2)}+ \|G^h\|_{C^{k,\alpha}
(Q_2)}+  \|v^h\|_{C^{k+1,\alpha}(Q_2)}+ 
 \|\varphi^h\|_{C^{k+2,\alpha}(Q_2)}<A,$$ where $A>0$ depends
 only on  $\|u\|_{C^{k+2,\alpha}(Q_4)}, \|F\|_{C^{k+2;k+1,\alpha}(Q_4)},
  \|g\|_{C^{k+1,\alpha}(Q_4)},
  \|\varphi\|_{C^{k+3,\alpha}(Q_4)}.$ Moreover, 
  \begin{equation}
  \begin{cases}
  \dot{v}^h=\sum a_{ij}^hv_{ij}^h+g^h+G^h \,\mbox{ on } Q_2,\\
  v^h=\varphi^h \,\mbox{ on } \Gamma_2\times (0,T).
  \end{cases}
  \end{equation}
  If $k=0$, using a cutoff function and applying Schauder's global estimates 
 ( \cite{Fried83},page 65), we have
  \begin{equation}\label{quynap.smooth}
  \|v^h\|_{C^{k+2,\alpha}(B_1^+\times (\epsilon, T'))}\leq C,
  \end{equation}
  where $C>0$ depends on $A$ and $\epsilon, T'$.
  
  If $k>0$ and Proposition \ref{Ckalpha.prop1} is verified for $k-1$, 
  then applying the case $k-1$,
  we also obtain \eqref{quynap.smooth}.
  
  It follows that $u_l\in C^{k+2,\alpha}(B_1^+\times (\epsilon, T'))$ with 
  $\|u_l\|_{C^{k+2,\alpha}(B_1^+\times (\epsilon, T'))}\leq C.$
  
  By the same method, we can also show that  
  $\|\dot{u}\|_{C^{k+1,\alpha}(B_1^+\times (\epsilon, T'))}\leq C$.
  It remains to prove $\|u_{NNN}\|_{C^{k,\alpha}(B_1^+\times (\epsilon, T'))}\leq C$.
  On $B_1^+\times (\epsilon, T')$, we have
  \begin{center}
  $
  \dot{u}_N=\sum (\dfrac{\partial F}{\partial r_{ij}}(D^2u,x,t))u_{ijN}
  +F_N(D^2u,x,t)+g_N(x,t).
  $
  \end{center}
  Then
  \begin{center}
  $u_{NNN}=\dfrac{1}{\partial F /\partial r_{NN}}
  \left(\dot{u}_N-\sum\limits_{(i,j)\neq (N,N)}\dfrac{\partial F}{\partial r_{ij}}u_{ijN}
  -g_N \right).$
  \end{center}
  Note that $\frac{\partial F}{\partial r_{NN}}\geq \lambda >0$. 
  Hence, $u_{NNN}\in C^{k,\alpha}(B_1^+\times (\epsilon, T'))$ and 
  $\|u_{NNN}\|_{C^{k,\alpha}(B_1^+\times (\epsilon, T'))}$ 
  is bounded by a universal constant.
 \end{proof}
 Using the method of the proof above, we also obtain the interior estimates
 \begin{Prop}\label{Ckalpha.prop2}
 Let $x_0\in\Omega$ and $0<r<d(x_0, \partial\Omega)$. Let 
 $u\in C^{k+2,\alpha}(B_r(x_0)\times(0,T))$ be a solution of 
 \begin{equation}
 \dot{u}=F(D^2u,x,t)+g(x,t) \,\mbox{ on }\, B_r(x_0).
 \end{equation}
 Then $u\in C^{k+3,\alpha}(B_{r/2}(x_0)\times (\epsilon,T'))$ for any 
 $0<\epsilon<T'<T$. Moreover
 $$\|u\|_{C^{k+3,\alpha}(B_{r/2}(x_0)\times (\epsilon,T'))}\leq C,$$
 where $C>0$ depends on $\lambda, \Lambda, \alpha,\epsilon, T',T, r, 
  \|u\|_{C^{k+2,\alpha}}, \|F\|_{C^{k+2;k+1,\alpha}}, \|g\|_{C^{k+1,\alpha}}$.
 \end{Prop}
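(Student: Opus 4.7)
The plan is to mimic the proof of Proposition \ref{Ckalpha.prop1}, but using interior Schauder estimates instead of the global (up to the boundary) ones, and since there is no boundary in the region of interest, no diffeomorphism straightening argument is needed. The argument proceeds by differentiating the equation, freezing coefficients via difference quotients to reduce to a linear parabolic equation, and then bootstrapping.

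More concretely, first I would work on a smaller concentric ball, say, pick cutoff radii $r_0 < r_1 < r_2 = r$ and intermediate time intervals. For any spatial direction $e_l$ ($1 \leq l \leq N$) and small $h$, form the translate difference quotient $v^h(x,t) = (u(x+he_l,t)-u(x,t))/h$ exactly as in the previous proof, define the averaged coefficients
\[
a_{ij}^h(x,t) = \int_0^1 \frac{\partial F}{\partial r_{ij}}\bigl(sD^2u(x+he_l,t)+(1-s)D^2u(x,t),\, x+she_l,t\bigr)\,ds,
\]
together with the analogous $G^h$ and $g^h$. Then $v^h$ solves the linear uniformly parabolic equation $\dot v^h = \sum a_{ij}^h v^h_{ij} + g^h + G^h$ on $B_{r_1}(x_0)\times (0,T)$, with $C^{k,\alpha}$ bounds on all data controlled by $\|u\|_{C^{k+2,\alpha}}$, $\|F\|_{C^{k+2;k+1,\alpha}}$, $\|g\|_{C^{k+1,\alpha}}$. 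Applying interior Schauder estimates (Friedman, or Lieberman Ch.~4), possibly combined with a cutoff, yields a uniform $C^{k+2,\alpha}$ bound for $v^h$ on $B_{r_0}(x_0)\times(\epsilon,T')$; passing to the limit $h\to 0$ gives $u_l\in C^{k+2,\alpha}$ on that set with the desired estimate. Running this for every direction $l$ furnishes all mixed spatial third-order derivatives $D^j_x u$ with $|j|=k+3$ in $C^{\alpha}$.

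The time derivative is treated analogously: the same difference-quotient argument applied in the time direction (or simply differentiating the equation in $t$ and using the already-obtained spatial regularity) controls $\dot u$ in $C^{k+1,\alpha}$, and then higher mixed time derivatives follow by iterating. Finally, whenever we need a pure higher derivative $D^{k+3}_x u$ that has not yet been obtained directly, we solve for it algebraically from the equation $\dot u = F(D^2u,x,t)+g$ using that $\partial F/\partial r_{ij}$ is uniformly elliptic, exactly as the boundary proof solves for $u_{NNN}$. If $k>0$, an induction on $k$ based on Proposition \ref{Ckalpha.prop1}'s template removes the need to redo estimates at each order.

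The only subtle point, as in the boundary case, is bookkeeping: one must choose the nested radii and the cutoff so that the Schauder estimate on $v^h$ sees data on a slightly larger domain than the one on which it controls $v^h$, and one must verify uniform-in-$h$ bounds on $a_{ij}^h,g^h,G^h$ in the relevant H\"older norm. This is routine once the $k=0$ base case is in hand, and is why the author simply invokes ``the method of the proof above.'' The main obstacle is therefore purely notational, not conceptual; the genuinely delicate work has already been done in Theorem \ref{main.C2alpha} and Proposition \ref{Ckalpha.prop1}.
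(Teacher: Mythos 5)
Your proposal is correct and matches the paper, which gives no separate proof for this proposition but simply invokes "the method of the proof above" — i.e.\ the same difference-quotient plus Schauder scheme from Proposition \ref{Ckalpha.prop1}, with interior estimates replacing the boundary ones and no straightening diffeomorphism needed. Your observation that in the interior one can difference in all directions (so the algebraic step recovering the pure third normal derivative is not even needed) is a small but accurate simplification.
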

 Combining Proposition \ref{Ckalpha.prop1} and Proposition \ref{Ckalpha.prop2}, 
 we have the following
 \begin{Prop}\label{Cinftyalpha}
 Let $F,f,\varphi$ be functions defined as \ref{C2a.subsec}. Assume that
 $u\in C^{2,\alpha}(\Omega\times (0,T))$ is a solution of 
 \begin{equation}\label{Cinfty }
 \begin{cases}
 \dot{u}=F(D^2u)+f(t,x,u) \,\mbox{ on }\, \Omega\times (0,T),\\
 u=\varphi\,\mbox{ on }\, \partial\Omega\times (0,T).
 \end{cases}
 \end{equation}
 Then $u\in C^{\infty}(\bar{\Omega}\times (0,T))$.
 \end{Prop}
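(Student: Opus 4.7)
The plan is a standard bootstrap using the two local Schauder-type estimates, Propositions \ref{Ckalpha.prop1} and \ref{Ckalpha.prop2}, starting from the given $C^{2,\alpha}$ regularity. The first observation is that since $F$ depends only on $D^2u$, the equation can be rewritten as
$$\dot u = F(D^2u) + g(x,t), \qquad g(x,t):=f(t,x,u(x,t)),$$
and the hypothesis $F\in C^{k+2;k+1,\alpha}$ of the two propositions is satisfied trivially for every $k\geq 0$ (all $(x,t)$-derivatives of $F$ vanish, and $F$ is smooth in $r$). Thus the only piece of regularity that has to be tracked through the iteration is that of $g$, which is controlled by the current regularity of $u$.

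The argument then proceeds by induction on $k\geq 0$: assuming $u\in C^{k+2,\alpha}(\bar\Omega\times(\epsilon_0,T_0))$ for every $0<\epsilon_0<T_0<T$, I will show the analogous statement with $k$ replaced by $k+1$. The base case $k=0$ is exactly the hypothesis (using the paper's convention $C^{k,\alpha}(Q)=C^{k,\alpha}(\bar Q)$, so the hypothesis gives regularity up to $\partial\Omega$). Given the inductive hypothesis, the chain rule, together with the fact that each $\partial_t$ counts as two parabolic orders while each $\partial_{x_i}$ counts as one, expresses every parabolic derivative of $g$ of order at most $k+1$ as a smooth polynomial in derivatives of $f$ and in parabolic derivatives of $u$ of order at most $k+1$. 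These latter derivatives are Hölder continuous of exponent $\alpha$ because $u\in C^{k+2,\alpha}$, and since composition and products of smooth and $C^{\alpha}$ functions remain $C^{\alpha}$, we get $g\in C^{k+1,\alpha}(\bar\Omega\times(\epsilon_0,T_0))$.

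Now I apply Proposition \ref{Ckalpha.prop1} at each boundary point $x_0\in\partial\Omega$ to obtain a ball $B_{r'(x_0)}(x_0)$ and $C^{k+3,\alpha}$ regularity of $u$ on $(\Omega\cap B_{r'(x_0)}(x_0))\times(\epsilon_1,T_1)$ for any $\epsilon_0<\epsilon_1<T_1<T_0$, and Proposition \ref{Ckalpha.prop2} at each interior point. By compactness of $\bar\Omega$ only finitely many balls are needed, and pasting the local estimates together yields $u\in C^{k+3,\alpha}(\bar\Omega\times(\epsilon_1,T_1))$. Since $(\epsilon_1,T_1)$ can be chosen arbitrary in $(0,T)$, this completes the inductive step. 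Iterating over $k$ produces parabolic derivatives of every order that are Hölder continuous up to $\bar\Omega\times(\epsilon,T')$ for every $0<\epsilon<T'<T$, which is exactly $u\in C^{\infty}(\bar\Omega\times(0,T))$.

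No step is a genuine obstacle: the regularity gain is already encoded in Propositions \ref{Ckalpha.prop1}--\ref{Ckalpha.prop2}. The only point that requires any care is the chain-rule verification that $g=f(t,x,u(x,t))$ gains one parabolic order of Hölder regularity when $u$ does, which is routine but must be checked at each stage so that the hypothesis $g\in C^{k+1,\alpha}$ of the local propositions is available. The accompanying bookkeeping of successively shrinking space-time domains $(\epsilon_j,T_j)$ is also standard and does not interfere with the final conclusion on arbitrary compact subintervals of $(0,T)$.
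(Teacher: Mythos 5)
Your proposal is correct and is exactly the argument the paper intends: the paper states Proposition \ref{Cinftyalpha} simply as the combination of Propositions \ref{Ckalpha.prop1} and \ref{Ckalpha.prop2}, i.e.\ the bootstrap you carry out, with $g(x,t)=f(t,x,u(x,t))$ tracked through the chain rule and the space-time domains shrunk at each stage. The only cosmetic point is to note that iterating Proposition \ref{Ckalpha.prop2} alone already gives $u\in C^{\infty}(\Omega\times(0,T))$ in the interior, which supplies the interior-smoothness hypothesis required by Proposition \ref{Ckalpha.prop1} at each boundary step.
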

 \section{Proof of the main theorem}
 \label{pfmain}
 We recall the main theorem:
 \begin{The}[Main theorem]\label{maintheo}
 Let $\Omega$ be a bounded smooth strictly pseudoconvex domain of $\C^n$ and $T\in (0,\infty ]$.
 Let $u_0$ be a bounded plurisubharmonic function defined on a neighbourhood $\tilde{\Omega}$ of
 $\overline \Omega$. Assume that $\varphi\in C^{\infty}(\bar{\Omega}\times [0,T))$ and $f
 \in C^{\infty}([0,T)\times \bar{\Omega}\times \R )$  satisfying
 \begin{itemize}
 \item[(i)] $f_u\leq 0.$
 \item[(ii)] $\varphi(z,0)=u_0(z)$ for $z\in\partial\Omega$.
 \end{itemize}
 Then there exists a unique function $u\in C^{\infty}(\bar{\Omega}\times (0,T))$ such that
 \begin{equation}\label{psh.cmdlc}
 u(.,t) \mbox{ is a strictly plurisubharmonic function on } \Omega, \;\;\forall t\in (0,T),
 \end{equation}
 \begin{equation}\label{KRF.cmdlc}
\dot{u}= \log\det (u_{\alpha\bar{\beta}})+f(t,z,u) \mbox{ on } \Omega\times (0,T),
\end{equation}
\begin{equation}\label{bien.cmdlc}
u=\varphi \mbox{ on } \partial\Omega\times (0,T),
\end{equation}
\begin{equation}\label{initial.cmdlc}
\lim\limits_{t\rightarrow 0} u(z,t)= u_0(z)\;\;\forall z\in\bar{\Omega}.
 \end{equation}
 Moreover,  $u\in L^{\infty}(\bar{\Omega}\times [0,T'))$ for any $0<T'<T$, and
 $u(.,t)$  also converges to $u_0$ in capacity when $t\rightarrow 0$.\\
 If $u_0\in C(\tilde{\Omega})$ then $u\in C(\bar{\Omega}\times [0,T))$.
 \end{The}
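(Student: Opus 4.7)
The plan follows the six-step strategy sketched in Section 1. The first step is to construct approximating smooth problems to which Theorem \ref{houli} applies. Using standard convolution with a mollifier on a slightly larger domain $\tilde\Omega \Supset \bar\Omega$, one produces a decreasing sequence of smooth strictly plurisubharmonic functions $u_{0,m}$ on $\bar\Omega$ with $u_{0,m}\searrow u_0$ pointwise and $\|u_{0,m}\|_{L^\infty}\leq\|u_0\|_{L^\infty}$. I then modify $\varphi$ near $t=0$ to obtain $\varphi_m\in C^\infty(\bar\Omega\times[0,T))$ satisfying $\varphi_m(\cdot,0)=u_{0,m}$ on $\partial\Omega$, the compatibility condition $\dot\varphi_m(z,0)=\log\det(u_{0,m})_{\alpha\bar\beta}+f(0,z,u_{0,m})$ on $\partial\Omega$, and $\varphi_m\equiv\varphi$ on $\partial\Omega\times[\epsilon_m,T)$ for some $\epsilon_m\searrow 0$, with $\|\varphi_m\|_{C^4(\bar\Omega\times[\epsilon,T'])}$ bounded uniformly in $m$ whenever $\epsilon>\epsilon_m$ and $T'<T$. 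Theorem \ref{houli} then produces smooth solutions $u_m$ of the approximating problems.

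The second step invokes the a priori estimates. The uniform bound on $u_m(\cdot,0)=u_{0,m}$ and on $\|\varphi_m\|_{C^4}$ lets me apply Theorem \ref{Main}, yielding $m$-independent bounds on $|u_m|$, $|\dot u_m|$, $|\nabla u_m|$ and $\Delta u_m$ on $\bar\Omega\times(\epsilon,T')$. Combined with the Monge-Amp\`ere equation, the bounds on $\dot u_m$ and $\Delta u_m$ give uniform two-sided bounds on the eigenvalues of $(u_{m,\alpha\bar\beta})$, so Theorem \ref{main.C2alpha} delivers uniform $C^{2,\alpha}$ bounds and Proposition \ref{Cinftyalpha} upgrades these to uniform $C^k$ bounds for every $k$. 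A diagonal Arzel\`a-Ascoli extraction then produces a subsequence converging in $C^k(\bar\Omega\times(\epsilon,T'))$ for every $k,\epsilon,T'$ to some $u\in C^\infty(\bar\Omega\times(0,T))$ satisfying \eqref{psh.cmdlc}, \eqref{KRF.cmdlc}, and \eqref{bien.cmdlc}; the $L^\infty$ bound also passes to the limit.

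The main obstacle is the initial condition \eqref{initial.cmdlc}. For the upper bound, applying Corollary \ref{compa} between $u$ and the solution $\tilde u_k$ produced from the smooth strictly plurisubharmonic initial data $u_{0,k}$ gives $u\leq\tilde u_k$, so $\limsup_{t\to 0^+}u(z,t)\leq u_{0,k}(z)\searrow u_0(z)$. The lower bound is more delicate because $u_0$ is only bounded and plurisubharmonic, not continuous. The idea is to use the submean-value inequality: for each fixed $z_0\in\Omega$ and each small ball $B(z_0,r)\Subset\Omega$, plurisubharmonicity of $u(\cdot,t)$ gives $u(z_0,t)\geq$ no lower bound directly, so instead one uses the reverse relation via subsolutions. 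Concretely, I would combine the subsolution $\underline{u}_m$ constructed in Section \ref{subsolution} with Demailly-type regularization from below of $u_0$, picking for each $\eta>0$ a smooth strictly plurisubharmonic function $w_\eta$ on a neighborhood of $\bar\Omega$ satisfying $w_\eta\leq u_0$ and $w_\eta(z_0)\geq u_0(z_0)-\eta$, then solving the approximating problem with initial data adjusted to match boundary/compatibility while being $\leq u_0$ on $\Omega$, and applying Corollary \ref{compa} the other way to deduce $\liminf_{t\to 0^+}u(z_0,t)\geq u_0(z_0)-\eta$. Convergence in capacity then follows from pointwise convergence plus the uniform $L^\infty$ bound on $\{u(\cdot,t)\}_{0<t<T'}$ via the standard characterization for bounded plurisubharmonic sequences.

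Uniqueness is a direct application of Corollary \ref{compa}: if $u,v$ are two solutions, comparison on $\Omega\times(\epsilon,T')$ combined with \eqref{initial.cmdlc} and continuity of $u,v$ at $t=\epsilon$ forces $u\equiv v$ after sending $\epsilon\to 0^+$. When $u_0\in C(\tilde\Omega)$, the regularizations $u_{0,m}$ converge \emph{uniformly} to $u_0$, and the same sandwich upgrades the pointwise convergence $u(\cdot,t)\to u_0$ to uniform convergence, delivering $u\in C(\bar\Omega\times[0,T))$.
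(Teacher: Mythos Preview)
Your overall architecture matches the paper, but there are two genuine gaps, both stemming from the same oversight: Corollary~\ref{compa} requires the functions being compared to lie in $C(\bar\Omega\times[0,T))$, and the limit $u$ you construct is \emph{not} known to be continuous at $t=0$. Every time you write ``apply Corollary~\ref{compa} between $u$ and \ldots'' you are using a tool that is not available.

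\textbf{The initial condition.} For the upper bound you cannot compare $u$ directly with $\tilde u_k$; the paper instead compares the approximants $u_m$ with $u_k$ for $m>k$ (both are continuous up to $t=0$), obtains $u_m\le u_k+2\epsilon$, and only then lets $m\to\infty$. For the lower bound your ``Demailly-type regularization from below'' is the wrong idea: for a merely bounded psh function $u_0$ there is in general no smooth psh $w_\eta\le u_0$ with $w_\eta(z_0)\ge u_0(z_0)-\eta$ at \emph{every} point $z_0$ (the obstruction lives on a pluripolar set, but the theorem asserts \eqref{initial.cmdlc} pointwise everywhere). The paper's device is different and simpler: it uses the approximation from \emph{above}, setting $v_m=u_{0,m}+a\rho-At$; this is a smooth subsolution with $v_m\le u_m$ on the parabolic boundary (up to errors $\epsilon_m\sup|g_m|+\delta_m\to 0$), hence $v_m\le u_m+o(1)$ everywhere, and sending $m\to\infty$ gives $u(z,t)\ge u_0(z)+a\rho(z)-At$ for all $z$, then $a\to 0$.

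\textbf{Uniqueness.} This is not a ``direct application'' of Corollary~\ref{compa}. Given two solutions $u,v$, neither is continuous at $t=0$, so comparison on $\Omega\times(\epsilon,T')$ would require $u(\cdot,\epsilon)\le v(\cdot,\epsilon)$, which you do not know. The paper's argument is substantially more delicate: first it proves $v(z,t)\ge u_0(z)-\epsilon-At$ by comparing the time-shift $v(\cdot,t+\tfrac{1}{m})$ with the subsolution $v(\cdot,\tfrac{1}{m})+\epsilon\rho-At$; then, using Hartogs' lemma on the family $\{u(\cdot,t)\}$ together with the harmonic extension $h$ of $\varphi$ near $\partial\Omega$, it finds for each $m$ an index $k_m>m$ with $v(\cdot,\tfrac{1}{m})\ge u(\cdot,\tfrac{1}{k_m})-3\epsilon$ on all of $\Omega$; finally it compares $u(\cdot,t+\tfrac{1}{k_m})-\epsilon t$ with $v(\cdot,t+\tfrac{1}{m})$ via Corollary~\ref{compa} (now legitimately, since both are smooth up to $t=0$) and lets $m\to\infty$, $\epsilon\to 0$. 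Your sketch omits this entire mechanism.
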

 \begin{proof}
 \begin{flushleft}
Replacing $T$ by $0<T'<T$, we can assume that $T<\infty$ and there exists 
$C_{\varphi}$ such that
\end{flushleft}
%\begin{equation}\label{cf.cmdlc}
%\|f\|_{C^2((0,T)\times\Omega \times \R)}\leq C_f
%\end{equation}
\begin{equation}\label{cphi.cmdlc}
\|\varphi\|_{C^4(\Omega\times (0,T))}\leq C_{\varphi}.
\end{equation}
We can also assume that $\|f\|_{C^2([0,T)\times\bar{\Omega}\times [-M,M])}<\infty$ for any
$M>0$.\\
 \textbf{Existence of a solution.}\\
 Using the convolution of $u_0+\frac{|z|^2}{m}$ with smooth kernels, 
 we can take $u_{0,m}\in C^{\infty}
 (\bar{\Omega})$ such that
 $$u_{0,m}\searrow u_0,$$
 $$dd^cu_{0,m}\geq \frac{1}{m}dd^c|z|^2.$$
 Note that $u_0|_{\partial\Omega}$ is continuous. Then 
 \begin{equation}\label{eq.deltam}
 \delta_m=\sup\limits_{z\in\partial\Omega}(u_{0,m}(z)-u_0(z))
 \stackrel{m\rightarrow\infty}{\longrightarrow} 0.
 \end{equation}
 We define $g_m\in C^{\infty}(\bar{\Omega})$ and
  $\varphi_m\in C^{\infty}(\bar{\Omega}\times [0,T))$ by\\
  $$g_m=-\log\det (u_{0,m})_{\alpha\bar{\beta}}+f(0,z,u_{0,m}),$$
  $$\varphi_m=\zeta(\frac{t}{\epsilon_m}) (tg_m+u_{0,m})+(1-\zeta(\frac{t}
  {\epsilon_m}))\varphi,$$
  where $\zeta$ is a smooth funtion on $\R$ such that $\zeta$ is decreasing,
   $\zeta|_{(-\infty,1]}=1$ and $\zeta|_{[2,\infty)}=0$. $\epsilon_m>0$ are 
 chosen such that  the sequences $\{\epsilon_m\}$,
 $\{\epsilon_m\sup|g_m|\}$ are decreasing to $0$
 and $\zeta(\frac{t}{\epsilon_m})(u_{0,m}(z)-\varphi (z,t))\geq 0$ for any $m$.\\
  Then $\varphi_m$ converges pointwise to $\varphi$ on $\partial\Omega\times [0,T)$ and for any 
  $0<\epsilon<T$, there exists $m_{\epsilon}>0$ such that 
  $\varphi_m|_{\bar{\Omega}\times (\epsilon,T)}=\varphi|_{\bar{\Omega}\times (\epsilon,T)},
 \; \forall m>m_{\epsilon}.$\\
  %$$\sup\limits_{(\epsilon,T')}(\|\varphi_m(.,t)\|_{C^4(\Omega)}
 % +\|\dot{\varphi}_m(.,t)\|_{C^4(\Omega)})\leq C_{\varphi}$$
  %for $m$=1,2...\\
   Moveover,
   \begin{center}
 $\varphi_m(z,0)=u_{0,m}(z)\;\; ,$\\
 $\dot{\varphi}_m=\log\det(u_{0,m})_{\alpha\bar{\beta}}+f(t,z,u_{0,m}),$\\
 \end{center}
 where $(z,t)\in\partial\Omega\times \{ 0 \}$.\\
  By the theorem of Hou-Li, there exists $u_m\in C^{\infty}(\Omega\times (0,T))
 \cap C^{2;1}(\bar{\Omega}\times [0,T))$ satisfying 
 \begin{equation}\label{KRFM}
\begin{cases}
\begin{array}{ll}
\dot{u}_m=\log\det (u_m)_{\alpha\bar{\beta}}+f(t,z,u_m)\;\;\;&\mbox{on}\;\Omega\times (0,T),\\
u_m=\varphi_m&\mbox{on}\;\partial\Omega\times [0,T),\\
u_m=u_{0,m}&\mbox{on}\;\bar{\Omega}\times\{ 0\}.\\
\end{array}
\end{cases}
\end{equation}
  Applying Corollary \ref{compa} for $u_1$ and $u_m$,  we see that the functions
   $u_m$ are uniformly bounded
  by a constant $C_u>0$.
% $$\sup\limits_{\Omega\times (0,T)} |u_m-u_k|\leq
 %max\{\sup\limits_{\partial\Omega\times (0,T)} |\varphi_m-\varphi_k| ,
 %\sup\limits_{\Omega}|u_{0,m}-u_{0,k}|\}$$
 %for any $m,k\in\N$.\\
 Then we can assume that $\|f\|_{C^2((0,T)\times\Omega\times \R)}\leq C_f$.
  Applying Theorem \ref{Main} on $\Omega\times (\frac{\epsilon}{2},T)$, we obtain
 $$\|u_m\|_{C^2(\Omega\times (\epsilon,T))}\leq C,$$
 where $C=C(\epsilon, T,\Omega,C_f,C_{\varphi}, C_u)$, $m$ is large enough.\\
 %Then there exists $u\in C(\bar{\Omega}\times [0,T))$ such that $u_m\rightrightarrows u$. Clearly, $u|_{\Omega\times \{ 0 \} }
% =u_0$ and $u|_{\partial\Omega\times (0,T)}=\varphi|_{\partial\Omega\times (0,T)}.$\\ 
 It follows from the $C^{2,\alpha}$ estimates in Section \ref{C2a} that for any 
  $0<\epsilon<T'<T$, there exist
 $M=M(\epsilon,T', C,\Omega,C_{\varphi},C_f)$ and $0<\gamma<1$  such that
 $$\|u_m\|_{C^{2,\gamma}(\bar{\Omega}\times (\epsilon, T))}\leq M.$$
 By Ascoli's theorem, there exists $u\in C^{2,\gamma/2}(\bar{\Omega}\times (0,T))$ 
 such that
 \begin{equation}\label{defineu.cmdlc}
 u_{m_k}\stackrel{C^{2,\gamma/2}(\bar{\Omega}\times (\epsilon,T))}{\longrightarrow}u.
 \end{equation}
 Thus $u$ satisfies \eqref{psh.cmdlc}, \eqref{KRF.cmdlc} and \eqref{bien.cmdlc}. By 
  Proposition \ref{Cinftyalpha} 
 %%%GIVE A REFERENCE FOR EVANS-KRYLOV
 we have $u\in C^{\infty}(\bar{\Omega}\times (0,T))$.\\
  Clearly, $u$ is bounded.  We need to show the convergence of $u(.,t)$ when $t\rightarrow 0$.\\
  \textit{Step 1: $\liminf\limits_{t\rightarrow 0} u(z,t)\geq u_0(z).$}\\
  By \eqref{defineu.cmdlc},
 there exists a subsequence of $(u_m)$, also denoted by $(u_m)$, which converges pointwise to $u$
 on $\bar{\Omega}\times (0,T)$.\\
 For any $a>0$, there exists $A>0$ such that $\forall m>0, v_m=u_{0,m}+a\rho-At$ satisfies
\begin{equation}\label{eq.sualoi1.cmdlc}
 \begin{cases}
 \dot{v}_m\leq \log\det (v_m)_{\alpha\bar{\beta}}+f(t,z,v_m),\\
 v_m|_{\partial_P(\Omega\times (0,T))}\leq u_m|_{\partial_P(\Omega\times (0,T))}+
 \epsilon_m\sup|g_m|+\delta_m,\\
 \end{cases}
 \end{equation}
 where $\rho\in C^{\infty}(\bar{\Omega})$ is a non-positive strictly plurisubharmonic function
 on $\Omega$. \\
 It follows from Corollary \ref{compa} that 
 $$v_m\leq u_m+\epsilon_m\sup|g_m|+\delta_m.$$
 Hence
 \begin{equation}\label{convergeq.cmdlc}
 u(z,t)\geq \lim\limits_{m\rightarrow \infty}(v_m(z,t)-\epsilon_m\sup|g_m|-\delta_m)
 =u_0(z)+a\rho (z)-At.
 \end{equation}
 Then we have
 $$
 \liminf\limits_{t\rightarrow 0} u(z,t)\geq u_0(z)+a\rho (z).
 $$
 When $a\rightarrow 0$, we obtain
 \begin{equation}\label{liminf.cmdlc}
 \liminf\limits_{t\rightarrow 0} u(z,t)\geq u_0(z).
 \end{equation}
\textit{Step 2: $\limsup\limits_{t\rightarrow 0} u(z,t)\leq u_0(z).$}\\
Let $\epsilon>0$. Assume that $m_0>0$ satisfies $\epsilon_{m_0}\sup|g_{m_0}|\leq\epsilon$.\\
For any $m>k>m_0$, we have
\begin{flushleft}
$\begin{array}{ll}
u_{0,m}-u_{0,k}&\leq 0;\\
\varphi_m-\varphi_k&=\zeta(\frac{t}{\epsilon_m})(u_{0,m}-\varphi)
-\zeta (\frac{t}{\epsilon_k})(u_{0,k}-\varphi)\\
&+tg_m\zeta(\frac{t}{\epsilon_m})-tg_k\zeta(\frac{t}{\epsilon_k})\\
&\leq \zeta(\frac{t}{\epsilon_k})(u_{0,m}-\varphi)-\zeta (\frac{t}{\epsilon_k})
(u_{0,k}-\varphi)+2\epsilon \\
&\leq \zeta(\frac{t}{\epsilon_k})(u_{0,m}-u_{0,k})+2\epsilon \\
&\leq 2\epsilon .
\end{array}$
\end{flushleft}
It follows Corollary \ref{compa} that 
$$u_m\leq u_k+2\epsilon .$$
Hence
\begin{equation}\label{converleq.cmdlc}
u(z,t)=\lim\limits_{m\rightarrow \infty} u_m(z,t)\leq u_k(z,t)+2\epsilon .
\end{equation}
Then we have
$$\limsup\limits_{t\rightarrow 0} u(z,t)\leq u_{0,k}(z)+2\epsilon .$$
When $k\rightarrow\infty$ and $\epsilon\rightarrow 0$, we obtain
\begin{equation}\label{limsup.cmdlc}
\limsup\limits_{t\rightarrow 0} u(z,t)\leq u_0(z).
\end{equation}
Combining \eqref{liminf.cmdlc} and \eqref{limsup.cmdlc}, we obtain \eqref{initial.cmdlc}.\\
\textit{Step 3: Convergence in capacity.}\\
 The bounded plurisubharmonic function $u_0$ is continuous outside sets of arbitrarily small capacity.
  Then the convergence in capacity is implied by \eqref{convergeq.cmdlc}, \eqref{converleq.cmdlc} and 
 Hartogs lemma (Lemma 90 of \cite{Ber13}) .\\ 
If $u_0\in C(\tilde{\Omega})$ then $u_{ 0,m}$ and $\varphi_m$  converge uniformly,
 respectively, to $u_0$ and $\varphi$. It follows Corollary \ref{compa} that $u_m$  converges 
 uniformly to  $u$. So $u$ is continuous on $\bar{\Omega}\times [0,T)$.\\
 \textbf{Uniqueness of the solution.}\\
 Let $u, v\in C^{\infty}(\bar{\Omega}\times (0,T))$ be functions satisfying \eqref{psh.cmdlc},
 \eqref{KRF.cmdlc}, \eqref{bien.cmdlc}, \eqref{initial.cmdlc}. Let $\epsilon>0$. We need to 
 show that $u\leq v+(t+3)\epsilon$.\\
 \textit{Step 1.} $\exists A>0 , v(z,t)\geq u_0(z)-\epsilon-At.$\\
 For $m>0$, we denote $v_m(z,t)=v(z,t+\frac{1}{m})$. Then $v_m$ is the solution of 
 \begin{equation}
 \begin{cases}
 \dot{v}_m=\log\det(v_m)_{\alpha\bar{\beta}}+f(t+\frac{1}{m},z,v_m)\mbox{ on } \Omega\times (0,T-
 \frac{1}{m}),\\
 v_m(z,t)=\varphi(z,t+\frac{1}{m}) \mbox{ on } \partial\Omega\times (0,T-\frac{1}{m}).
 \end{cases}
 \end{equation}
 Let $\rho\in C^{\infty}(\bar{\Omega})$ be a non-positive strictly plurisubharmonic function 
 on $\Omega$ such that $\inf\rho=-1$. Then there exists $A>0$ depending only on
 $\epsilon, \rho, \|\varphi\|_{C^1}, \sup f(t,z,\sup \varphi)$ such that
 \begin{equation}
 \begin{cases}
 \dot{w}_m\leq \log\det(w_m)_{\alpha\bar{\beta}}+f(t+\frac{1}{m},z,w_m)\mbox{ on } \Omega\times (0,T-
 \frac{1}{m}),\\
 w_m(z,t)\leq \varphi(z,t+\frac{1}{m}) \mbox{ on } \partial\Omega\times (0,T-\frac{1}{m}),
 \end{cases}
 \end{equation}
 where $w_m =v(z,\frac{1}{m})+\epsilon\rho-At$.\\
 Applying Corollary \ref{compa}, we have $v_m\geq w_m$. When $m\rightarrow\infty$, we obtain
 $$v(z,t)\geq u_0(z)+\epsilon\rho (z)-At\geq u_0(z)-\epsilon-At.$$
 \textit{Step 2. }$\exists m_0>0, \forall m>m_0, \exists k_m>m, v(z,\frac{1}{m})\geq -3\epsilon
 +u(z,\frac{1}{k_m})$.\\
 Step 1 implies that $v$ is bounded. Then we can assume that 
 $\|f\|_{C^2([0,T)\times\bar{\Omega}\times\R)}<\infty$.\\
 By step 1, we have
 $$v(z,\frac{1}{m})+\epsilon+\frac{A}{m}\geq u_0(z)=\lim\limits_{t\rightarrow 0}u(z,t).$$
 Applying Hartogs lemma, for any $K\Subset\Omega$ there exists $k_{m,K}>m$ such that
 \begin{equation}\label{trongK.step2.uni}
 u(z,\frac{1}{k_{m,K}})\leq v(z,\frac{1}{m})+2\epsilon+\frac{A}{m}\;\;\forall z\in K.
\end{equation}  
Let $m_0\geq \frac{1}{\epsilon}\max\{1, A, \|f\|_{C^2},\|h\|_{C^2}\}$, where
$h\in C^{\infty}(\bar{\Omega}\times [0,T))$ is a spatial harmonic function such that
$h|_{\partial\Omega\times (0,T)}=\varphi|_{\partial\Omega\times (0,T)}$.\\
For any $m>m_0$, let $K=K_m\Subset\Omega$ such that 
$$v(z,\frac{1}{m})+\epsilon\geq h(z,\frac{1}{m})\;\;\forall z\in\Omega\setminus K.$$
Let $k_m=k_{m,K_m}$. Then
\begin{equation}\label{ngoaiK.step2.uni}
v(z,\frac{1}{m})\geq -2\epsilon + h(z,\frac{1}{k_m})\geq -2\epsilon +u(z,\frac{1}{k_m})\;\;\forall
z\in \Omega\setminus K.
\end{equation}
Combining \eqref{trongK.step2.uni} and \eqref{ngoaiK.step2.uni}, we obtain
$$v(z,\frac{1}{m})\geq -3\epsilon+u(z,\frac{1}{k_m})\;\;\forall z\in\Omega.$$
\textit{Step 3. Conclusion.}
\begin{flushleft}
Let $u_m(z,t)=u(z,t+\frac{1}{k_m})-\epsilon t$. For $m>m_0$, we have
\end{flushleft}
\begin{equation}
\begin{cases}
\dot{v}_m=\log\det (v_m)_{\alpha\bar{\beta}}+f(t+\frac{1}{m},z,v_m)\geq 
\log\det (v_m)_{\alpha\bar{\beta}}+f(t+\frac{1}{k_m},z,v_m)-\epsilon,\\
\dot{u}_m\leq \log\det (u_m)_{\alpha\bar{\beta}}+f(t+\frac{1}{k_m},z,u_m)-\epsilon.
\end{cases}
\end{equation}
Applying Corollary \ref{compa}, we have
$$(u_m-v_m)\leq\sup\limits_{\partial_P(\Omega\times (0,T-\frac{1}{m}))}(u_m-v_m)
\leq 3\epsilon $$
When $m\rightarrow \infty$, we have
$$u(z,t)-v(z,t)-\epsilon t=\lim\limits_{m\rightarrow \infty}(u_m(z,t)-v_m(z,t))\leq
 3\epsilon.$$ 
 When $\epsilon\rightarrow 0$, we obtain
 $$u(z,t)\leq v(z,t).$$
 Since the roles of $u$ and $v$ are symmetric, $v(z,t)\leq u(z,t)$. Then $u=v$.
 \end{proof}
 \section{Further directions}
 In this section, we discuss further questions in the same general directions as our result.
 On compact K\"ahler manifolds, the corresponding problem was solved in the case where 
 $f=0$ and $u_0$ has zero Lelong numbers. In that case, there exists a solution $u$ satisfying 
 $u(.,t)\rightarrow u_0$ in $L^1$ (see \cite{GZ13}), and the solution is unique
 (see  \cite{DL14}). 
 It is natural to ask whether the same result holds  
 for a domain in $\C^n$. Let us state our conjecture
 \begin{Conj}
 If we replace the condition "$u_0\in L^{\infty}(\tilde{\Omega})$" in Theorem \ref{maintheo}
 by the condition "$u_0$ has zero Lelong numbers" then there exists a unique function 
 $u\in C^{\infty}(\bar{\Omega}\times (0,T))$ satisfying \eqref{psh.cmdlc},
 \eqref{KRF.cmdlc}, \eqref{bien.cmdlc} such that $u(.,t)\rightarrow u_0$ in $L^1(\Omega)$.   
 \end{Conj}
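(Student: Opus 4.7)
The plan is to apply the Hou-Li theorem to a family of approximating problems. First I would regularize $u_0$ by mollifying $u_0+|z|^2/m$, producing $u_{0,m}\in C^\infty(\bar\Omega)$ that are strictly plurisubharmonic and decrease to $u_0$; since $u_0|_{\partial\Omega}=\varphi(\cdot,0)$ is continuous, the defect $\delta_m:=\sup_{\partial\Omega}(u_{0,m}-u_0)$ tends to zero. I would then build modified boundary data $\varphi_m$ that coincides with $\varphi$ on $\bar\Omega\times(\epsilon_m,T)$ for some $\epsilon_m\searrow 0$ and restores the Hou-Li compatibility condition at $t=0$ by adding a $t$-linear correction involving $g_m=-\log\det(u_{0,m})_{\alpha\bar\beta}+f(0,z,u_{0,m})$, with $\epsilon_m$ chosen so that $\epsilon_m\sup|g_m|\to 0$ and $\varphi_m\ge \varphi$ at the initial instant on $\partial\Omega$. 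Theorem \ref{houli} (note the subsolution in Section \ref{subsolution} always exists on a strictly pseudoconvex domain) then supplies smooth solutions $u_m$ of the approximating flow with data $(u_{0,m},\varphi_m)$.

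\textbf{Uniform estimates and passage to the limit.} Uniform $L^\infty$-bounds on $u_m$ follow from Corollary \ref{compa} applied to $u_1$ and $u_m$, so the hypotheses of Theorem \ref{Main} hold uniformly in $m$ on $\Omega\times(\epsilon/2,T')$. This gives a uniform $C^2$ estimate on $\bar\Omega\times(\epsilon,T')$ for any $0<\epsilon<T'<T$. Theorem \ref{main.C2alpha} then yields uniform $C^{2,\gamma}$ estimates for some $\gamma\in(0,1)$, and the bootstrap of Propositions \ref{Ckalpha.prop1}-\ref{Cinftyalpha} promotes these to uniform $C^{k,\alpha}$ estimates for every $k$. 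Arzelà-Ascoli plus a diagonal extraction produces $u\in C^\infty(\bar\Omega\times(0,T))$ satisfying \eqref{psh.dlchinh}, \eqref{KRF.dlchinh} and \eqref{bien.dlchinh}, which is moreover bounded by the uniform $L^\infty$ bound.

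\textbf{Initial convergence.} To prove \eqref{initial.dlchinh}, the idea is to bracket $u$ between explicit sub- and super-barriers. Pick a smooth negative strictly plurisubharmonic $\rho$ on $\bar\Omega$. For any $a>0$ there is $A=A(a)$ such that $v_m:=u_{0,m}+a\rho-At$ is a subsolution of the approximating equation and lies below $u_m$ on the parabolic boundary up to an error $\epsilon_m\sup|g_m|+\delta_m\to 0$. Corollary \ref{compa} then gives $u(z,t)\ge u_0(z)+a\rho(z)-At$; letting $t\to 0$ and then $a\to 0$ yields $\liminf_{t\to 0}u\ge u_0$. For the reverse inequality, for $m>k$ large the data satisfy $u_{0,m}\le u_{0,k}$ and $\varphi_m\le\varphi_k+2\epsilon$, so comparison yields $u_m\le u_k+2\epsilon$, hence $u\le u_k+2\epsilon$; taking $t\to 0$, then $k\to\infty$, then $\epsilon\to 0$ gives $\limsup_{t\to 0}u\le u_0$. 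Convergence in capacity then follows from the Hartogs lemma applied to the plurisubharmonic family $\{u(\cdot,t)\}$ once $u_0$ is approximated uniformly outside small capacity sets, and when $u_0$ is continuous the approximants $u_{0,m},\varphi_m$ converge uniformly so comparison gives $u_m\to u$ uniformly, whence $u\in C(\bar\Omega\times[0,T))$.

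\textbf{Uniqueness and main obstacle.} Given two solutions $u,v$, I would fix $\epsilon>0$ and show $u\le v+(t+3)\epsilon$. First, comparing $v(z,\cdot+1/m)$ with a shifted subsolution $v(z,1/m)+\epsilon\rho-At$ yields a uniform lower bound $v\ge u_0-\epsilon-At$, in particular $v$ is bounded so we may assume $\|f\|_{C^2}$ is bounded on the relevant range. The delicate step is to replace the loose pointwise initial condition by a genuine comparison at a small positive time: using $v(z,1/m)+\epsilon+A/m\ge u_0(z)=\lim u(z,t)$ and Hartogs' lemma one obtains, for every compact $K\Subset\Omega$, an index $k_{m,K}>m$ with $u(\cdot,1/k_{m,K})\le v(\cdot,1/m)+2\epsilon+A/m$ on $K$; the collar $\Omega\setminus K$ is controlled by comparing with a spatial harmonic extension $h$ of $\varphi$, allowing one to glue into a global bound $v(z,1/m)\ge u(z,1/k_m)-3\epsilon$ on $\Omega$. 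Finally, $u_m(z,t):=u(z,t+1/k_m)-\epsilon t$ is a strict subsolution of the equation satisfied by $v_m(z,t):=v(z,t+1/m)$, so Corollary \ref{compa} closes the argument and $\epsilon\to 0$ together with symmetry yield $u=v$. The hardest step is the Hartogs-based gluing: the lemma only gives convergence on compact subsets of $\Omega$, and compensating for the boundary layer without losing regularity of the comparison is the key technical point.
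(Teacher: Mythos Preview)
The statement you are attempting to prove is a \emph{Conjecture}; the paper does not prove it and explicitly lists it in Section~7 as an open problem, pointing to \cite{GZ13} and \cite{DL14} for the analogous results on compact K\"ahler manifolds. So there is no ``paper's own proof'' to compare against.

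Your proposal is not a proof of the conjecture: it is, almost verbatim, the proof of Theorem~\ref{maintheo} for \emph{bounded} $u_0$, and it never engages with the one hypothesis that changes. The whole a priori machinery of the paper is built on the constant $C_u=\sup|u(z,0)|$. If $u_0$ merely has zero Lelong numbers it may be unbounded below (e.g.\ $u_0\sim -(-\log|z-z_0|)^{1/2}$ near a point), so the mollifications $u_{0,m}\searrow u_0$ are not uniformly bounded and no uniform $C_u$ exists. Concretely: the subsolutions in Section~\ref{subsolution} use $Osc(u_0)$; the bound \eqref{sup u} and hence Proposition~\ref{t} depend on $C_u$; and every subsequent estimate in Sections~\ref{order1}--\ref{order2} cascades from these. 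Your sentence ``Uniform $L^\infty$-bounds on $u_m$ follow from Corollary~\ref{compa} applied to $u_1$ and $u_m$'' gives at best a uniform upper bound, not a lower one, so Theorem~\ref{Main} does not apply uniformly in $m$ and the compactness step collapses.

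The initial-convergence and uniqueness paragraphs have the same defect. The conjecture asks for $u(\cdot,t)\to u_0$ in $L^1(\Omega)$, not pointwise; your barrier $u_0+a\rho-At$ and your Hartogs argument reproduce the pointwise statement of Theorem~\ref{maintheo}, which is already stronger than needed when $u_0$ is bounded but says nothing new here. In the uniqueness step you write ``Step~1 implies that $v$ is bounded'', but $v\ge u_0-\epsilon-At$ gives boundedness of $v$ only if $u_0$ is bounded below, which is exactly what you no longer know. In short, the proposal does not identify, let alone overcome, the obstacle that makes this a conjecture rather than a corollary.
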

 
 The case where $u_0$ has positive Lelong numbers is another problem. It was also
 considered and solved  in the case compact K\"ahler manifold by \cite{GZ13} and
 \cite{DL14}. It is the motivation of the second direction: the case of domain in
 $\C^n$  and $u_0$ has positive Lelong numbers.
 
 There is another question: What is the behavior when we replace the condition 
 "$u_0\in PSH(\tilde{\Omega})$" in Theorem \ref{maintheo} by the condition 
 "$u_0\in PSH(\Omega)$"? In order to prove Theorem \ref{maintheo}, we construct  
 plurisubharmonic functions $u_{0,m}$ which converge to $u_0$.
  This step is easy if we suppose that
 $u_0\in PSH(\tilde{\Omega})$. If we only suppose that "$u_0\in PSH(\Omega)\mbox{ and } 
 \lim\limits_{z\rightarrow z_0\in\partial\Omega}u_0(z)=\varphi (z_0)$", maybe this step is 
 still realizable but more difficult. We give a provisional result in this direction. 
 \begin{Prop}
 Let $\Omega$ be a bounded smooth strictly pseudoconvex domain of $\C^n$ and $T\in (0,\infty ]$.
 Let $u_0$ be a continuous plurisubharmonic function on $\Omega$ such that $u_0$ is smooth on 
 $\bar{\Omega}\setminus \mathcal{K}$, where $\mathcal{K}\Subset\Omega$. Assume that $\varphi, 
 f$ are functions satisfying the conditions of Theorem \ref{maintheo}. Then there exists 
  a unique function 
 $u\in C^{\infty}(\bar{\Omega}\times (0,T))\cap C(\bar{\Omega}\times [0,T))$ satisfying 
 \eqref{psh.cmdlc},  \eqref{KRF.cmdlc}, \eqref{bien.cmdlc} 
 and $u(.,0)=u_0$.
 \end{Prop}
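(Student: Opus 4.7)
The plan is to run the proof of Theorem \ref{maintheo} almost verbatim, with one modification: replace the construction of $u_{0,m}$ (which in the main theorem used the global assumption $u_0\in PSH(\tilde\Omega)$) by a gluing construction that exploits the new hypothesis $u_0\in C^\infty(\bar\Omega\setminus\mathcal{K})$. Once the approximating sequence is in hand, the remaining arguments---Hou--Li existence (Theorem \ref{houli}), the a priori estimates of Theorem \ref{Main}, the $C^{2,\alpha}$ and higher regularity of Section \ref{C2a}, Ascoli compactness, and the comparison principle---transfer without change.

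For the approximation step, I fix open sets $\mathcal{K}\Subset V_1\Subset V_2\Subset\Omega$ and a cutoff $\chi\in C_c^\infty(V_2)$ with $\chi\equiv 1$ on $V_1$. For $m$ large enough, the standard mollification $u_0^{(m)}:=u_0*\rho_{1/m}$ is well defined, smooth, and plurisubharmonic on $V_2$, decreases pointwise to $u_0$, and, since $u_0\in C(\bar\Omega)$ is uniformly continuous, converges uniformly to $u_0$ on $\bar V_2$. I then set
\begin{equation*}
u_{0,m}:=u_0+\chi\bigl(u_0^{(m)}-u_0\bigr)+c_m|z|^2,
\end{equation*}
where $c_m\searrow 0^+$ is to be fixed. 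Because $u_0$ is smooth on $\bar\Omega\setminus\mathcal{K}$ (and in particular on $\mathrm{supp}(\nabla\chi)\subset V_2\setminus V_1$), the function $u_{0,m}$ is smooth on $\bar\Omega$. Expanding $dd^c u_{0,m}$, the only potentially indefinite contributions are the cross terms $d\chi\wedge d^c(u_0^{(m)}-u_0)+\mathrm{c.c.}+(u_0^{(m)}-u_0)\,dd^c\chi$, supported in the annular region $V_2\setminus V_1$ where $u_0$ is smooth; their $C^0$-norm is therefore controlled by $\|u_0^{(m)}-u_0\|_{C^1(V_2\setminus V_1)}\to 0$. Taking $c_m$ to decrease sufficiently slowly compared with this norm, I obtain $u_{0,m}$ smooth and strictly plurisubharmonic on $\bar\Omega$, and extendable as such to a small neighborhood of $\bar\Omega$. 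By construction $u_{0,m}\to u_0$ uniformly on $\bar\Omega$, while on $\partial\Omega$ one has $u_{0,m}=\varphi(\cdot,0)+c_m|z|^2\to\varphi(\cdot,0)$ uniformly.

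With $(u_{0,m})$ available, I will reuse the boundary data construction of Theorem \ref{maintheo}: define $g_m:=-\log\det (u_{0,m})_{\alpha\bar\beta}+f(0,z,u_{0,m})$, and $\varphi_m:=\zeta(t/\epsilon_m)(tg_m+u_{0,m})+(1-\zeta(t/\epsilon_m))\varphi$ with $\epsilon_m$ chosen so that $\epsilon_m\sup|g_m|\to 0$ and $\zeta(t/\epsilon_m)(u_{0,m}-\varphi)\geq 0$. The Hou--Li Theorem \ref{houli} yields smooth solutions $u_m$, uniformly bounded by Corollary \ref{compa}. Theorem \ref{Main} followed by Theorem \ref{main.C2alpha} and Proposition \ref{Cinftyalpha} provide uniform $C^{k,\alpha}$ bounds on $\bar\Omega\times(\epsilon,T')$, and Ascoli extracts a subsequential limit $u\in C^\infty(\bar\Omega\times(0,T))$ satisfying \eqref{psh.cmdlc}, \eqref{KRF.cmdlc}, \eqref{bien.cmdlc}. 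The uniform convergences $u_{0,m}\to u_0$ and $\varphi_m\to\varphi$, combined once more with Corollary \ref{compa}, will give uniform convergence $u_m\to u$ on $\bar\Omega\times[0,T)$, hence $u\in C(\bar\Omega\times[0,T))$ with $u(\cdot,0)=u_0$. Uniqueness is then handled by the same time-shift plus comparison argument as in the final part of the main theorem. The only place requiring genuine work is the gluing construction above; its success hinges entirely on the hypothesis $u_0\in C^\infty(\bar\Omega\setminus\mathcal{K})$, which furnishes the $C^1$-convergence of the mollification on $V_2\setminus V_1$ needed to tame the cutoff's cross terms. Without it one would have to face the significantly harder problem of extending a plurisubharmonic function with prescribed boundary trace across $\partial\Omega$, which is precisely the obstruction mentioned by the author just before the statement.
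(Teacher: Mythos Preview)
Your proposal is correct and follows essentially the same strategy as the paper: both construct smooth strictly plurisubharmonic approximants $u_{0,m}$ by gluing a mollification of $u_0$ near $\mathcal{K}$ with $u_0$ itself near $\partial\Omega$ via a cutoff, controlling the cutoff's cross terms using the smoothness of $u_0$ on the annular transition region, and then rerun the machinery of Theorem~\ref{maintheo}. The paper's specific gluing uses $\rho/m$ together with an auxiliary term $\frac{1}{m}\phi\circ(m\rho)$ in place of your $c_m|z|^2$, and defines $g_m$ and $\varphi_m$ directly in terms of $u_0+\tfrac{m+1}{m}\rho$ (multiplied by $(1-\psi)$, so that $g_m$ need only be defined on $\bar\Omega\setminus\mathcal{K}$), but these are technical variations of the same idea rather than a different approach.
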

 \begin{proof}[Proof sketch.]
 Let $\rho$, $\zeta$ be the functions defined in the proof of Theorem \ref{maintheo}. Let $\psi$ be a
 smooth function in $\Omega$ and $\phi$ be a smooth function on $\R$ satisfying
 \begin{itemize}
 \item $0\leq \psi\leq 1$, $\psi|_{U_1}=1$,$\psi|_{\Omega\setminus U_2}=0$, where 
 $\mathcal{K}\Subset U_1 \Subset U_2 \Subset \Omega$.\\
 \item $\phi$ is convex and increasing, 
 $\phi|_{(-\infty,-3)}=-2$, $\phi|_{(-1,\infty)}=Id$.\\
 \end{itemize}
  Using convolutions of $u_0+\frac{\rho}{m}$, we can find 
  $\tilde{u}_{0,m}\in C^{\infty}(U_2)$ such that $\tilde{u}_{0,m}$ and $\psi\tilde{u}_{0,m}+
  (1-\psi)(u_0+\frac{\rho}{m})$ are strictly plurisubharmonic functions.\\ 
  We define $u_{0,m}\in C^{\infty}(\bar{\Omega})$, $g_m\in 
  C^{\infty}(\bar{\Omega}\setminus\mathcal{K})$, 
  $\varphi_m\in C^{\infty}(\bar{\Omega}\times [0,T))$ by
  $$u_{0,m}=\psi\tilde{u}_{0,m}+(1-\psi)(u_0+\frac{\rho}{m})+\frac{1}{m}\phi\circ (m\rho),$$
  $$g_m=-\dot{\varphi}|_{t=0}+\log\det(u_0+
 \dfrac{m+1}{m}\rho)_{\alpha\bar{\beta}}+f(t,z,u_0+\dfrac{m+1}{m}\rho),$$
 $$\varphi_m=(1-\psi)(t\zeta(mt)g_m+u_0+\dfrac{m+1}{m}\rho+\int\limits_0^t\dot{\varphi}
 ).$$
 Repeating the techniques in the proof of Theorem \ref{maintheo}, we show that  there exists 
  a unique function 
 $u\in C^{\infty}(\bar{\Omega}\times (0,T))\cap C(\bar{\Omega}\times [0,T))$ satisfying 
 \eqref{psh.cmdlc},  \eqref{KRF.cmdlc}, \eqref{bien.cmdlc} 
 such that $u|_{t=0}=u_0$.\\
 \end{proof}
%references%

\end{document}